\definecolor{PineGreen}{rgb}{0.0,0.47,0.44}
\definecolor{MidnightBlue}{rgb}{0.1,0.1,0.44}
\definecolor{magenta}{rgb}{1.0,0.0,1.0}
\definecolor{bl1}{HTML}{4479A1}
\definecolor{pur1}{HTML}{52196D}
\definecolor{mag1}{HTML}{2AD0F1}
\definecolor{org1}{rgb}{.92,.39.21}
\definecolor{pur2}{rgb}{.53,.47,.7}
\newcommand{\spc}{\hspace*{0.25in}}
\newcommand{\eqnum}{\refstepcounter{equation}\textup{\tagform@{\theequation}}}
\newtheorem{theorem}{Theorem}
\numberwithin{theorem}{section}
\newtheorem{proposition}[theorem]{Proposition}
\newtheorem*{theorem*}{Theorem}
\newtheorem{lemma}[theorem]{Lemma}
\newtheorem{corollary}[theorem]{Corollary}
\theoremstyle{definition}
\newtheorem{definition}[theorem]{Definition}
\theoremstyle{remark}
\newtheorem{remark}[theorem]{Remark}
\newtheorem{example}[theorem]{Example}
\newcommand{\Con}{\mathbf{Con}}
\newcommand{\Gr}{\mathbf{Gr}}
\newcommand{\Ide}{\mathscr{I}}
\newcommand{\RR}{\mathbb{R}}
\newcommand{\PP}{\mathbb{P}}
\newcommand{\pp}{\mathbb{P}}
\newcommand{\CC}{\mathbb{C}}
\DeclareMathOperator{\codim}{codim}
\newcommand{\bF}{\mathbf{F}}
\newcommand{\bT}{\mathbf{T}}
\newcommand{\bB}{\mathbf{B}}
\newcommand{\cP}{\mathscr{P}}
\newcommand{\ass}{\text{\rm Assoc}}
\newcommand{\bV}{\mathbf{V}}
\newcommand{\fS}{\mathfrak{S}}
\newcommand{\set}[1]{{\left\{{#1}\right\}}}
\newcommand{\ip}[1]{\left\langle{#1}\right\rangle}
\DeclareRobustCommand
\newcommand{\surj}{\twoheadrightarrow}
\def\DD{D\kern-.7em\raise0.3ex\hbox{\char '55}\kern.33em}
\definecolor{Ftitle}{RGB}{11,46,108}
\colorlet{tableheadcolor}{Ftitle!25} 
\colorlet{tablerowcolor}{gray!10} 
\newcommand{\Pure}{\text{\rm Pure}}
\begin{document}
	
	\title{Conormal Spaces and Whitney Stratifications}
	\author{Martin Helmer} 
	\address[MH]{
		Department of Mathematics, North Carolina State University,
		Raleigh, NC, USA}\email{mhelmer@ncsu.edu}
	\author{Vidit Nanda}\address[VN]{Mathematical Institute,
		University of Oxford, Oxford, UK}\email{nanda@maths.ox.ac.uk}
	
	\maketitle

	\begin{abstract}
		We describe a new algorithm for computing Whitney stratifications of complex projective varieties. The main ingredients are (a) an algebraic criterion, due to L\^e and Teissier, which reformulates Whitney regularity in terms of conormal spaces and maps, and (b) a new interpretation of this conormal criterion via primary decomposition, which can be practically implemented on a computer. We show that this algorithm improves upon the existing state of the art by several orders of magnitude, even for relatively small input varieties. En route, we introduce related algorithms for efficiently stratifying affine varieties, flags on a given variety, and algebraic maps.\vspace{3mm}\newline
	{\bf Keywords:} Whitney stratification; conormal variety; primary decomposition. \newline \vspace{3mm}{\bf Mathematics Subject Classification: }14B05, 14Q20,  32S15, 32S60.\newline
	\end{abstract}

	\section*{Introduction}
	
	The quest to define and study singular spaces counts among the most spectacular success stories of twentieth century mathematics. Much of the underlying motivation arose from algebraic geometry, where the spaces of interest -- namely, the vanishing loci of polynomials -- contain singular points even in the simplest of cases. Without the benefit of hindsight, it remains a Herculean task to construct good models of singular spaces that are simultaneously broad enough to  include all analytic varieties and narrow enough to exclude various pathological spaces which arise as zero sets of arbitrary smooth functions. The standard solution to this conundrum, which we describe below, was first proposed by Whitney \cite{whitney1965tangents} and subsequently refined by Thom \cite{thom, thom2}, Mather \cite{Mather2012}, Goresky-MacPherson \cite{IH, IH2, SMTbook}, L\^{e}-Teissier \cite{le1988limites}, Fulton \cite{fulton2013intersection}, Cappell-Shaneson \cite{Cappell1991stratifiable}, Weinberger \cite{weinberger} and others. 
	
	\subsection*{Whitney's Condition (B)} As a natural starting point, one can at least require each candidate space $X$ under consideration to embed in some Euclidean space $\RR^n$ and to admit a partition into smooth submanifolds, say
	\[
	X = \coprod_i M_i,
	\] 
	with $\dim M_i = i$. An entirely reasonable first attempt at constructing such $M_i$ from $X$ might proceed as follows. For each dimension $0 \leq i \leq n$ and subset $Y \subset \RR^n$, let $\mathscr{E}_i(Y)$ denote the set of points $p$ in $Y$ which admit an open neighbourhood $U_p \subset Y$ homeomorphic to $\RR^i$. Then, recursively define
	\begin{align*}
	M_n &:= \mathscr{E}_n(X), \text{ and } \\
	M_i &:= \mathscr{E}_i(X-M_{>i}) \text{ for }0 \leq i < n,
	\end{align*}
	where $M_{>i}$ denotes the union $\bigcup_{j>i}M_j$. Unfortunately, this recursive strategy does not produce a desirable partition. Perhaps the simplest way to see the underlying problem is to try constructing these $M_i$ by hand when $X \subset \RR^3$ is the singular surface depicted in Figure \ref{fig:whitcusp}. 

\begin{figure}[h!]
	\begin{picture}(550,120)
\put(45,0){\includegraphics[width=.8\linewidth]{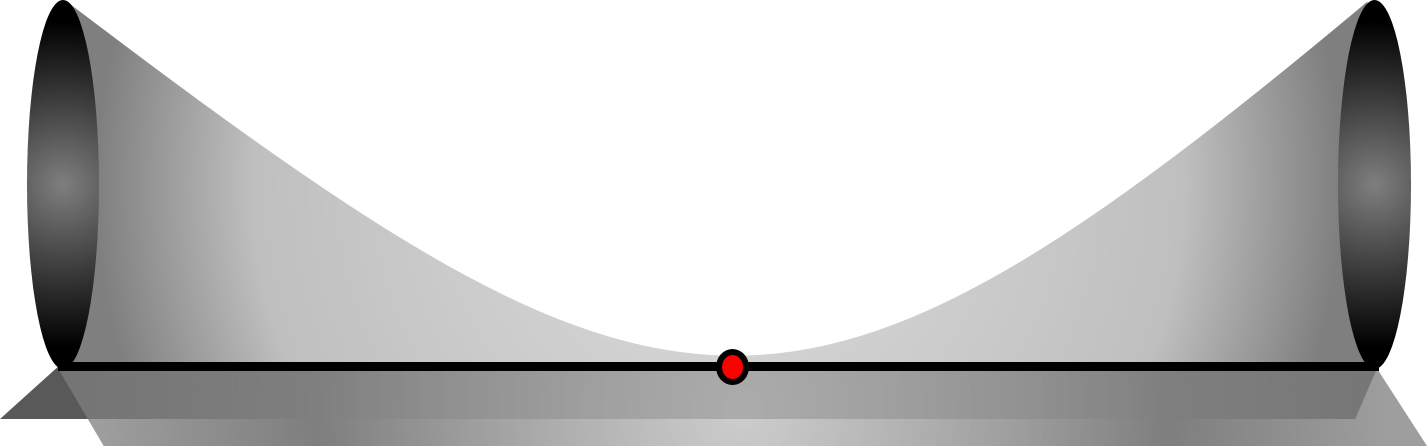}}
\put(235,31){$0$}
\end{picture}
\caption{The {\em Whitney cusp} depicted above is the hypersurface of $\RR^3$ given by $y^2+z^3-x^2z^2=0$. The entire $x$-axis, drawn horizontally, is singular.} \label{fig:whitcusp}
\end{figure}

	Since $M_3$ is empty, one identifies $M_2 \subset X$ as the set of points with two-dimensional Euclidean neighbourhoods; and upon removing these, only the $x$-axis remains. This axis must therefore equal $M_1$, and we obtain a partition of $X$ into one and two-dimensional smooth manifolds. The issue here is that the origin has a singularity type which is different from all other points lying on $M_1$ --- a small neighbourhood in $X$ around the origin is not homeomorphic to a small neighbourhood around any other point lying on the $x$-axis. More precisely, let $G$ be the group of homeomorphisms $f:X \to X$ so that $f$ is isotopic to the identity and its restriction to each $M_i$ is a diffeomorphism. It turns out that $G$ acts transitively on the two connected components of $M_1-\set{0}$ while fixing $0$ itself. Thus, our recursive strategy must be amended so that the $M_i$ are $G$-{\em equisingular} in this sense, which would automatically separate $0$ from $M_1$ into a separate stratum.
	
	Whitney's ingenious approach from \cite{whitney1965tangents} was to consider the behaviour of limiting tangent spaces as one approaches a point in some $Y := M_i$ in two different ways: one in a tangential direction along $Y$ itself, and another in a normal direction along some other $X := M_j$ for $j > i$. Let $\set{x_i}$ and $\set{y_i}$ be sequences of points in $X$ and $Y$ respectively which both converge to the same $y$ in $Y$. Write $T_i$ for the tangent space of $X$ at $x_i$ and $\ell_i$ for the secant line $[x_i,y_i]$ joining each $x_i$ to the corresponding $y_i$ in the ambient $\RR^n$. The pair $(X,Y)$ is said to satisfy Whitney's {\bf Condition (B)} if the limiting tangent space $T = \lim T_i$ contains the limiting secant line $\ell = \lim \ell_i$ whenever both limits exist. In our example, one can find sequences of points
	in $X := M_2$ and $Y := M_1$, both limiting to the problematic point $0$, for which $\ell \not\subset T$ --- these are illustrated in Figure \ref{fig:condB}.

\begin{figure}	
		\begin{picture}(550,130)
\put(45,0){\includegraphics[width=.8\linewidth]{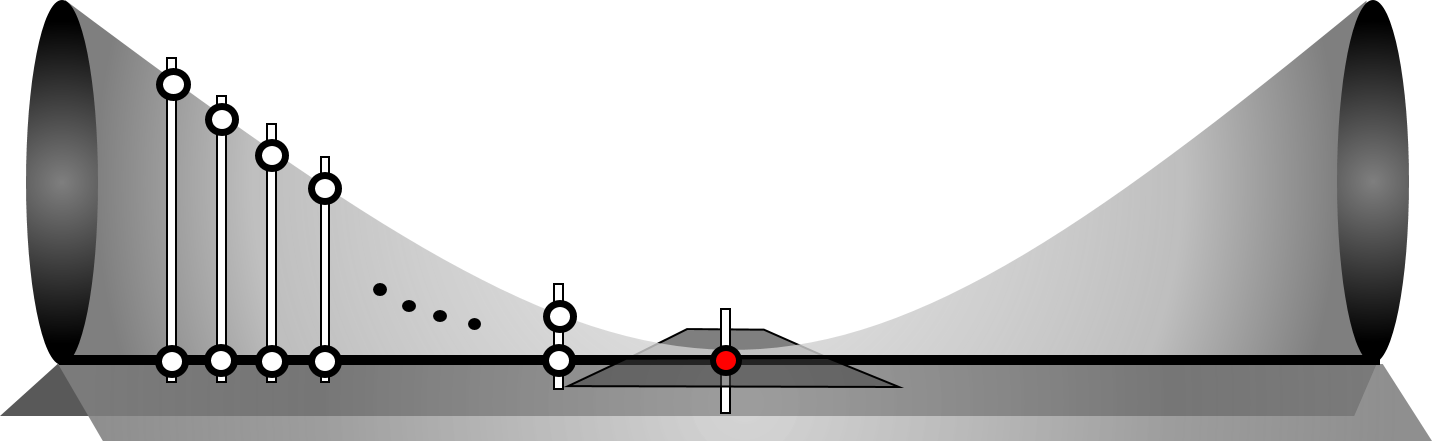}}
\put(231,41){$\ell$}
\put(90,-4){\vector(0,1){17}}
\put(40,60){\vector(1,0){45}}
\put(29,57){$\ell_i$}
\put(263,46){\vector(-1,-1){15}}
\put(263,46){$T$}
\put(88,-12){$y_i$}
\put(88,107){$x_i$}
\end{picture}\vspace{10mm}
\caption{Sequences of points $\set{x_i} \subset M_2$ and $\set{y_i} \subset M_1$ which both converge to the origin. The limiting tangent plane $T$ of the $x_i$'s is horizontal, while the limiting secant line $\ell$ of the $[x_i,y_i]$'s is vertical. Thus, Condition (B) would be violated if the origin was included in $M_1$.}
\label{fig:condB}
\end{figure}
	
	It is a foundational result in stratification theory that equisingularity is satisfied by any decomposition $X = \coprod_i M_i$ for which all pairs $(M_j,M_i)$ satisfy Whitney's Condition (B) --- see \cite{Mather2012} or \cite[Part I Ch 1.5]{SMTbook}. Since their very inception, such {\em Whitney stratifications} have been used to define and compute myriad important algebraic-topological invariants of singular spaces and related structures, even in cases where the invariants do not ultimately depend on the chosen stratification. Prominent examples include intersection homology groups \cite{IH, IH2}, stratified vector fields \cite{brasseletSeadeSuwa}, characteristic varieties \cite{Ginsburg1986}, Euler obstructions \cite{gonzalez1981obstruction,  rodriguez2017computing} and Chern classes \cite{MacPherson1974}, to name but a few.
	
	\subsection*{Conormal Spaces}
	
	Given a $k$-dimensional projective variety $X \subset \pp^n$, let $X_\text{reg}$ be  the smooth locus of $X$ and note that there is a well-defined tangent space $T_xX_\text{reg}$ at each point $x$ in $X_\text{reg}$. This tangent space naturally resides in the Grassmannian $\Gr(k,n+1)$ of $k$-dimensional subspaces of $\CC^{n+1}$; let us consider the map 
	\[
	\tau:X_\text{reg} \to X \times \Gr(k,n+1)
	\] that sends each $x$ to the pair $(x,T_xX_\text{reg})$. Thus, $\tau$'s image is the graph of the Gauss map of $X$; taking the closure of this image creates a new (usually singular) space ${\bf N}(X)$, called the {\bf Nash blowup} of $X$ (see \cite[Sec 16]{whitney1965tangents} or \cite{MacPherson1974}). The fiber over each point $x \in X$ of the evident projection map ${\bf N}(X) \surj X$ catalogues all the limiting tangent spaces at $x$; two such fibers are illustrated in Figure \ref{fig:nash}.
	
	\begin{figure}[h!]
		\includegraphics[width = .7\linewidth]{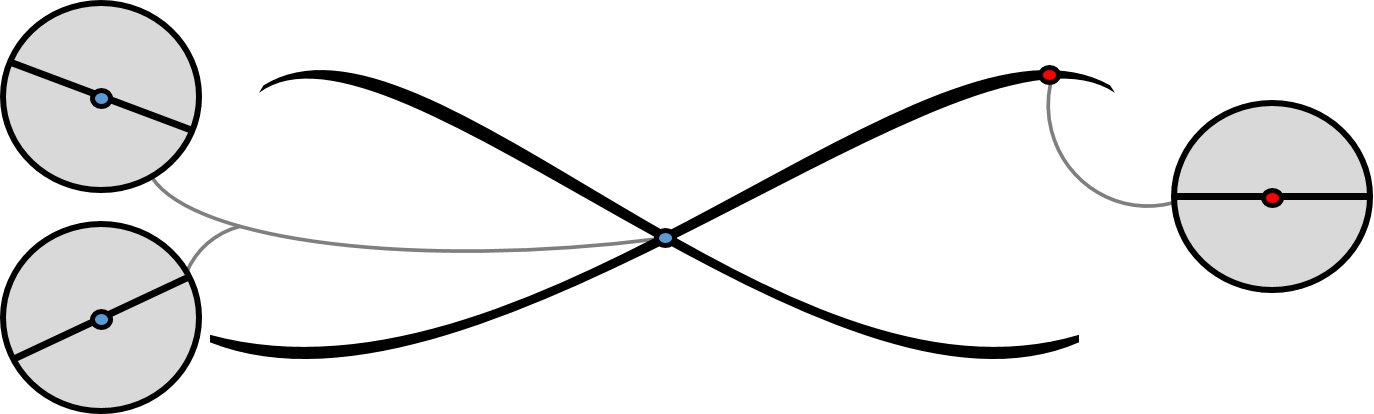}
		\caption{The fibers of the Nash blowup are depicted over a smooth (red) and singular (blue) point of an underlying curve.} \label{fig:nash}
	\end{figure}
	
	From the perspective of analysing limiting tangent spaces, the Nash blowup is an optimal object. However, the geometry of the Grassmannian is rather intricate, and this makes it difficult to explicitly compute defining equations of ${\bf N}(X)$ from those of $X$. One remedy is to systematically replace $\Gr(k,n+1)$ with the dual projective space $(\pp^n)^*$ in the construction described above. With this modification in place, we consider the set of all hyperplanes in $\pp^{n}$ -- or equivalently, points in the dual projective space $(\pp^n)^*$ -- that contain the tangent space at each $x$ in $X_\text{reg}$. Passing to the closure in $X \times (\pp^n)^*$ produces the {\bf conormal space} $\Con(X)$ of $X$, and the natural projection $\kappa_X:\Con(X) \surj X$ is called the {\bf conormal map}. The conormal space retains essential information about limiting tangents; and crucially, $\Con(X)$ is also a projective variety whose defining equations can be easily extracted from those of $X$.

	\subsection*{This Paper} Here we introduce an algorithm for building Whitney stratifications of complex projective varieties. The main reason for restricting our focus to such varieties rather than the far more general class of real semialgebraic sets (which are equally easy to represent on a computer) is that the ingredients required to implement our algorithm are only available in the complex algebraic setting. In any event, the immediate obstacle is that it appears hopeless to try verifying Condition (B) directly for a pair of smooth  quasiprojective varieties, since this would require computation of limiting tangent planes and secant lines over arbitrary pairs of infinite sequences. Attempting to bypass this problem by constructing the Nash blowup also appears to be a daunting task. Thus, we turn to the conormal space $\Con(X)$ of the given input variety $X$.
	
	The good news comes in the form of a result by L\^{e} and Teissier, which  provides a complete characterisation of Condition (B) for projective varieties in terms of their conormal spaces \cite[Proposition 1.3.8]{le1988limites}. This criterion asserts that for any projective variety $X \subset \pp^n$ and smooth quasiprojective subvariety $Y \subset X$, the pair $(X_\text{reg},Y)$ satisfies Condition (B) if and only if the associated primes of the integral closure of the ideal sheaf of $\kappa_X^{-1}(Y)$ are contained among the associated primes of the ideal sheaf of $$\Con(X) \cap \Con(Y)\cap \overline{\kappa_X^{-1}(Y)}.$$ There are now two caveats to consider --- first, as remarked above, we are not aware of any analogous criterion for pairs of smooth real (semi)algebraic sets. Second, and more serious from our perspective, is the fact that even the most basic algorithmic tasks involving integral closures are computationally prohibitive \cite[Sec 9.3]{vasconcelos2006integral}. 
	
	Our main result circumvents the latter issue by making use of a carefully chosen primary decomposition. 
	Let $R$ be the coordinate ring of $\pp^n \times (\pp^n)^*$, and consider a projective variety $X \subset \pp^n$ with conormal map $\kappa_X:\Con(X) \surj X$. For any subvariety $Y \subset X$, the ideal sheaf of $\kappa_X^{-1}(Y)$ constitutes an honest homogeneous ideal in $R$ and is given by $I_Y + I_{\Con(X)}$, and it therefore makes sense from an algorithmic perspective to perform a primary decomposition \cite[Chapter 4.8]{CLO} of this ideal. Given such a homogeneous ideal $I \lhd R$, we will denote the associated projective subvariety of $\pp^n \times (\pp^n)^*$ by $\bV(I)$. With this preamble in place, we now state our main theorem.\newpage
	
	\begin{theorem*}	Let $X\subset \pp^n$ be a pure dimensional projective variety and $Y$ a nonempty irreducible subvariety of its singular locus $X_{\rm sing}$. Consider a primary decomposition (of $R$-ideals)
$
{\Ide}[{\kappa_X^{-1}(Y)}]= \bigcap_{i=1}^s Q_i,
$
and let $\sigma \subset \set{1,2,\ldots,s}$ be the set of indices $i$ for which $\dim \kappa_X(\bV(Q_i)) < \dim Y$. Define 
\begin{align*}\label{eq:W}
A := \left[\bigcup_{i \in \sigma}\kappa_X(\bV(Q_i))\right] \cup Y_{\rm sing}.
\end{align*} 
Then the pair $(X_\text{\rm reg},Y-A)$ satisfies Condition (B). 
\end{theorem*}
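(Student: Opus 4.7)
The plan is to invoke the L\^e-Teissier conormal criterion on the pair $(X_{\rm reg}, Y-A)$. First, I would verify that $Y-A$ is smooth and quasi-projective, so that the criterion applies. Irreducibility of $Y$ forces $Y_{\rm sing}$ to be a proper closed subset of dimension strictly less than $\dim Y$, while each set $\kappa_X(\bV(Q_i))$ with $i\in\sigma$ has, by definition of $\sigma$, dimension strictly less than $\dim Y$. Thus $A$ is a finite union of proper closed subvarieties of $Y$, each of dimension strictly less than $\dim Y$, so $Y-A$ is a nonempty, smooth Zariski-open subvariety of $X$ and the criterion applies.

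Applying L\^e-Teissier reduces the problem to the associated-prime containment $\text{Ass}\bigl(\overline{\Ide[\kappa_X^{-1}(Y-A)]}\bigr) \subseteq \text{Ass}\bigl(\Ide[\Con(X)\cap\Con(Y-A)\cap\overline{\kappa_X^{-1}(Y-A)}]\bigr)$. The key observation is that restricting the given primary decomposition $\Ide[\kappa_X^{-1}(Y)] = \bigcap_i Q_i$ to the open set $\kappa_X^{-1}(Y-A)$ eliminates precisely the components indexed by $\sigma$: for each $i\in\sigma$ the support $\bV(Q_i)$ projects into $A$ under $\kappa_X$, so its restriction to $\kappa_X^{-1}(Y-A)$ is empty and the corresponding primary component drops out of the localized decomposition. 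The surviving components, indexed by $i\notin\sigma$, each dominate $Y-A$ under $\kappa_X$; for each such component, I would use the pure $(n-1)$-dimensionality of $\Con(X)$ together with a local argument at its generic point to identify it with a primary component of $\Ide[\Con(X)\cap\Con(Y-A)]$, thereby matching associated primes on the two sides of the criterion.

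The main obstacle I anticipate is the passage to integral closure. Even after restriction to $Y-A$ removes the $\sigma$-components, the integral closure $\overline{\Ide[\kappa_X^{-1}(Y-A)]}$ can in principle acquire embedded associated primes that are not visible in the ordinary primary decomposition of $\Ide[\kappa_X^{-1}(Y)]$. Controlling these will likely require a careful localization at the generic points of the dominating components $\bV(Q_i)$ and an appeal to the pure-dimensionality of $\Con(X) \cap \Con(Y-A)$ to rule out spurious embedded behaviour. An attractive alternative would be to argue that the surviving $Q_i$ with $i \notin \sigma$ coincide ideal-theoretically with the primary components of $\Ide[\Con(X)\cap\Con(Y-A)]$ at a generic point of $Y-A$; this would allow the associated-prime containment to be read off directly, bypassing any explicit computation of the integral closure.
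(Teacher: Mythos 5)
Your reduction to the L\^e--Teissier criterion and your first observation are sound, and they coincide with the opening move of the paper's proof: since $Y$ is irreducible, every $\kappa_X(\bV(Q_i))$ is either all of $Y$ or lies in $A$, so passing to $Y-A$ keeps exactly the primary components $Q_i$ with $i\notin\sigma$, i.e.\ $\Ide[\kappa_X^{-1}(Y-A)]=\bigcap_{i\notin\sigma}Q_i$. The problem is everything after that. The criterion you must verify bounds the associated primes of the \emph{integral closure} $\overline{\Ide}[\kappa_X^{-1}(Y-A)]$, and your proposed remedies (matching the surviving $Q_i$ with components of $\Ide[\Con(X)\cap\Con(Y-A)]$ at generic points, plus pure-dimensionality of $\Con(X)$) only control minimal primes and behaviour at generic points of $Y$. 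They cannot exclude embedded primes of the integral closure supported over proper subvarieties of $Y$ that are \emph{not} contained in $A$ --- and those are precisely where Condition (B) can fail. Indeed, if a generic-point argument of the kind you sketch were sufficient, one could omit the $\sigma$-components from $A$ altogether and conclude Condition (B) on $Y-Y_{\rm sing}$, which is false (for the Whitney cusp the failure occurs at a single point of $Y_{\rm reg}$, detected exactly by a primary component with lower-dimensional image). So the proposal, as written, defers the actual content of the theorem: that the failure locus is captured by the \emph{ordinary} primary decomposition.

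The paper closes this gap by never analysing the integral closure at all. It invokes Whitney's existence theorem to produce some proper closed $B\subsetneq Y$ containing $Y_{\rm sing}$ such that $(X_{\rm reg},Y-B)$ satisfies Condition (B); by Proposition \ref{prop:conormCond} the associated-prime containment then holds with $B$ (and with any larger proper subvariety). Enlarging $B$ to $C:=B\cup\bigcup_{i\in\rho_B-\rho_A}\kappa_X(\bV(Q_i))$, Condition (B) still holds on $Y-C$, while by construction $\Ide[\kappa_X^{-1}(Y-C)]=\Ide[\kappa_X^{-1}(Y-A)]=\bigcap_{i\notin\sigma}Q_i$; the criterion for $Y-C$ can therefore be transported back through Proposition \ref{prop:conormCond} to yield Condition (B) on $Y-A$. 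This use of the non-constructive existence of a Whitney stratification to certify the integral-closure side, combined with the ideal-theoretic identification that replaces the unknown bad set $B$ by the computable set $A$, is the key idea missing from your argument; without it (or a genuine substitute that controls embedded primes of $\overline{\Ide}[\kappa_X^{-1}(Y-A)]$ over special points of $Y-A$), the proof is incomplete.
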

	
	
	Computing a primary decomposition of an ideal reduces to standard Gr\"obner basis calculations \cite{decker1999primary,gianni1988grobner} and is widely implemented in computer algebra systems, so
	this result directly leads to our recursive algorithm for stratifying complex projective varieties. Before describing the details, we highlight three relevant features. First, given a $k$-dimensional input variety $X \subset \pp^n$, the output Whitney stratification is produced in the form of (defining equations for) a nested sequence $X_\bullet$ of subvarieties 
	\[
	X_0 \subset X_1 \subset \cdots \subset X_k = X,
	\]
	so that the desired manifold partition $X = \coprod_i M_i$ is given by $M_i := X_i - X_{i-1}$. Second, this algorithm can easily be used to produce Whitney stratifications of affine complex varieties as well --- first pass to the projective closure $X$, build its Whitney stratification $X_\bullet$, then dehomogenize the resulting $X_i$'s.  And third, given any flag $\bF_\bullet$ of projective subvarieties:
	\[
	\bF_0X \subset \bF_1X \subset \cdots \subset \bF_\ell X = X,
	\]
	our algorithm can be adapted so that its output is subordinate to this flag. In other words, we can guarantee that each connected component of a given $M_i$ lies in a single successive difference $\bF_jX - \bF_{j-1}X$ (see Section \ref{sec:flagstrat}). As a consequence, we are also able in Section \ref{sec:mapstrat} to stratify generic morphisms of projective varieties by availing of their Thom-Boardman flags \cite{boardman, thom}.
	
	\subsection*{Related Work} The current state of the art in this area appears to be the recent work of {\DH}inh and Jelonek \cite{dhinh2019thom}. To the best of our knowledge, given a complex algebraic variety $X$ embedded in $n$-dimensional affine or projective space, all prior stratification methods (such as \cite{mostowski1991complexity, rannou1998complexity} for instance) require quantifier elimination in approximately $4n$ real variables.  While critical points based algorithms for quantifier elimination, such as that of Grigoriev and Vorobjov \cite{grigor1988solving}, have existed in the  theoretical literature for many years these remain unimplemented (see the discussion in Subsection \ref{subsec:comparison_theory}). Therefore, quantifier elimination is generally accomplished using some version of the cylindrical algebraic decomposition algorithm of Collins \cite{arnon1984cylindrical,Basu2006Algorithms,caviness2012quantifier}, which is known to be extremely difficult in practice. For stratifications of real varieties given by unions of transversely-intersecting {\em smooth} subvarieties, the theoretical complexity of quantifier elimination can be somewhat improved \cite{vorobjov1992effective}. As far as we are aware, no implementation of any such quantifier elimination based Whitney stratification algorithms has ever been produced. 
	
	The work of {\DD}inh and Jelonek \cite{dhinh2019thom} improves upon such cylindrical algebraic decomposition (CAD) approaches by requiring only Gr\"obner basis (GB) computations in approximately $4n$ complex variables. The advantage enjoyed by GB methods over their CAD counterparts in practice has been well-documented \cite{england2016experience,wilson2012speeding}. In fact, it is remarked in \cite[III.D]{england2016experience} that 
	\begin{quote}
		{\em``Although like CAD
			the calculation of GB is doubly exponential in the worst case, GB computation is now mostly trivial for any problem on which CAD construction is tractable."}
	\end{quote} In fairness, it should be noted that cylindrical algebraic decompositions apply to a much wider class of singular spaces (i.e., real semialgebraic sets), where Gr\"obner basis methods are entirely unavailable.
	
	Our algorithm further reduces the Whitney stratification problem to Gr\"obner basis type computations in approximately $2n$ complex variables, and additionally, is able to preserve the sparsity structure of the input in ways that make a significant difference to real-world performance. By contrast, the algorithm of \cite{dhinh2019thom} requires various choices of generic linear forms, which end up removing a lot of sparsity from the systems being considered. As an added bonus our algorithm is deterministic, while the algorithm of \cite{dhinh2019thom} is probabilistic. We have implemented both algorithms, and provide a performance comparison in the final Section of this paper.

	\subsection*{Outline} In Sections \ref{sec:conormal} and \ref{sec:whitstrat}, we briefly review conormal spaces and Whitney stratifications respectively, with a view towards describing the L\^{e}-Teissier criterion for Whitney's Condition (B). Section \ref{sec:saturations}, which is focused on the primary decomposition approach to this criterion, forms the technical heart of this paper and contains a proof of our main result. We describe our recursive algorithm for constructing Whitney stratifications of complex projective varieties in Section \ref{sec:WSalgorithm} and verify its correctness. In Section \ref{sec:flagstrat} we modify this algorithm to produce flag-subordinate stratifications, which are then used to stratify projective morphisms in Section \ref{sec:mapstrat}. Finally, Section \ref{sec:runtimes} provides both complexity estimates and empirical evidence that the algorithm described here readily outperforms the existing state of the art.

{\footnotesize	
	\subsection*{Acknowledgements} VN is grateful to Mark Goresky and Robert MacPherson for several valuable discussions on the local geometry of singular spaces; his work was supported by EPSRC grant EP/R018472/1 and by DSTL grant D015 funded through the Alan Turing Institute. We thank the anonymous referee for their careful reading of the paper as well as their helpful comments and corrections.  We are particularly grateful  to the research group comprising Lihong Zhi, Nan Li, Zhihong Yang and Zijia Li for alerting us to an error in the published version of this paper \cite{hn}. They discovered that the accompanying software package did not perform correctly on the projective Whitney Cusp. This error has been corrected in this version of the paper and in the new version of the code. Finally, we would like to thank Bernard Teissier for alerting us to a mistake in our description of the integral closure of an ideal in an earlier version of this manuscript.}

	\section{Conormal Spaces}\label{sec:conormal}
	
	For each pair of positive integers $n$ and $k$ with $n+1 \geq k$, let $\Gr(k,n+1)$ denote the complex Grassmannian whose points correspond to all the  $k$-dimensional linear subspaces of $(n+1)$-dimensional affine space $\CC^{n+1}$. The usual projective space $\pp^n$ equals $\Gr(1,n+1)$ while its dual $(\pp^n)^*$ is $\Gr(n,n+1)$. Let $X \subset \pp^n$ be a connected $k$-dimensional complex analytic space whose regular and singular loci will be written $X_\text{reg}$ and $X_\text{sing} := (X - X_\text{reg})$ respectively. Recall that the tangent space to the smooth manifold $X_\text{reg}$ at a given point $x$ is a subspace $T_xX_\text{reg}$ in $\Gr(k,n+1)$.

	\begin{definition} \label{def:conormal} The {\bf conormal space} of $X$ is the subset of $\pp^n \times (\pp^n)^*$ determined by the closure
		\[
		\Con(X)=\overline{\set{(x,\xi) \mid x \in X_{\rm reg} \text{ and } T_xX_{\rm reg} \subset \xi}}.
		\]
		Thus, a point $(x,\xi)$ in $\pp^n \times (\pp^n)^*$ lies in $\Con(X)$ if and only if there exists a sequence of points $\set{x_i} \subset X_\text{reg}$ which converge to $x$ in $\pp^n$ and a sequence of hyperplanes $\set{\xi_i} \subset (\pp^n)^*$ which converge to $\xi$ in $(\pp^n)^*$ so that $T_{x_i}X_\text{reg} \subset \xi_i$ holds for all $i \gg 1$. 
	\end{definition}
	
	The map $\kappa_X:\Con(X) \to X$ induced by the evident projection $(x,\xi) \mapsto x$ is called the {\bf conormal map} of $X$; the fiber $\kappa_X^{-1}(x)$ of $\kappa_X$ over a point $x$ in $X$ is the set of all hyperplanes in $(\pp^n)^*$ which contain a limiting tangent space at $x$. Conormal spaces and maps are well-studied classical objects of substantial interest in complex geometry, with deep connections to polar varieties, Nash blowups, microlocal analysis and beyond \cite{FTpolar}. Here we will be interested exclusively in conormal maps of complex projective subvarieties $X \subset \pp^n$ --- in this special case, $\Con(X)$ is an $n$-dimensional complex subvariety of $X \times (\pp^n)^*$, the conormal map $\kappa_X$ is algebraic, and the dimension of each fiber $\kappa_X^{-1}(x)$ is no larger than $n$ (see \cite[Proposition 2.9]{FTpolar} and references therein).

	\begin{proposition}
		Let $X \subset \pp^n$ be a pure dimensional complex variety. If $X' \subset X$ is any Zariski-dense subset, then $\Con(X)=\Con(X')$. \label{prop:ConormalOfDense}
	\end{proposition}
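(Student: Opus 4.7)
The plan is to realize both $\Con(X)$ and $\Con(X')$ as Zariski closures of subsets of a common smooth incidence variety in $X_\text{reg} \times (\pp^n)^*$, and then to exploit its natural projective bundle structure over $X_\text{reg}$ to show that the smaller subset is already Zariski-dense in the larger one. First I would reduce to the case that $X$ is irreducible: if $X = X_1 \cup \cdots \cup X_r$ is the decomposition into irreducible components, Definition \ref{def:conormal} yields $\Con(X) = \bigcup_i \Con(X_i)$ and similarly for $X'$; writing $U_i := X \setminus \bigcup_{j \neq i} X_j$ (an open subset of $X$ whose intersection with $X_i$ is dense there), Zariski density of $X'$ in $X$ restricts to density of $X' \cap U_i$ in $U_i$, and hence to density of $X' \cap X_i$ in $X_i$. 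So we may assume $X$ is irreducible of dimension $k$.

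Set
\[
W_X := \set{(x, \xi) \in X_\text{reg} \times (\pp^n)^* \mid T_x X_\text{reg} \subset \xi}, \qquad W_{X'} := W_X \cap \pi^{-1}(X' \cap X_\text{reg}),
\]
where $\pi$ denotes projection onto the first factor. Unwinding Definition \ref{def:conormal}, taking Zariski closures in $\pp^n \times (\pp^n)^*$ yields $\Con(X) = \overline{W_X}$ and $\Con(X') = \overline{W_{X'}}$, so $\Con(X') \subseteq \Con(X)$ is immediate. For the reverse containment, the key observation is that $\pi : W_X \to X_\text{reg}$ is a Zariski-locally trivial projective bundle whose fiber over $x$ is the projective space of hyperplanes in $\pp^n$ containing $T_x X_\text{reg}$; consequently $W_X$ is smooth and irreducible. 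Because $X'$ is Zariski-dense in $X$ and $X_\text{reg}$ is open in $X$, the intersection $X' \cap X_\text{reg}$ is Zariski-dense in $X_\text{reg}$. Since $W_{X'}$ contains the entire $\pi$-fiber over each point of $X' \cap X_\text{reg}$, properness of $\pi$ forces $\pi(\overline{W_{X'}} \cap W_X)$ to be a closed subset of $X_\text{reg}$ containing the dense set $X' \cap X_\text{reg}$, so $\pi(\overline{W_{X'}} \cap W_X) = X_\text{reg}$; combined with the full $\pi$-fibers of $W_{X'}$ over $X' \cap X_\text{reg}$, this pushes $\dim(\overline{W_{X'}} \cap W_X)$ up to $\dim W_X$. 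Irreducibility of $W_X$ then gives $\overline{W_{X'}} \supseteq W_X$, and passing to closures in $\pp^n \times (\pp^n)^*$ delivers $\Con(X) = \overline{W_X} \subseteq \overline{W_{X'}} = \Con(X')$.

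The main point requiring care is that Zariski density of $X' \cap X_\text{reg}$ in $X_\text{reg}$ lifts to Zariski density of $W_{X'}$ in $W_X$. This would fail for an arbitrary surjective morphism, but it holds here precisely because $\pi$ is a locally trivial projective bundle with irreducible fibers, allowing the density to be detected dimensionally. With that principle in hand, the remaining steps are routine bookkeeping between closures taken in $W_X$ and in the ambient product.
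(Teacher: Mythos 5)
Your proposal is correct, but it takes a genuinely different route from the paper. The paper's proof is a two-line sequential argument in the classical topology: given $(x,\xi)\in\Con(X)$, density of $X'$ is used to produce points $x_i\in X'_{\rm reg}$ converging to $x$ together with hyperplanes $\xi_i\supset T_{x_i}X'_{\rm reg}$ converging to $\xi$, and one concludes because the conormal space is closed by definition. You instead argue algebraically: you reduce to $X$ irreducible, realize $\Con(X)$ and $\Con(X')$ as Zariski closures of $W_X$ and $W_{X'}=W_X\cap\pi^{-1}(X'\cap X_{\rm reg})$ inside the incidence variety (the projectivized conormal bundle over $X_{\rm reg}$), and use properness of $\pi$, fullness of the fibers over $X'\cap X_{\rm reg}$, and a dimension count in the irreducible total space $W_X$ to force $\overline{W_{X'}}\supseteq W_X$. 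What your approach buys is robustness with respect to the Zariski topology: a Zariski-dense subset need not be dense in the Euclidean topology, so the paper's limiting argument really addresses the intended use case (e.g.\ $X'$ open and dense), whereas your bundle-plus-dimension argument works verbatim for any Zariski-dense $X'$ and makes the density upstairs an explicit consequence of the fibration structure rather than an appeal to limits. What the paper's route buys is brevity, and it avoids your two mild glosses: (i) the identification $\Con(X')=\overline{W_{X'}}$ silently interprets $X'_{\rm reg}$ as $X'\cap X_{\rm reg}$ with $T_xX'_{\rm reg}=T_xX_{\rm reg}$ (the paper makes the same identification), and (ii) the step ``this pushes $\dim(\overline{W_{X'}}\cap W_X)$ up to $\dim W_X$'' should really pass through an irreducible component of $\overline{W_{X'}}\cap W_X$ that contains the full fibers over a Zariski-dense set of base points (such a component exists because each fiber is irreducible and there are finitely many components), after which upper semicontinuity of fiber dimension and irreducibility of $W_X$ finish the argument; both points are routine and do not affect correctness.
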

	\begin{proof}
		Since $X'$ is dense in the closed connected algebraic subspace $X \subset \pp^n$, for any point $(x,\xi)\in \Con(X)$ there exists a sequence of points $\{x_i\}$ in $X'_\text{reg}$ converging to $x$ and an induced sequence of hyperplanes $\xi_i$ containing $T_{x_i}X'_\text{reg}$ which converge to  $\xi$. Since $\Con(X)$ is closed by definition, we have $\Con(X)=\Con(X')$. 
	\end{proof}
	
	\section{Whitney Stratifications}\label{sec:whitstrat}
	
	Let $X, Y \subset \CC^{n+1}$ be smooth complex manifolds with $\dim(Y)<\dim(X)$. A point $p$ in $Y$ is said to satisfy Whitney's {\bf Condition (B)} with respect to $X$ if the following property \cite[Sec 19]{whitney1965tangents} holds:
	\begin{quote} for any sequences of points $\set{x_i} \subset X$ and $\set{y_i} \subset Y$ both converging to $p$, if the secant lines $\ell_i = [x_i,y_i]$ converge to some limiting line $\ell$ in $\pp^n$ and if the tangent spaces $T_{x_i}X$ converge to some limiting plane $T$ in the Grassmannian $\Gr(\dim X,n+1)$, then $\ell \subset T$.
	\end{quote}
	More generally, we say that the pair $(X,Y)$ satisfies Condition (B) if the above property holds for every point $p$ in $Y$. Note that $(X,Y)$ vacuously satisfies Condition (B) if the closures $\overline{X}$ and $\overline{Y}$ do not intersect in $\CC^{n+1}$. The result below is also elementary, and we have only highlighted it here since we appeal to it rather frequently.
	\begin{proposition} \label{prop:condBdense}
		Assume that a pair $(X,Y)$ of smooth complex manifolds satisfies Condition (B). If $X' \subset X$ is a dense submanifold of $X$ and $Y' \subset Y$ an arbitrary submanifold of $Y$, then $(X',Y')$ also satisfies Condition (B).
	\end{proposition}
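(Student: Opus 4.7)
The plan is to reduce Condition (B) for $(X',Y')$ directly to Condition (B) for $(X,Y)$ by tracking sequences and observing that the relevant tangent and secant data agree.

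First, I would record an elementary dimension observation: since $X'$ is assumed to be a dense submanifold of the smooth manifold $X$, the only way a submanifold can be topologically dense in an ambient manifold is for it to have the same dimension, i.e.\ $\dim X' = \dim X$. Consequently $X' \subset X$ is an open submanifold, and for every $x \in X'$ the tangent space $T_xX'$ coincides with $T_xX$ (viewed as a point of the Grassmannian $\Gr(\dim X, n+1)$).

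Next, fix an arbitrary point $p \in Y'$ and let $\{x_i\} \subset X'$ and $\{y_i\} \subset Y'$ be sequences both converging to $p$ in $\CC^{n+1}$, with the secant lines $\ell_i = [x_i,y_i]$ converging to some $\ell \in \pp^n$ and the tangent spaces $T_{x_i}X'$ converging to some $T \in \Gr(\dim X, n+1)$. Since $Y' \subset Y$ and $X' \subset X$, the sequences $\{x_i\}$ and $\{y_i\}$ also lie in $X$ and $Y$ respectively, and they both converge to the point $p \in Y$. By the tangent space identification from the previous paragraph, $T_{x_i}X = T_{x_i}X'$ for every $i$, so the limit $\lim_i T_{x_i}X$ exists and equals $T$; the secant lines $[x_i,y_i]$ are unchanged and still converge to $\ell$.

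Finally, I would invoke Condition (B) for $(X,Y)$ at the point $p \in Y$ with these sequences to conclude $\ell \subset T$, which is exactly the content of Condition (B) for $(X',Y')$ at $p$. Since $p \in Y'$ was arbitrary, the pair $(X',Y')$ satisfies Condition (B). There is no serious obstacle here; the only subtle point worth flagging explicitly is the dimension identification $\dim X' = \dim X$ that legitimises the replacement of $T_{x_i}X'$ by $T_{x_i}X$, without which the argument would not go through.
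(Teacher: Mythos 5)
Your proof is correct and follows essentially the same route as the paper: view the sequences in $X'\times Y'$ as sequences in $X\times Y$, note that density forces $T_{x_i}X'=T_{x_i}X$, and invoke Condition (B) for $(X,Y)$. The only difference is that you spell out why density gives $\dim X'=\dim X$ and hence the tangent-space equality, a point the paper's proof simply asserts.
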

	\begin{proof}
		Since $X' \subset X$ and $Y' \subset Y$, every sequence $(\set{x_i},\set{y_i}) \subset X' \times Y'$ is automatically a sequence in $X \times Y$. And since $X'$ is dense in $X$, we have an equality $T_{x_i}X' = T_{x_i}X$ of tangent planes in the appropriate Grassmannian. Thus, $(X',Y')$ satisfies Condition (B) because $(X,Y)$ does.
	\end{proof}
	
	Condition (B) serves as a regularity axiom which can be used to induce a particularly well-behaved and useful class of decompositions of analytic spaces into submanifolds.
	
	\begin{definition}\label{def:whitstrat}
		A ($k$-dimensional) {\bf Whitney stratification} of a complex analytic subspace $W \subset \CC^{n+1}$ is a filtration $W_\bullet$ by closed subsets
		\[
		\varnothing = W_{-1} \subset W_0 \subset W_1 \subset \cdots \subset W_{k-1} \subset W_k = W,
		\]
		where each difference $M_i := W_i-W_{i-1}$ is a complex analytic $i$-dimensional manifold subject to the following conditions. The connected components of $M_i$, called the $i$-dimensional {\bf strata}, must obey the following axioms:
		\begin{enumerate} 
			\item {\bf local finiteness:} every point $p$ in $W$ admits an open neighbourhood which intersects only finitely many strata; 
			\item {\bf frontier:} for each stratum $S \subset X$, the difference $(\overline{S}-S)$ is a finite union of lower-dimensional strata; and finally,
			\item {\bf condition (B):} each pair of strata (regardless of  dimension) satisfies Condition (B). 
		\end{enumerate}
	\end{definition}
	
	Whitney showed in \cite[Section 19]{whitney1965tangents} that every variety $X$ admits a Whitney stratification $X_\bullet$ for which each constituent $X_i \subset X$ is a subvariety. His original definition from that paper contains an additional Condition (A), which is now known by Mather's work to be superfluous \cite[Proposition 2.4]{Mather2012}. In fact, even the frontier axiom follows from Condition (B) for  locally finite stratifications \cite[Corollary 10.5]{Mather2012}, so Condition (B) will remain our sole focus in this paper. The inherent difficulty here is that verifying Condition (B) involves testing the behavior of limiting tangent spaces and secant lines over arbitrary families of infinite sequences. 
 
 Fortunately, there is a beautiful alternate characterization in terms of conormal maps due to L\^e and Teissier \cite[Proposition 1.3.8]{le1988limites}, which we describe below. The following notation has been employed in its statement: for each closed subscheme $Z$ of $\pp^n$ (or of $\pp^n\times (\pp^n)^*$), the defining sheaf of ideals is written $\Ide[Z]$; note that $\Ide[Z]$ is simply an ideal in the appropriate coordinate ring $R$, however it may fail to be radical. In contrast, the radical ideal associated to a variety $V$ in either of our ambient spaces will be written $I_V$. We also recall that for a polynomial ideal $I \lhd R$ with primary decomposition $I=Q_1 \cap \cdots \cap Q_r$, the set $\ass(I)$ of $I$'s {\bf associated primes}  consists of prime ideals given by taking the radical of each primary component $\{ \sqrt{Q_1}, \dots,\sqrt{Q_r} \}$. 

 \begin{lemma} \label{lemma:directTranslation} Let $X \subset \pp^n$ be a projective variety with conormal map $\kappa_X:\Con(X) \surj X$, and consider a subvariety  $Y$ of $X$ satisfying $Y \subset X_\text{\rm sing}$. Let $I_p$ be the maximal ideal of a point $p\in Y_{\rm reg}$ considered in the coordinate ring of $\pp^n\times (\pp^n)^*$. 
Then Condition (B) holds for the pair $(X_\text{\rm reg},Y_\text{\rm reg})$ at $p$ if and only if we have a containment 
\[
\ass\left(\Ide[\Con(X)\cap \Con(Y)]+I_p\right)\supset \ass\left( \overline{\Ide}[\kappa_X^{-1}(Y)]+I_p\right)
\]
of associated primes.
\end{lemma}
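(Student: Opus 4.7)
The plan is to deduce this lemma by localizing the L\^{e}-Teissier criterion \cite[Proposition 1.3.8]{le1988limites} at the point $p$. In its global form, applied to the smooth subvariety $Y_\text{reg}$ of $X$ and combined with Proposition \ref{prop:ConormalOfDense} (which lets one replace $\Con(Y_\text{reg})$ and closures over $Y_\text{reg}$ by their counterparts for $Y$), that criterion asserts that $(X_\text{reg}, Y_\text{reg})$ satisfies Condition (B) everywhere if and only if
\[
\ass\bigl(\overline{\Ide}[\kappa_X^{-1}(Y)]\bigr) \subset \ass\bigl(\Ide[\Con(X)\cap\Con(Y)\cap\overline{\kappa_X^{-1}(Y)}]\bigr).
\]

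The first step of my plan is to simplify the right-hand side. Because $\Con(Y)\subset Y\times(\pp^n)^*$, the ideal $\Ide[\Con(Y)]$ contains the pullback of $I_Y$, while the ideal of $\kappa_X^{-1}(Y)=\Con(X)\cap(Y\times(\pp^n)^*)$ equals $I_Y+\Ide[\Con(X)]$. Combining these, the triple intersection $\Con(X)\cap\Con(Y)\cap\overline{\kappa_X^{-1}(Y)}$ agrees scheme-theoretically with $\Con(X)\cap\Con(Y)$, so the closure factor on the right-hand side can be dropped without altering any associated primes.

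The second step is to pass from the global statement to the pointwise one at $p$ by adjoining the maximal ideal $I_p$ to both sides. Since Condition (B) at $p$ depends only on the germ of the situation there, namely on sequences $\{x_i\},\{y_i\}$ converging to $p$, the correct algebraic localization is exactly the addition of $I_p$: the associated primes surviving this operation are exactly those whose vanishing loci pass through $p$, and these record precisely the limiting tangent-secant data that Condition (B) probes. I would further use Proposition \ref{prop:condBdense} (or its straightforward analogue) to ensure that the pointwise Condition (B) statement for $(X_\text{reg}, Y_\text{reg})$ at $p$ is faithfully captured by this localized containment.

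The main obstacle lies in this last step: one must verify that adjoining $I_p$ is compatible with the integral-closure operation on the left-hand side, so that the associated primes of $\overline{\Ide}[\kappa_X^{-1}(Y)]+I_p$ genuinely reflect the local behaviour at $p$ of the components of the integral closure. Equivalently, components of $\overline{\Ide}[\kappa_X^{-1}(Y)]$ meeting the fiber $\kappa_X^{-1}(p)$ should carry exactly the data that detects a potential failure of Condition (B) at $p$, while components disjoint from this fiber should be absorbed into the unit ideal after adjunction. This is what makes ``$\Leftrightarrow$'' at the level of associated primes a legitimate pointwise reformulation of the L\^{e}-Teissier containment.
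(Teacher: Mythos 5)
This lemma is not proved in the paper at all: it is presented as a direct translation, into the paper's notation, of the local criterion of L\^e and Teissier \cite[Proposition 1.3.8]{le1988limites}, and the only honest route to it is to engage with that cited result. Your argument instead takes as its starting point the \emph{global} form of the criterion (the version quoted in the paper's introduction) and tries to localize it at $p$ by adjoining $I_p$. Within the paper's logical structure this is circular: the global statement has no independent justification there --- it is merely an informal summary of the same L\^e--Teissier proposition --- and the actual direction of deduction runs the other way, from the pointwise Lemma \ref{lemma:directTranslation} to the statement over all of $Y-A$ in Proposition \ref{prop:conormCond}. So you are deriving the lemma from a statement whose only source is the lemma itself; nothing in your sketch touches the real content of the L\^e--Teissier criterion (integral dependence of the ideal of $\kappa_X^{-1}(Y)$ tested along the fiber over $p$).

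Moreover, the localization step, which you yourself flag as ``the main obstacle,'' is a genuine gap rather than a routine verification. Adding $I_p$ to an ideal $I$ does not simply discard the components of $I$ whose zero loci miss $p$: the associated primes of $I+I_p$ are primes supported on the fiber $\set{p}\times(\pp^n)^*$ and need not be related to $\ass(I)$ in the way you assert (already for $I=(x^2)$ and $I_p=(x,y)$ one has $\ass(I+I_p)=\set{(x,y)}$, which is not an associated prime of $I$). So the claim that ``the associated primes surviving this operation are exactly those whose vanishing loci pass through $p$'' is false as stated, and the compatibility of the integral closure $\overline{\Ide}[\kappa_X^{-1}(Y)]$ with the operation $+\,I_p$ --- exactly the point where the pointwise criterion lives --- is left unproved. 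The preliminary simplification (dropping the factor $\overline{\kappa_X^{-1}(Y)}$) is harmless when $Y$ is closed, but it does not address either of these issues, so the proposed derivation does not establish the lemma.
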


If $A \subset Y$ is a closed proper subvariety that contains $Y_\text{sing}$ but not the point $p$, then the above criterion for the pair $(X_\text{\rm reg},Y_\text{\rm reg})$ at $p$ gives an identical criterion for the pair $(X_\text{\rm reg},Y-A)$ at $p$. Our goal here is to derive from this local result a global algorithmic criterion for checking whether or not $(X_\text{reg},Y-A)$ satisfies Condition (B) at {\em all} points $y \in Y-A$. In the statement below, ${\Ide}[\kappa_X^{-1}(Y-A)]$, respectively $\overline{\Ide}[\kappa_X^{-1}(Y-A)]$, denotes the intersection of all primary components of ${\Ide}[\kappa_X^{-1}(Y)]$, respectively $\overline{\Ide}[\kappa_X^{-1}(Y)]$, which are not supported on $\kappa_X^{-1}(A)$.

\begin{proposition}\label{prop:conormCond}
Let $X, Y$ be as defined in the statement of Lemma \ref{lemma:directTranslation} and let $A\subsetneq Y$ with $Y_{\rm sing }\subset A$. Then Condition (B) holds for all points in the pair $(X_{\rm reg}, Y-A)$ if and only if we have the containment
\begin{equation}
\ass\left(\Ide[\Con(X)\cap \Con(Y)] +  \Ide[\kappa_X^{-1}(Y-A)] \right) \supset \ass\left(\overline{\Ide}[\kappa_X^{-1}(Y-A)]\right)
\label{eq:lemmaOpenAssPrime}
\end{equation}
\label{lemma:correctedVersion}
of associated primes.
	\end{proposition}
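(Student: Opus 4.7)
My plan is to translate the single global containment (\ref{eq:lemmaOpenAssPrime}) into the pointwise ideal-theoretic criterion of Lemma~\ref{lemma:directTranslation}, applied at each $p \in Y-A$. As a preliminary step, since $A$ is closed in $Y$ and $Y_{\rm sing} \subset A$, the complement $Y-A$ is an open subset of $Y_{\rm reg}$; hence at every $p \in Y-A$ Condition~(B) for the pair $(X_{\rm reg}, Y-A)$ agrees with Condition~(B) for $(X_{\rm reg}, Y_{\rm reg})$ at $p$, because any sequence in $Y_{\rm reg}$ converging to $p$ eventually enters $Y-A$. Lemma~\ref{lemma:directTranslation} then reformulates Condition~(B) as the family of local containments
\[
\ass\bigl(\Ide[\Con(X)\cap\Con(Y)] + I_p\bigr) \;\supset\; \ass\bigl(\overline{\Ide}[\kappa_X^{-1}(Y)] + I_p\bigr), \qquad p \in Y-A,
\]
and the substance of the proof becomes matching this infinite family of local containments against the single global containment (\ref{eq:lemmaOpenAssPrime}).

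The bridge between local and global is the following dictionary. The ideal $I_p$ cuts out the fiber $\{p\}\times(\pp^n)^*$, and for a generic point $p$ in the image $\kappa_X(V(P))$ of an associated prime $P$ of an ideal $J \lhd R$, the primary decomposition of $J+I_p$ retains precisely those components of $J$ whose varieties project to a subvariety of $\pp^n$ containing $p$. Two consequences follow. First, primary components of $\Ide[\kappa_X^{-1}(Y)]$ and of its integral closure that are supported over $A$ contribute nothing to $\ass(J+I_p)$ for any $p \in Y-A$, since their images under $\kappa_X$ miss $p$; hence the local criterion is unchanged when these ideals are replaced by their $(Y-A)$-restricted analogues $\Ide[\kappa_X^{-1}(Y-A)]$ and $\overline{\Ide}[\kappa_X^{-1}(Y-A)]$. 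Second, the lists of associated primes on either side of (\ref{eq:lemmaOpenAssPrime}) are assembled from the lists of local associated primes by sweeping $p$ through generic points of $\kappa_X(V(P))\cap(Y-A)$ for each relevant $P$. Both directions of the equivalence then follow: given the global containment, fix $p \in Y-A$ and match each local associated prime on the right to one on the left via the dictionary; conversely, for any $P$ in the right-hand side of (\ref{eq:lemmaOpenAssPrime}), pick a generic $p \in \kappa_X(V(P))\cap(Y-A)$ and lift the resulting local containment back to a global one.

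The principal obstacle is making the dictionary precise: adjoining $I_p$ to a homogeneous ideal $J$ can, in principle, introduce spurious embedded primes or collapse distinct minimal primes. The remedy is to choose $p$ generically in $\kappa_X(V(P))$ for each $P$ under consideration, so that $V(P)\cap(\{p\}\times(\pp^n)^*)$ is reduced and equidimensional and the associated primes of $J+I_p$ correspond cleanly to the associated primes of $J$ whose image contains $p$. Once this transversality is established via a brief localisation argument at $p$, the proposition follows by running Lemma~\ref{lemma:directTranslation} in parallel over all $p \in Y-A$.
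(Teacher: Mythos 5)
Your overall skeleton is the same as the paper's: apply Lemma~\ref{lemma:directTranslation} pointwise on $Y-A$ and pass between the pointwise containments and the single containment \eqref{eq:lemmaOpenAssPrime} by noting that primary components supported over $\kappa_X^{-1}(A)$ are invisible at points $p\notin A$. The trouble is that the entire local--global passage is delegated to your ``dictionary'' --- that $\ass(J+I_p)$ consists exactly of (primes arising from) those components of $J$ whose image under $\kappa_X$ contains $p$ --- which you acknowledge as the principal obstacle but never prove, and which is not true as stated: adjoining $I_p$ can produce associated primes (typically embedded ones along the fiber over $p$) that do not correspond to any retained component of $J$. Your proposed remedy, choosing $p$ generically, only helps in the direction where you get to choose $p$. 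In the implication ``\eqref{eq:lemmaOpenAssPrime} $\Rightarrow$ Condition (B) at every $p\in Y-A$'' the point $p$ is arbitrary, so the generic-$p$ dictionary cannot be invoked there; this direction is exactly where the paper instead uses the ideal identity $\Ide[\kappa_X^{-1}(Y-A)]+I_p=\Ide[\kappa_X^{-1}(Y)]+I_p=I_{\Con(X)}+I_p$, valid at \emph{every} $p\in Y-A$ because $I_p\supset I_Y$ and the discarded components lie over $A$; with it, adding $I_p$ to the left-hand ideal of \eqref{eq:lemmaOpenAssPrime} literally reproduces the left-hand ideal of the pointwise criterion, so no structural claim about $\ass(J+I_p)$ is needed on that side.

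In the other direction (Condition (B) everywhere $\Rightarrow$ \eqref{eq:lemmaOpenAssPrime}), your plan --- for each $P\in\ass(\overline{\Ide}[\kappa_X^{-1}(Y-A)])$ pick a generic $p\in\kappa_X(\bV(P))\cap(Y-A)$ and ``lift the resulting local containment back to a global one'' --- leaves the lift unjustified: from knowing that some associated prime of $\Ide[\Con(X)\cap\Con(Y)]+I_p$ contains $P+I_p$, you must conclude that $P$ itself is an associated prime of $\Ide[\Con(X)\cap\Con(Y)]+\Ide[\kappa_X^{-1}(Y-A)]$, which is again a dictionary-type statement, now for the left-hand ideal, and again unproven (one must rule out that the local prime comes from a different global component whose fiber over $p$ agrees with that of $\bV(P)$). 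So as written the proposal reduces the proposition to an unproved, and in its stated generality false, commutative-algebra lemma. To repair it, either prove a correct version of the dictionary restricted to the two places it is actually used, or follow the paper's route: handle the left side by the displayed ideal identity, and argue the forward direction by observing that the pointwise condition holds along all of $\kappa_X^{-1}(Y-A)$ and then passing to Zariski closures.
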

\begin{proof}
Set $Y'=Y-A$. We know from Lemma \ref{lemma:directTranslation} that Condition (B) holds for $(X_\text{reg},Y')$ at a point $p\in Y'$ if and only if we have the containment 
\begin{align}\label{eq:assprimep}
\ass\left(\Ide[\Con(X)\cap \Con(Y)]+I_p\right) \supset \ass\left(\overline{\Ide}[\kappa_X^{-1}(Y)]+I_p\right).
\end{align} We now claim that requiring such a containment for each $p$ in $Y'$, is equivalent to the containment in \eqref{eq:lemmaOpenAssPrime}. 

 Note that, since $p$ is not in $A$ by assumption, we have that $\kappa_A^{-1}(p)$ is not contained in any primary components supported on $\kappa^{-1}(A)$ and hence, when summing with $I_p$, we can replace $\Ide[\kappa_X^{-1}(Y-A)]$ on the left side of \eqref{eq:lemmaOpenAssPrime} by $\Ide[\kappa_X^{-1}(Y)]$. Similarly, the integral closure on the right side may as well be replaced by $\overline{\Ide}[\kappa_X^{-1}(Y)]$ when summing with $I_p$.

 Now suppose \eqref{eq:assprimep} holds for every $p\in Y'$. Recall that $\Ide[\kappa_X^{-1}(Y-A)]$ is the ideal sheaf of the scheme $\overline{\kappa_X^{-1}(Y-A)}$; thus, if we have containment \eqref{eq:assprimep} for every $p\in Y'$, this verifies that the desired condition holds everywhere in $\kappa_X^{-1}(Y-A)$, and thus we automatically have the corresponding containment for the Zariski closures, hence 
we obtain \eqref{eq:lemmaOpenAssPrime}. 

Conversely, suppose that \eqref{eq:lemmaOpenAssPrime} holds and consider any $p\notin A$. All associated primes of $\overline{\Ide}[\kappa_X^{-1}(Y-A)]+I_p$ arise from summing $I_p$ with associated primes of $\overline{\Ide}[\kappa_X^{-1}(Y-A)]$. Since all these primes are also contained in the left hand set, and since we have
\[
\Ide[\kappa_X^{-1}(Y-A)] +I_p=\Ide[\kappa_X^{-1}(Y)]+I_p=I_{\Con(X)}+I_p
\] we obtain the desired containment \eqref{eq:assprimep}. 
\end{proof}

We recall for the reader's convenience that the integral closure $\overline{I}$ of an ideal $I$ in a commutative ring $R$ is defined as follows. Let $I^i$ denote the $i$-th power of $I$, i.e., the set of all $a \in R$ which can be expressed as the product of $i$ elements of $I$. Now $\overline{I}$ consists of all $r$ in $R$ for which there exists some integer $k > 0$ and elements $a_i\in I^i$ satisfying
\[
r^k + a_{k-1}\cdot r^{k-1} + \cdots + a_1 \cdot r + a_0=0.
\] 
In general, if we are only given access to a set of generating elements for $I$, then various algorithmic operations involving $\overline{I}$ become computationally prohibitive even in the relatively benign case $R = \CC[x_0,\ldots,x_n]$. These hard tasks include, for instance, extracting a list of defining polynomials for $\overline{I}$ and testing whether a given $r \in R$ lies inside $\overline{I}$ (see \cite[Sections 6.6 and 6.7]{vasconcelos2004computational} or  \cite[Section 9.3]{vasconcelos2006integral}). Thus, its considerable aesthetic appeal notwithstanding, Proposition \ref{prop:conormCond} does not furnish an efficient algorithmic mechanism for verifying Condition (B). We are therefore compelled to employ a Corollary of this Proposition, where the integral closure has been replaced by a far more tractable object.
	
\section{An Effective Version of Condition (B)}\label{sec:saturations}	

In this section we prove our main result. Let $R$ be the coordinate ring of $\pp^n \times (\pp^n)^*$, and consider a projective variety $X \subset \pp^n$ with conormal map $\kappa_X:\Con(X) \surj X$. For any subvariety $Y \subset X$, the ideal sheaf $\Ide[\kappa_X^{-1}(Y)]$ constitutes an honest homogeneous ideal $I_Y + I_{\Con(X)}$ of $R$, and it therefore makes sense from an algorithmic perspective to perform a primary decomposition \cite[Chapter 4.8]{CLO} of this ideal. Given such a homogeneous ideal $I \lhd R$, we will denote the associated projective subvariety of $\pp^n \times (\pp^n)^*$ by $\bV(I)$. 

\begin{theorem}	Let $X\subset \pp^n$ be a pure dimensional projective variety and $Y$ a nonempty irreducible subvariety of its singular locus $X_{\rm sing}$. Consider a primary decomposition (of $R$-ideals)
\[
{\Ide}[{\kappa_X^{-1}(Y)}]= \bigcap_{i=1}^s Q_i,
\]
and let $\sigma \subset \set{1,2,\ldots,s}$ be the set of indices $i$ for which $\dim \kappa_X(\bV(Q_i)) < \dim Y$. Define 
\begin{align}\label{eq:W}
A := \left[\bigcup_{i \in \sigma}\kappa_X(\bV(Q_i))\right] \cup Y_{\rm sing}.
\end{align} 
Then the pair $(X_\text{\rm reg},Y-A)$ satisfies Condition (B). \label{thm:WhitB_Primary_Decomp}
\label{thm:whitneyconormal}
\end{theorem}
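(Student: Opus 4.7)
The plan is to apply Proposition~\ref{prop:conormCond} to the set $A$ defined in the statement, reducing the theorem to verifying the associated prime containment
\[
\ass\!\bigl(\Ide[\Con(X) \cap \Con(Y)] + \Ide[\kappa_X^{-1}(Y-A)]\bigr) \supset \ass\!\bigl(\overline{\Ide}[\kappa_X^{-1}(Y-A)]\bigr).
\]
As a first step, I would identify $\Ide[\kappa_X^{-1}(Y-A)]$ with $\bigcap_{i \notin \sigma} Q_i$. For $i \in \sigma$, the primary component $Q_i$ has $\kappa_X(\bV(Q_i)) \subseteq A$ by construction, so it is supported on $\kappa_X^{-1}(A)$ and is deleted. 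For $i \notin \sigma$, properness of the conormal map makes $\kappa_X(\bV(Q_i))$ a closed subvariety of $Y$ of dimension $\dim Y$; since $Y$ is irreducible this forces $\kappa_X(\bV(Q_i)) = Y$, so $\bV(Q_i)$ cannot be supported on the strictly lower-dimensional set $A$.

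The central geometric claim I would establish is that for every $i \notin \sigma$ the component $\bV(Q_i)$ is contained in $\Con(Y)$. Since $\kappa_X \colon \bV(Q_i) \surj Y$, the open subset $\bV(Q_i) \cap \kappa_X^{-1}(Y_{\rm reg})$ is dense in the irreducible variety $\bV(Q_i)$. Whitney's existence theorem for stratifications provides a proper closed $Z \subsetneq Y$ outside of which Condition~(A) holds for the pair $(X_{\rm reg}, Y_{\rm reg})$. At any $(y,\xi) \in \bV(Q_i)$ with $y \in Y_{\rm reg} \setminus Z$, the hyperplane $\xi$ is a limiting conormal of $X_{\rm reg}$ at $y$, so Condition~(A) forces $\xi \supseteq T_yY$, placing $(y,\xi)$ in $\Con(Y)$. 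The intersection $\bV(Q_i) \cap \Con(Y)$ is therefore a closed subset of the irreducible $\bV(Q_i)$ containing a dense open subset, and so must equal $\bV(Q_i)$.

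Finally, I would promote this geometric inclusion to the required associated prime containment. Write $J_1$ and $J_2$ for the left-hand and right-hand ideals displayed above. The central claim gives $V(J_2) \subseteq \Con(X) \cap \Con(Y)$, so every associated prime $P$ of $J_2$ contains $I_{\Con(X)} + I_{\Con(Y)} = \Ide[\Con(X) \cap \Con(Y)]$ and therefore also $J_1$. The main obstacle I anticipate is upgrading this containment to actual membership $P \in \ass(J_1)$: for minimal associated primes this is immediate because $\sqrt{J_1} = \sqrt{J_2}$, but embedded associated primes introduced by the integral closure operation require care. I expect this step to be resolved via a pointwise application of Lemma~\ref{lemma:directTranslation} at each $p \in Y - A$, where the primary components $Q_i$ with $i \in \sigma$ drop out of the localized picture entirely, and the remaining components are — by Step~2 — shared between $\Con(X) \cap \Con(Y)$ and $\overline{\kappa_X^{-1}(Y)}$ near $p$, allowing their local associated prime sets to be compared directly.
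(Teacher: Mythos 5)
Your Steps 1 and 2 are sound, but notice what they actually prove: for $i \notin \sigma$, the support containment $\bV(Q_i) \subset \Con(Y)$ shows that every limiting conormal hyperplane of $X$ at a point $p \in Y-A$ contains $T_pY$, i.e.\ it establishes Whitney's Condition (A) along $Y-A$, not Condition (B). The content of the L\^e--Teissier criterion beyond Condition (A) lives precisely in the embedded associated primes of the integral closure $\overline{\Ide}[\kappa_X^{-1}(Y-A)]$, and your final step never gains control of these. The containment $\bV(J_2) \subseteq \Con(X)\cap\Con(Y)$ only yields equality of radicals, hence of minimal primes; it says nothing about whether an embedded prime of the integral closure lying over a point of $Y-A$ belongs to $\ass(J_1)$ --- and such embedded primes are exactly how (B)-failures manifest when (A) already holds (Condition (A) is strictly weaker than Condition (B)). The proposed repair via a pointwise use of Lemma \ref{lemma:directTranslation} is circular: applying that lemma at $p$ requires precisely the containment of associated primes after adding $I_p$ that you are trying to prove, and the observation that the components are ``shared'' near $p$ is again only a statement about supports, not about the primary structure of $\overline{\Ide}[\kappa_X^{-1}(Y)]+I_p$ versus $\Ide[\Con(X)\cap\Con(Y)]+I_p$. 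So the hard half of the theorem is missing.

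For contrast, the paper never verifies the integral-closure containment directly. It invokes Whitney's existence theorem to produce a proper subvariety $B \subsetneq Y$ containing $Y_{\rm sing}$ with $(X_{\rm reg},Y-B)$ satisfying Condition (B); the forward direction of Proposition \ref{prop:conormCond} then supplies the associated-prime containment for $B$, and again for the enlargement $C := B \cup \bigcup_{i\in\rho_B-\rho_A}\kappa_X(\bV(Q_i))$. The punchline is combinatorial: $\rho_C = \rho_A$, so $\Ide[\kappa_X^{-1}(Y-C)] = \Ide[\kappa_X^{-1}(Y-A)] = \bigcap_{i\notin\sigma}Q_i$, and the converse direction of Proposition \ref{prop:conormCond} transfers Condition (B) from $Y-C$ to $Y-A$. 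In other words, the integral-closure side is controlled by bootstrapping from a locus where (B) is already known to hold, rather than by any geometric statement about the components $\bV(Q_i)$. To salvage your route you would need a genuinely new argument bounding $\ass\bigl(\overline{\Ide}[\kappa_X^{-1}(Y-A)]\bigr)$ (for instance via the Rees valuations of $\Ide[\kappa_X^{-1}(Y)]$ in $\Con(X)$), which your support-level claim does not provide.
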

\begin{proof}
    By the remarks following \cite[Thm 3.12]{MumfordOda}, we have 
    \begin{align}\label{eq:primary}
    {\Ide}[\kappa_X^{-1}(Y-A)]=\bigcap_{i \in \rho_A} Q_i,
    \end{align}
    where $\rho_A \subset \set{1,\ldots,s}$ is the collection of all $i$ for which $\kappa_X(\bV(Q_i)) - A$ is nonempty. Since $Y$ is irreducible and $\kappa_X(Q_i)$ is a subvariety of $Y$ for each $i$, we have $i \in \rho_A$ if and only if $\kappa_X(\bV(Q_i))= Y$. Therefore, $\rho_A$ is the complement of $\sigma$ in $\set{1,\ldots,s}$ (note that $Y_{\rm sing}$ is a proper subvariety of Y so $\kappa_X(\bV(Q_i)) - Y_{\rm sing}$ is nonempty for any $i\in \rho_A$ and the remaining factors are those where $i\in \sigma$). By Whitney's result on the existence of stratifications \cite[Theorem 19.2]{whitney1965tangents}, there is a proper subvariety $B \subsetneq Y$ containing $Y_\text{sing}$ so that $(X_\text{reg},Y-B)$ satisfies Condition (B). Thus, Proposition \ref{prop:conormCond} guarantees the containment
	\begin{align}\label{eq:intclos} 
    \ass(\Ide[\Con(X)\cap \Con(Y)] +\Ide[ \kappa_X^{-1}(Y-B)])\supset \ass(\overline{\Ide}[\kappa_X^{-1}(Y-B)]).
		\end{align} 
	Applying \eqref{eq:primary} with $A$ replaced by $B$, we note that $\rho_B$ must contain $\rho_A$ because we have $\kappa_X(\bV(Q_i))=Y$ for every $i \in \rho_A$, and $Y - B$ is nonempty for $B \subsetneq Y$. Setting 
	\[
	C := B \cup \left[\bigcup_{i\in \rho_B-\rho_A}\kappa_X(\bV(Q_i))\right],
	\] we note that Condition (B) automatically holds for $(X_\text{reg},Y-C)$ since $B$ is contained in $C$, whence by  Proposition \ref{prop:conormCond} we have 
	\begin{align}\label{eq:Cwhit}
   \ass( \Ide[\Con(X)\cap \Con(Y)]+\Ide[ \kappa_X^{-1}(Y-C)] )\supset \ass(\overline{\Ide}[\kappa_X^{-1}(Y-C)]).
	\end{align} The argument which gave us $\rho_A \subseteq \rho_B$ also yields $\rho_A \subseteq \rho_C$. We now claim that the opposite containment also holds, whence $\rho_A = \rho_C$. To see this, note that $C$ contains $\kappa_X(\bV(Q_i))$ for all the $i \in \rho_B-\rho_A$, i.e., for all the $i$ satisfying $\kappa_X(\bV(Q_i))\subsetneq Y$. Thus, if $i\in \rho_C$ then $\kappa_X(\bV(Q_i))= Y$ and hence $\rho_C\subseteq \rho_A$, which establishes the claim.  Hence by construction, ${\Ide}[\kappa_X^{-1}(Y-C)]$ and ${\Ide}[\kappa_X^{-1}(Y-A)]$ are identical since both equal $\bigcap_{i \in \rho_A} Q_i$. 
	Using \eqref{eq:Cwhit}, one final appeal to  Proposition \ref{prop:conormCond} confirms that the pair $(X_{\rm reg},Y-A)$ satisfies Condition (B), as desired. 
\end{proof}

We note that the argument given above adapts readily to the scenario where $Y$ is pure-dimensional but not necessarily irreducible. In this case, one defines $\rho_A$ to consist precisely of those $i \in \set{1,\ldots,s}$ for which $\kappa_X(\bV(Q_i))$ equals an irreducible component of $Y$, all of which are equidimensional. Similarly any valid choice of $B$ must be a proper subvariety of some irreducible component of $Y$ and hence satisfy $\dim(B)<\dim(Y)$. The proof then proceeds identically.  

\begin{example}
Consider the projectivized Whitney Cusp $\widetilde{X} \subset \PP^3$ given by \[\widetilde{X}=\bV\left(x_{0}^{2}x_{2}^{2}+x_{0}x_{3}^{3}-x_{1}^{2}x_{3}^{2}\right).\] The singular locus $
\widetilde{X}_{\rm Sing}$ equals $\bV(x_0,x_3)\cup\bV(x_2,x_3), 
$ and the usual (affine) Whitney cusp $X\subset \CC^3$ is obtained by setting $x_0 = 1$. The singular locus of $X$ is the line $Y:=\bV(x_2,x_3)$; note we slightly abuse notation and write $Y$ for both the affine and projective variety defined by $x_2=x_3=0$. Now, we have a primary decomposition
\begin{align*}
	{\Ide}[\kappa_X^{-1}(Y)]=\bigcap_{i=1}^9 Q_i, \text{ where:}
\end{align*}
\[
\xymatrixcolsep{.1in}
\xymatrixrowsep{-.08in}
\xymatrix{ \kappa_X(\bV(Q_1))=\kappa_X(\bV({Q}_2))=Y, &   \kappa_X(\bV({Q}_3))=\bV(x_1,x_2,x_3), \\
\kappa_X(\bV({Q}_4))=\bV(x_0,x_2,x_3), &   \kappa_X(\bV(Q_j))=\emptyset \text{ for }5 \leq j \leq 9.}
\] 
Thus, applying Theorem \ref{thm:WhitB_Primary_Decomp} and setting $x_0=1$ establishes Condition (B) for the pair $X_{\rm reg}=X-Y$ and $Y'=Y-\bV(x_1,x_2,x_3)$, which in turn gives the following strata: $X_{\rm reg}$, $Y'$, and $\bV(x_1,x_2,x_3)$. 
\end{example}

\section{Stratifying Projective Varieties}\label{sec:WSalgorithm} 
	
	In this section we describe a recursive algorithm which uses Theorem \ref{thm:whitneyconormal} to compute Whitney stratifications of pure-dimensional projective varieties. Since each irreducible component of an arbitrary projective variety can be stratified separately, this is by no means a severe restriction. And since various intermediate varieties which get constructed in our algorithm won't satisfy this purity criterion, it will be convenient to let $\Pure_d(Z)$ denote the set of all the pure $d$-dimensional irreducible components of a given projective variety $Z$. These components can be algorithmically extracted by computing the {\em minimal associated primes} of the defining ideal $I_Z$, and the main cost is a Gr\"obner basis computation --- see \cite{decker1999primary}.
	
	\subsection{Computing Conormal Ideals} \label{ssec:computeCon}
	In light of Theorem \ref{thm:whitneyconormal}, we will  often be required to compute (the equations which define) the conormal space $\Con(Z)$ of a given projective variety $Z \subset \pp^n$. The ideal $I_{\Con(Z)} \lhd \CC[x,\xi]$ is extracted in practice as follows. First, we let $I_Z = \langle f_1, \ldots, f_r\rangle$ be any defining ideal of $Z$, and let $c$ be its codimension $n - \dim(Z)$. One computes the Jacobian ideal $\textbf{Jac}_Z$ of $Z$ which is generated by all the $c\times c$ minors of the matrix of partial derivatives
	\[  
	\begin{bmatrix}
	\nicefrac{\partial f_1}{\partial x_0} &\cdots& \nicefrac{\partial f_1}{\partial x_n}\\
	\vdots& \ddots & \vdots \\
	\nicefrac{\partial f_r}{\partial x_0} &\cdots& \nicefrac{\partial f_r}{\partial x_n}\\
	\end{bmatrix},
	\] see \cite[pg.~27--28]{michalek2021invitation}. The singular set $Z_\text{sing} \subset Z$ is (by definition) the zero locus of the Jacobian ideal, namely:
	\[
	Z_\text{sing} = \bV(I_Z+\textbf{Jac}_Z).
	\]
	Now if we let $\textbf{Jac}^\xi_Z$ be the ideal generated by the $(c+1)\times (c+1)$ minors of the $\xi_i$-augmented Jacobian matrix:
	\[
	\begin{bmatrix}
	\xi_0 & \cdots &\xi_n\\
	\nicefrac{\partial f_1}{\partial x_0} &\cdots& \nicefrac{\partial f_1}{\partial x_n}\\
	\vdots& \ddots & \vdots \\
	\nicefrac{\partial f_r}{\partial x_0} &\cdots& \nicefrac{\partial f_r}{\partial x_n}\\
	\end{bmatrix},
	\] then $
	I_{\Con(Z)}$ is given by the saturation $(I_Z+\textbf{Jac}^\xi_Z):(\textbf{Jac}_Z)^\infty$, see  \cite[Eq.~(5.1)]{draisma2016euclidean} or \cite[\S2]{holme1988geometric} for instance. Algorithms for computing ideal sums and saturations are standard fare across computational algebraic geometry, and may be found in \cite{CLO} for instance. 
	
	\subsection{The Decomposition Subroutine} \label{ssec:decompose}
	
	For any pair of projective varieties $X,Y \subset \pp^n$ with $Y \subset X_\text{sing}$, the following subroutine, called {\bf Decompose}, implements the primary decomposition based constructions of Theorem \ref{thm:whitneyconormal}. Although we present the inputs and outputs of all our algorithms as projective varieties, in practice these must be represented on a machine by some choice of generating polynomials of their defining ideals. 
	
	\medskip

	\begin{center}
		\begin{tabular}{|r|l|}
			\hline
			~ & {\bf Decompose}$(Y,X )$ \\
			\hline
			~&{\bf Input:} Projective varieties $Y \subset X$ in $\pp^n$, with $d:=\dim Y$.\\
			~&{\bf Output:} A list of subvarieties $Y_\bullet$ of $Y$. \\
			\hline
			1 & {\bf Set} $Y_\bullet := (Y_d,Y_{d-1},\ldots,Y_0) := (Y,\varnothing,\ldots,\varnothing)$ \\
			2 &  {\bf Set} $J:=I_{\Con(X)}+I_{Y}$ \\
			3 & {\bf For each} primary component $Q$ of a primary decomposition of $J$\\
			4 & \spc {\bf Set} $K := Q\cap \CC[x]$ \\
			5 & \spc {\bf If}  $\dim \bV(K) < \dim Y$   \\
			6 & \spc \spc{\bf Add} $\bV(K)$ to $Y_{ \geq\dim \bV(K)}$\\
			7 & {\bf Return} $Y_\bullet$ \\
			\hline
		\end{tabular} 
	\end{center}
	
	\medskip
	
	The notation in Line 6 is meant to indicate that $V$ is added to $Y_i$ for all $i \geq \dim V$. This subroutine terminates because the {\bf For} loop in line 3 is indexed over primary components of a polynomial ideal, of which there can only be finitely many. The following result is a direct consequence of Theorem \ref{thm:whitneyconormal}.
	
	\begin{proposition}\label{prop:decomp}
		Let $Y \subset X$ be a pair of projective varieties in $\pp^n$ so that $Y \subset X_\text{\rm sing}$ and $\dim Y = d$. If {\bf Decompose} is called with input $(Y,X)$, then:
		\begin{enumerate}
			\item for all $i \in \set{0,\ldots, d-1}$, its output varieties satisfy $Y_i \subset Y_{i+1}$; also, 
			\item $Y_{d-1}$ is a (possibly empty) subvariety of $Y$ with $\dim Y_{d-1} < \dim Y$; and finally,
			\item all points of $Y_{\rm reg}$ where Condition (B) fails with respect to $X_{\rm reg}$ lie in $Y_{d-1}$, so the pair $(X_\text{\rm reg},Y_\text{\rm reg}-Y_{d-1})$ satisfies Condition (B). 
		\end{enumerate}
	\end{proposition}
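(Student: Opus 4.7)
The plan is to verify the three claims in sequence: Claims (1) and (2) are essentially algorithmic bookkeeping, whereas Claim (3) is where the content of Theorem \ref{thm:whitneyconormal} enters.

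For Claim (1), the convention on Line 6 adds $\bV(K)$ to every $Y_j$ with $j \geq \dim \bV(K)$; in particular, whenever a variety is added to $Y_i$, it is also added to $Y_{i+1}$, yielding the inclusion $Y_i \subset Y_{i+1}$. For Claim (2), the {\bf If} on Line 5 only admits subvarieties of dimension strictly less than $d$, and the primary decomposition on Line 3 produces only finitely many components; thus $Y_{d-1}$ is a finite union of proper subvarieties of $Y$ with $\dim Y_{d-1} < d$.

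Claim (3) is the main step. The plan is to identify $Y_{d-1}$ as produced by the algorithm with the union $\bigcup_{i \in \sigma} \kappa_X(\bV(Q_i))$ from Theorem \ref{thm:whitneyconormal}. First, Line 2 computes $J = I_{\Con(X)} + I_Y$, which equals $\Ide[\kappa_X^{-1}(Y)]$ by the discussion opening Section \ref{sec:saturations}, so Line 3 iterates over exactly the primary decomposition appearing in the theorem. Second, the elimination ideal $K := Q \cap \CC[x]$ on Line 4 is the defining ideal of the projection $\kappa_X(\bV(Q))$, since the projection $\pp^n \times (\pp^n)^* \to \pp^n$ is closed. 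Hence the dimension check on Line 5 singles out precisely the indices $i$ belonging to the set $\sigma$ of the theorem, and on termination we have $Y_{d-1} = \bigcup_{i \in \sigma} \kappa_X(\bV(Q_i))$. Setting $A := Y_{d-1} \cup Y_{\rm sing}$ then matches the set appearing in \eqref{eq:W}, and the theorem yields Condition (B) for the pair $(X_{\rm reg}, Y - A)$. Since $Y_{\rm reg} - Y_{d-1}$ avoids $Y_{\rm sing}$, it sits inside $Y - A$ as a (Zariski open, hence smooth) submanifold, and Proposition \ref{prop:condBdense} propagates Condition (B) to $(X_{\rm reg}, Y_{\rm reg} - Y_{d-1})$, establishing Claim (3).

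The main obstacle to navigate is that Proposition \ref{prop:decomp} does not assume $Y$ is irreducible, whereas Theorem \ref{thm:whitneyconormal} as stated does. This can be resolved via the remark immediately following the theorem's proof, which extends the conclusion to pure-dimensional $Y$; if $Y$ is not even pure-dimensional, one argues pointwise, since any $p \in Y_{\rm reg}$ lies in a unique irreducible component of $Y$ and Condition (B) is a purely local property at $p$.
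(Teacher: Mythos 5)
Your proof is correct and takes essentially the same route as the paper's: Lines 2--6 of {\bf Decompose} are identified with the primary decomposition and the set $A$ of Theorem \ref{thm:whitneyconormal} (indeed $Y-A = Y_{\rm reg}-Y_{d-1}$ exactly, so the final appeal to Proposition \ref{prop:condBdense} is not even needed), and the paper handles non-irreducible input the same way you do, via the pure-dimensional remark following that theorem. The only point where you go beyond the paper is the pointwise sketch for non-pure-dimensional $Y$, which is somewhat hand-wavy as written (the component-wise primary decompositions are not literally those of $\Ide[\kappa_X^{-1}(Y)]$), but this case is never needed since {\bf WhitStrat} only calls {\bf Decompose} on $\Pure_d(X_d)$.
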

	\begin{proof} 
		The first assertion holds because of Line 6: whenever a $Z:=\bV(K)$ is added to $Y_{\dim Z}$, it is also added to all the subsequent $Y_i$ with $i > \dim Z$.  
		For the second assertion, from Line 2, $J$ is the ideal of the scheme $\kappa_X^{-1}(Y)$ and hence the image of any primary component under $\kappa_X$ must be contained in $Y$, but $Y$ is irreducible and $\dim(\bV(K))<\dim(Y)$, hence all $\bV(K)$ are proper subvarieties of $Y$. The third assertion follows directly from Theorem \ref{thm:whitneyconormal}.
\end{proof}
	
	The output $Y_\bullet$ of {\bf Decompose} described above need not constitute a Whitney stratification of the input variety $Y$ --- Proposition \ref{prop:decomp} does not guarantee that Condition (B) holds among successive differences of the form $Y_i - Y_{i-1}$. 
	
	\subsection{The Main Algorithm} \label{ssec:mainalg1}
	
	Let $X\subset \pp^n$ be a pure $k$-dimensional complex projective variety defined by a radical homogeneous ideal $I_X$. The algorithm {\bf WhitStrat}, described below, takes in $X$ as input and returns a nested sequence of its subvarieties $X_\bullet$. An essential part of this algorithm is a {\bf Merge} statement, which accepts two nested sequences of varieties $V_\bullet$ and $W_\bullet$ of lengths $k$ and $d$, with $k > d$. To {\bf Merge} $V_\bullet$ with $W_\bullet$, one updates the longer sequence $V_\bullet$ via the following rule:
	\[
	V_i \gets V_i \cup W_{\max(i,d)}.
	\]
	The property $V_i \subset V_{i+1}$ continues to hold after such an operation. The following algorithm {\bf WhitStrat} uses {\bf Decompose} in order to construct Whitney stratifications of pure-dimensional projective varieties.
	
	\medskip
	
	\begin{center}
		\begin{tabular}{|r|l|}
			\hline
			~ & {\bf WhitStrat}$(X)$ \\
			\hline
			~&{\bf Input:} A pure $k$-dimensional variety $X\subset \pp^n$.\\
			~&{\bf Output:} A list of subvarieties $X_\bullet$ of $X$. \\
			\hline
			1 & {\bf Set} $X_\bullet := (X_k,X_{k-1},\ldots,X_0) := (X,\varnothing,\ldots,\varnothing)$ \\
			2 & {\bf Compute} $X_{\rm Sing}$ and $\mu := \dim(X_{\rm Sing})$ \\
			3 & {\bf For each} irreducible component $Z$ of $X_\text{sing}$  \\
			4 & \spc {\bf Add} $Z$ to $X_{\geq \dim Z}$ \\
			5 & {\bf For each} $d$ in $(\mu, \mu-1,\ldots,1,0)$ \\
			6 & \spc {\bf Merge} $X_\bullet$ with ${\bf Decompose}(\Pure_d(X_d),X)$ \\
			7 & \spc {\bf Merge} $X_\bullet$ with ${\bf WhitStrat}(\Pure_d({X_d}))$\\
			8 & {\bf Return} $X_\bullet$ \\
			\hline
		\end{tabular}
	\end{center}
	
	\medskip
	
	To verify that this algorithm terminates, we note that the {\bf For} loop on Line 3 runs once per irreducible component of $X_\text{sing}$, of which there can only be finitely many. And the {\bf For} loop on Line 5 will terminate provided that the recursive call on Line 7 terminates; but the dimension of the variety $X_d$ is bounded above by $\mu < k$, i.e., it is strictly less than the dimension of the input variety. Thus, the recursion terminates after finitely many steps and produces a nested sequence $X_\bullet$ of subvarieties of $X$. Our goal in the next section is to establish that $X_\bullet$ constitutes a valid Whitney stratification of $X$.
	
	\subsection{Correctness}
	
	Let $X_\bullet(d)$ denote the nested sequence of varieties $X_\bullet$ as they stand at the {\em end} of the $d$-th iteration of {\bf For} loop (in Line 5 of {\bf WhitStrat}) where $d$ ranges over $(\mu, \mu-1, \ldots, 1,0)$. These $X_i(d)$ fit into a $(\mu+1) \times (k+1)$ grid of projective varieties and inclusion maps:
	\begin{align}\label{eq:grid}
	\xymatrixrowsep{.2in}
	\xymatrixcolsep{.33in}
	\begin{gathered}
	\xymatrix{
		X_k(\mu) \ar@{_{(}->}[d] & X_{k-1}(\mu) \ar@{_{(}->}[d] \ar@{_{(}->}[l] \ar@{_{(}->}[d] & \cdots \ar@{_{(}->}[l]  & X_{1}(\mu) \ar@{_{(}->}[d] \ar@{_{(}->}[l]& X_0(\mu) \ar@{_{(}->}[d] \ar@{_{(}->}[l] \\
		X_{k}(\mu-1) \ar@{_{(}->}[d] & X_{k-1}(\mu-1) \ar@{_{(}->}[l] \ar@{_{(}->}[d] & \cdots \ar@{_{(}->}[l] & X_{1}(\mu-1) \ar@{_{(}->}[d] \ar@{_{(}->}[l] & X_0(\mu-1) \ar@{_{(}->}[l] \ar@{_{(}->}[d] \\
		\myvdots \ar@{_{(}->}[d] & \myvdots \ar@{_{(}->}[d] & \ddots & \myvdots \ar@{_{(}->}[d] & \myvdots \ar@{_{(}->}[d] \\
		X_k(1) \ar@{_{(}->}[d] & X_{k-1}(1) \ar@{_{(}->}[d] \ar@{_{(}->}[l] & \cdots \ar@{_{(}->}[l] & X_{1}(1) \ar@{_{(}->}[l] \ar@{_{(}->}[d] & X_0(1) \ar@{_{(}->}[l] \ar@{_{(}->}[d] \ar@{_{(}->}[l] \\
		X_k(0) & X_{k-1}(0) \ar@{_{(}->}[l] & \cdots \ar@{_{(}->}[l] & X_{1}(0) \ar@{_{(}->}[l] & X_0(0) \ar@{_{(}->}[l]}
	\end{gathered}
	\end{align}
	The vertical inclusion maps arise since each $X_\bullet(d)$ is obtained from the previous $X_\bullet(d+1)$ by performing two {\bf Merge} operations (in Lines 6 and 7 respectively). Note that each iteration of the {\bf For} loop in Line 5 moves us from one row to the next, until at last the bottom row (corresponding to index $d = 0$) contains the output. 
	
	\begin{remark}\label{rem:gridcols} Before entering the {\bf For} loop of Line 5, the sequence $X_\bullet$ contains the input variety $X$ in the top dimension (i.e., $X_k = X$) and irreducible components of $X_\text{sing}$ in lower dimensions. It follows that the columns of our grid \eqref{eq:grid} come in three flavours, depending on the index $i$:
		\begin{enumerate}
			\item the left-most column (with index $i=k$) identically equals $X$, i.e., $X_k(d) = X$ regardless of the row index $d$ in $\set{0,\ldots,k}$;
			\item the next few columns (with index $k > i \geq \mu$) are also constant --- independent of the row index $d$, each $X_i(d)$ in this range equals $X_\text{sing}$; and finally,
			\item the $i$-th column for $\mu > i \geq 0$ stabilizes below its $(i+1)$-indexed entry:
			\begin{align}\label{eq:triangular}
			X_i(d) = X_i(i+1) \quad \text{ for all } \quad 0 \leq d \leq i < \mu.
			\end{align}
		\end{enumerate}
		These three assertions follow from the observation that during the $d$-indexed iteration of the {\bf For} loop in Line 5, the two {\bf Merge} operations (from Lines 6 and 7) which produce the row $X_\bullet(d)$ are only allowed to merge subvarieties of $X_d(d+1)$ to the preceding row $X_\bullet(d+1)$. 
	\end{remark}

	Define the successive differences across the rows of the grid \eqref{eq:grid}, i.e.,
	\begin{align}\label{eq:sdiff}
	S_i(d) := X_i(d) - X_{i-1}(d).
	\end{align}
	It follows from Lines 2-4 of {\bf WhitStrat} that we have the containment 
	\[
	X_i(d)_\text{Sing} \subset X_{i-1}(d) \text{ for all } d \leq i,
	\] whence $S_i(d)$ is a smooth manifold whenever $d \leq i$. Therefore, $S_i(0)$ is smooth for all $i$. In the remainder of this Section, we will now show that $S_i(0)$ constitutes the $i$-strata of a Whitney stratification $X_\bullet(0)$ of the input variety $X$. Here is a first step in this direction.
	
	\begin{proposition}\label{prop:topB}
		The pair $(S_k(i),S_{d}(i))$ satisfies Condition (B) for all $0 \leq i\leq d \leq k$. In other words, $X_{d-1}(i)$ contains all points of $X_d(i)_\text{\rm reg}$ where Condition (B) fails with respect to $S_k(i)$. 
	\end{proposition}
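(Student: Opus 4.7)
The plan is to reduce the claim to a fixed representative in each column via the stabilization observed in Remark \ref{rem:gridcols}, and then to invoke Proposition \ref{prop:decomp}(3) together with the submanifold inheritance of Condition (B) from Proposition \ref{prop:condBdense}.

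First I would dispose of the degenerate or vacuous cases. By parts (1) and (2) of Remark \ref{rem:gridcols} one has $S_k(i) = X - X_{\rm sing} = X_{\rm reg}$ for every $i$, so the left-hand stratum in the pair is independent of $i$. For $\mu < d < k$, part (2) of the same Remark also gives $X_d(i) = X_{d-1}(i) = X_{\rm sing}$ and hence $S_d(i) = \varnothing$, making the claim vacuous there, while the case $d = k$ collapses to a single stratum and is trivial.

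It remains to treat $0 \leq i \leq d \leq \mu$. Remark \ref{rem:gridcols}(3) gives $X_d(i) = X_d(d+1)$ and $X_{d-1}(i) = X_{d-1}(d)$ (and the same holds for $d = \mu$ by Remark \ref{rem:gridcols}(2) applied to the column $d = \mu$), so the set $S_d(i)$ is actually independent of $i$ in this range and coincides with $S_d(d) = X_d(d+1) - X_{d-1}(d)$. At iteration $d$ of the {\bf For} loop in {\bf WhitStrat}, Line 6 invokes {\bf Decompose}$(\Pure_d(X_d(d+1)), X)$ and merges the resulting $Y_\bullet$ into the running sequence, producing the inclusion $Y_{d-1} \subset X_{d-1}(d)$. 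The Decompose input $\Pure_d(X_d(d+1))$ lies inside $X_{\rm sing}$ by nestedness, since $X_d(d+1) \subset X_{k-1}(d+1) = X_{\rm sing}$ by Remark \ref{rem:gridcols}(2). Consequently Proposition \ref{prop:decomp}(3) applies and yields Condition (B) for the pair $(X_{\rm reg}, \Pure_d(X_d(d+1))_{\rm reg} - Y_{d-1})$.

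To conclude, I would verify that $S_d(d)$ is a smooth submanifold of $\Pure_d(X_d(d+1))_{\rm reg} - Y_{d-1}$ and then transfer Condition (B) via Proposition \ref{prop:condBdense}, applied with $X' = X_{\rm reg}$ left unchanged and $Y' = S_d(d)$ as a submanifold of the shrunken ambient $Y$. The smoothness of $S_d(d)$ was recorded in the paragraph just before the proposition; since $S_d(d)$ has pure dimension $d$, it cannot meet any strictly lower-dimensional component of $X_d(d+1)$ and therefore lands in the regular locus of $\Pure_d(X_d(d+1))$. The inclusions $S_d(d) \cap X_{d-1}(d) = \varnothing$ and $Y_{d-1} \subset X_{d-1}(d)$ then force $S_d(d) \subset \Pure_d(X_d(d+1))_{\rm reg} - Y_{d-1}$, completing the argument. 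The main obstacle I anticipate is this last bookkeeping step: one must carry, as an invariant throughout the recursion, the fact that the components of $X_d(i)$ of dimension strictly less than $d$ are already absorbed into $X_{d-1}(i)$, so that subtracting $X_{d-1}(i)$ genuinely lands inside the pure $d$-dimensional piece $\Pure_d(X_d(d+1))$.
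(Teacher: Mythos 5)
Your proposal is correct and takes essentially the same route as the paper's proof: reduce to the single pair $(S_k(d),S_d(d))$ via the column stabilization of Remark \ref{rem:gridcols}, apply Proposition \ref{prop:decomp}(3) to the {\bf Decompose} call of iteration $d$, and transfer Condition (B) with Proposition \ref{prop:condBdense} using the containments supplied by the {\bf Merge} operations. The bookkeeping invariant you flag at the end does hold (every variety {\bf Add}ed or {\bf Merge}d enters the sequence at its own dimension index and above, and each $\bV(K)$ is irreducible since $K$ is the contraction of a primary ideal), and the paper leans on it just as tacitly --- it is asserted without proof in the argument for Proposition \ref{prop:allB} --- while it secures the smoothness/regularity of $S_d(d)$ via the recursive {\bf WhitStrat} output $W_\bullet$ merged in Line 7, which is the same fact you cite from the paragraph preceding the Proposition.
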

	\begin{proof} By Remark \ref{rem:gridcols}, we may safely restrict to the case $0 \leq i\leq d \leq \mu$ since we have 
		\[
		X_\mu(d)=X_{\mu+1}(d)=\cdots =X_{k-1}(d)
		\] for all $d$ by Remark \ref{rem:gridcols}(2). Let $V_\bullet$ denote the output of {\bf Decompose} obtained (in Line 6) during the $d$-th iteration of the {\bf For} loop in Line 5. It follows from Proposition \ref{prop:decomp}(3) that the pair
		\[
		P := \left(X_\text{reg},X_d(d+1)_\text{reg}-V_d\right)
		\] satisfies Condition (B). By Proposition \ref{prop:decomp}(2), we know that $V_d$ is a subvariety of $X_d(d+1)$ of dimension strictly smaller than $d$. Since $V_\bullet$ is merged with $X_\bullet(d+1)$ in Line 6 en route to producing $X_\bullet(d)$, we know that $V_d$ must in fact be a subvariety of $X_{d-1}(d)$. Now note that 
		\begin{align*}
		S_k(d) &= X_k(d) - X_{k-1}(d) & \text{ by \eqref{eq:sdiff}}, \\
		&= X - X_{k-1}(d) & \text{ by Remark \ref{rem:gridcols}(1)}. 
		\end{align*} Since $\dim X_{k-1}(d) < k$, we know that $S_k(d)$ is dense in $X_\text{reg}$. Moreover, since $V_d$ is entirely contained in $X_{d-1}(d)$, we may apply Proposition \ref{prop:condBdense} to the pair $P$ above and conclude that the new pair 
		\[
		P' := \left(S_k(d),X_d(d+1)_\text{reg}-X_{d-1}(d)\right)
		\]
		also satisfies Condition (B). Now let $W_\bullet$ be the output of the recursive call to {\bf WhitStrat} in Line 7 during the $d$-th iteration of the {\bf For} loop in Line 5. From Lines 2-4, we deduce that $X_d(d+1)_\text{sing} \subset W_{d-1}$, and after the {\bf Merge} operation of Line 7 we are also guaranteed $W_{d-1} \subset X_{d-1}(d)$. Putting these containments together gives
		\[
		X_d(d+1)_\text{sing} \subset W_{d-1} \subset X_{d-1}(d). 
		\]
		By \eqref{eq:triangular}, we have $X_d(d+1) = X_d(d)$, whence $X_d(d)_\text{sing} \subset X_{d-1}(d)$. Therefore, the difference $S_d(d) := X_d(d) - X_{d-1}(d)$ in fact equals $X_d(d+1)_\text{reg}-X_{d-1}(d)$. Using this in the pair $P'$ guarantees that the pair $(S_k(d),S_{d}(d))$ satisfies Condition (B). Finally, the conclusion for the pair $(S_k(i),S_{d}(i))$ follows since subsequent iterations of the the {\bf For} loop (corresponding to lower $d$ values) do not alter $X_{\geq d}$.
	\end{proof}
	
	Next, we will establish that Condition (B) is satisfied by arbitrary pairs of successive differences in \eqref{eq:grid} for sufficiently small row index.
	
	\begin{proposition}\label{prop:allB}
		The pair $(S_j(i),S_i(i))$ satisfies Condition (B) for all $0 \leq i < j \leq k$.
	\end{proposition}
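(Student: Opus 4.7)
The plan is to argue by strong induction on $k=\dim X$, with the base case $k=0$ being vacuous. The inductive hypothesis asserts that Proposition \ref{prop:allB} holds for every projective variety of dimension strictly less than $k$; combined with Proposition \ref{prop:topB}, this will imply that the recursive call on Line 7 of {\bf WhitStrat} returns a valid Whitney stratification, and that fact is the engine we will exploit. Fix indices $0\le i<j\le k$; the argument splits into three cases depending on the relation of $j$ to $\mu=\dim X_{\rm sing}$.

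For $j=k$, Proposition \ref{prop:topB} applied with its index $d$ set to our $i$ immediately supplies Condition (B) for $(S_k(i),S_i(i))$. For $\mu<j<k$, Remark \ref{rem:gridcols}(2) gives $X_j(i)=X_{j-1}(i)=X_{\rm sing}$, so $S_j(i)=\varnothing$ and the claim is vacuously true.

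The essential case is $j\le\mu$. During the $j$-th iteration of the {\bf For} loop on Line 5, Line 7 invokes {\bf WhitStrat}$(Y)$ with $Y:=\Pure_j(X_j(j+1))$. Since $\dim Y\le j\le\mu<k$, the inductive hypothesis applies, and the returned sequence $W_\bullet$ is a Whitney stratification of $Y$; in particular the pair $(W_j-W_{j-1},W_i-W_{i-1})$ satisfies Condition (B). Using the column-stabilization identity \eqref{eq:triangular} from Remark \ref{rem:gridcols}(3), we rewrite $S_j(i)=X_j(j+1)-X_{j-1}(j)$ and $S_i(i)=X_i(i+1)-X_{i-1}(i)$. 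Tracing the two merge operations in iteration $j$ and invoking Proposition \ref{prop:decomp}, we identify $S_j(i)$ as a Zariski-dense open submanifold of $W_j-W_{j-1}$ (after discarding a lower-dimensional closed subvariety arising from other irreducible components of $X_j(j+1)$), and we identify $S_i(i)\cap Y$ as a submanifold of $W_i-W_{i-1}$. Proposition \ref{prop:condBdense} then transfers Condition (B) from $(W_j-W_{j-1},W_i-W_{i-1})$ to $(S_j(i),S_i(i)\cap Y)$; the residual portion $S_i(i)-Y$ contributes nothing since those points lie away from the closure of $S_j(i)$, rendering Condition (B) vacuous there.

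The main obstacle will be the careful bookkeeping of the merge operations across iterations $d=\mu,\mu-1,\ldots,i$: one must verify that the cumulative effect on each lower-dimensional entry $X_\ell(i)$ admits a decomposition into pieces each of which either comes from the Whitney stratification produced by some recursive call (where the inductive hypothesis applies directly) or lies in a component disjoint from a neighbourhood of the points where Condition (B) must be checked. Only then can one be certain that no pollutant stratum survives the merging process on which Condition (B) could fail.
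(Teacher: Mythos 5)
Your overall skeleton is the same as the paper's: reduce to the case $j\le\mu$ (your treatment of $j=k$ via Proposition \ref{prop:topB} with $d=i$, and of $\mu<j<k$ via Remark \ref{rem:gridcols}(2), is fine), take $W_\bullet$ to be the output of the recursive call at iteration $d=j$, observe that $S_j(i)$ is Zariski-dense in $W_j-W_{j-1}$, discard points of $S_i(i)$ outside $W_j\supseteq\overline{S_j(i)}$ as vacuous, and transfer Condition (B) via Proposition \ref{prop:condBdense}. Framing the recursion as strong induction on $\dim X$ is legitimate (the paper in effect only needs Proposition \ref{prop:topB} applied to the sub-call rather than the full inductive correctness of $W_\bullet$, but that difference is harmless).

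The genuine gap is your assertion that $S_i(i)\cap Y$ is a submanifold of $W_i-W_{i-1}$. The grid entry $X_i(i)$ is not built from $W_\bullet$ alone: in the same iteration $d=j$, Line 6 merges in the output of \textbf{Decompose}$(\Pure_j(X_j),X)$, whose $i$-dimensional members are subvarieties of $Y=W_j$ (they record where Condition (B) fails relative to $X_{\rm reg}$) and need not be contained in $W_i$; moreover the later iterations $d$ with $i\le d<j$ merge in further subvarieties of $X_d(d+1)$ which can meet $W_j$ without lying in $W_i$. Consequently $S_i(i)\cap Y$ may contain points outside $W_i-W_{i-1}$, and at such points Condition (B) relative to $S_j(i)$ is \emph{not} vacuous, since these points sit inside $W_j$, i.e.\ inside the closure of $S_j(i)$; so neither Proposition \ref{prop:condBdense} applied to $(W_j-W_{j-1},W_i-W_{i-1})$ nor your remark that the "residual portion lies away from the closure" covers them. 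Your final paragraph correctly identifies this bookkeeping as the main obstacle, but the dichotomy you propose there (either a piece comes from a recursive stratification or it is disjoint from a neighbourhood of the points to be checked) is exactly what fails for these pieces. The paper resolves this by writing $X_i(i)=W_i\cup Z$ and giving a separate argument for points of $W_j\cap(Z-W_i)$, using Proposition \ref{prop:topB} applied to the recursive call on $\Pure_j(X_j(j))$ to confine the possible Condition (B) failure points inside $W_j$ to $W_{j-1}$; some argument of this kind must be added before your transfer step yields the proposition for all of $S_i(i)$ rather than only for $S_i(i)\cap W_i$.
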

	\begin{proof} From Remark \ref{rem:gridcols}, we have $S_j(i)=S_j(j)$ since $i < j$. Let $W_\bullet$ be the output of the recursive call in Line 7 of {\bf WhitStrat} during the $d = j$ iteration of the {\bf For} loop in Line 5, so we have $W_j = \Pure_j(X_j(j))$. Recall by \eqref{eq:sdiff} that $S_j(i) = X_j(i) - X_j(i-1)$; now any irreducible component $Y \subset X_j(i)$ with $\dim Y < j$ must also lie in $X_{j-1}(i)$, whence 
		\begin{align*}
		S_j(i) &=\Pure_j(X_j(j))-X_{j-1}(i) \nonumber \\ 
		&=W_j-X_{j-1}(i). 
		\end{align*} Now $W_{j-1}$ lies in $X_{j-1}(j)$ because of the {\bf Merge} operation in Line 7 of {\bf WhitStrat}, and in turn $X_{j-1}(j)$ is a subvariety of $X_{j-1}(i)$ as described in \eqref{eq:grid}. Since both varieties $W_{j-1} \subset X_{j-1}(i)$ have dimension strictly smaller than $j$, we have that $S_j(i)$ is dense in $W_j-W_{j-1}$. Thus, it suffices to show that all points in $W_j \cap X_i(i)$ where Condition (B) fails with respect to $S_j(i)$ lie within $W_j\cap X_{i-1}(i)$. To confirm this, note that by construction $X_i(i)$ is a union of the form $W_i \cup Z$, where the subvariety $Z \subset X_i(i)$ has $\dim(Z) \leq i$. First we consider the case where $Z$ is empty; in this case, we know from Proposition \ref{prop:decomp}(3) that the pair
		\[
		(W_j-W_{j-1},W_i-W_{i-1})
		\] satisfies Condition (B), so the desired result follows immediately from Proposition \ref{prop:condBdense}. On the other hand, if $Z$ is nonempty, we can assume without loss of generality that $Z$ is not contained in $W_i$. Now any point of $W_j \cap (X_i(i)-W_i)= W_j\cap (Z-W_i)$ where Condition (B) fails with respect to $W_j$ must lie in $W_{j-1}$ by Proposition \ref{prop:topB}. Thus, no such point lies in $S_j(i)$, and it remains to show that all points in $W_i\cap X_i(i)=W_i$ where Condition (B) fails with respect to $S_j(i)$ are contained in $X_{i-1}(i)$. But since $W_{i-1}\subset X_{i-1}(i)$ due to the {\bf Merge} operation, this follows immediately from Proposition \ref{prop:topB}.
	\end{proof}
	
	We can now confirm that the output of {\bf WhitStrat} constitutes a valid Whitney stratification of $X$.
	
	\begin{theorem}
		When called on a pure $k$-dimensional complex projective variety $X\subset \pp^n$, the output $X_\bullet$ of {\bf WhitStrat} forms a Whitney stratification of $X$.  \label{thm:WSCorrect}
	\end{theorem}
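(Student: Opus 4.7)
The plan is to verify the three axioms of Definition \ref{def:whitstrat} for the output sequence $X_\bullet := X_\bullet(0)$. That $X_\bullet$ is a nested sequence of closed projective subvarieties of $X$ is immediate from inspection of {\bf WhitStrat}, and the strata $M_i := S_i(0) = X_i(0) - X_{i-1}(0)$ have already been shown to be smooth complex manifolds in the paragraph preceding Proposition \ref{prop:topB} (using $X_i(d)_{\rm sing} \subset X_{i-1}(d)$ for all $d \leq i$). Local finiteness is immediate: each $M_i$ is a quasi-projective variety with finitely many irreducible components, and the index $i$ ranges over the finite set $\{0,1,\ldots,k\}$.

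The crucial task is to verify Condition (B) for every pair of strata $(S_j(0), S_i(0))$ with $0 \leq i < j \leq k$. I would derive this from Proposition \ref{prop:allB} by invoking the stabilization results of Remark \ref{rem:gridcols}. For any $0 \leq i \leq k$, the identities $X_i(0) = X_i(i)$ and $X_{i-1}(0) = X_{i-1}(i)$ (which follow directly from the three cases of Remark \ref{rem:gridcols}, using $X_k(d) = X$ and $X_\ell(d) = X_{\rm sing}$ for $\mu \leq \ell < k$, and the stabilization below the diagonal for $\ell < \mu$) together yield $S_i(0) = S_i(i)$. Moreover, for $0 \leq i < j \leq k$, the identity $S_j(i) = S_j(j)$ (already recorded at the opening of the proof of Proposition \ref{prop:allB}) combined with $S_j(0) = S_j(j)$ gives $S_j(0) = S_j(i)$. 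Consequently $(S_j(0), S_i(0))$ and $(S_j(i), S_i(i))$ are literally the same pair, and Proposition \ref{prop:allB} asserts that the latter satisfies Condition (B).

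Finally, the frontier axiom does not require independent verification: as remarked just after Definition \ref{def:whitstrat}, for locally finite stratifications it follows automatically from Condition (B) by \cite[Corollary 10.5]{Mather2012}. The hardest conceptual step in this plan is the indexing bookkeeping needed to align the strata in the final row $X_\bullet(0)$ with those in the intermediate rows $X_\bullet(i)$, but once the column-stabilization content of Remark \ref{rem:gridcols} is unpacked this is routine, and the theorem reduces almost immediately to Proposition \ref{prop:allB}.
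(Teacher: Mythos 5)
Your proposal is correct and follows essentially the same route as the paper: smoothness of the differences $S_i(0)$ comes from the containment $X_i(d)_{\rm sing} \subset X_{i-1}(d)$ noted after \eqref{eq:sdiff}, and Condition (B) for every pair of strata is obtained by combining Proposition \ref{prop:allB} with the column-stabilization identities $S_i(0)=S_i(i)$ and $S_j(0)=S_j(i)$ from Remark \ref{rem:gridcols}. Your explicit treatment of local finiteness and of the frontier axiom (via Mather's result cited after Definition \ref{def:whitstrat}) is a harmless elaboration of what the paper leaves implicit.
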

	\begin{proof} 
		As remarked after \eqref{eq:sdiff}, each $S_i(d)$ is smooth for $d \leq i$. The conclusion follows immediately from Proposition \ref{prop:allB} since for any pair $(S_j(i),S_i(i))$ with $0\leq i \leq j \leq k$ we have $S_i(i)=S_i(0)$ and $S_j(i)=S_j(0)$ by Remark \ref{rem:gridcols}. Thus, Condition (B) holds for every pair $(S_j(0),S_i(0))$ with $i \leq j \leq k$. 
	\end{proof}

	This algorithm can also be used to stratify affine complex varieties via the following dictionary: for each affine complex variety $X \subset \CC^n$, we write $PX \subset \pp^n$ for its {\em projective closure} \cite[Proposition~2.8]{michalek2021invitation}, which is constructed as follows. Let $\set{f_1, \ldots, f_r}$ be a Gr\"obner basis for the defining ideal $I_X \lhd \CC[x_1,\ldots,x_n]$. For each $i$ in $\set{1,\ldots,r}$, write $F_i$ for the homogenisation of $f_i$ in $\CC[x_0,x_1,\ldots,x_n]$. Then, the projective closure $PX \subset \pp^n$ is the complex projective variety given by $\bV(F_1,\ldots,F_r)$. Conversely, one can recover $X$ from $PX$ by {\em dehomogenizing}, i.e., by setting $x_0 = 1$ in each defining polynomial $F_i$.
	
	\begin{corollary}
		Let $X\subset \CC^n$ be a pure $k$-dimensional affine complex variety and let $PX \subset \pp^n$ be its projective closure. If $PX_\bullet$ is the output of  {\bf WhitStrat}$(PX)$, then a valid Whitney stratification of $X$ is given by $X_\bullet$, where each $X_i$ is the dehomogenization of $PX_i$. \label{cor:affine_Whit_Strat}
	\end{corollary}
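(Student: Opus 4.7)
The plan is to realize the dehomogenization procedure geometrically, as restriction to the standard affine chart $U_0 := \{[x_0:\cdots:x_n] \in \pp^n : x_0 \neq 0\}$ equipped with its canonical biholomorphism $\phi : U_0 \to \CC^n$ sending $[1:x_1:\cdots:x_n]$ to $(x_1,\ldots,x_n)$. Under this identification one has $X = \phi(PX \cap U_0)$ and, for every $i$, $X_i = \phi(PX_i \cap U_0)$. (Note that $PX$ is pure $k$-dimensional since $X$ is, so the call to {\bf WhitStrat} in the statement is legitimate.) The problem then reduces to showing that restricting a Whitney stratification to an open subset of the ambient space and transporting it through a biholomorphism preserves all three axioms of Definition \ref{def:whitstrat}.

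First I would verify the combinatorial axioms. Each $PX_i$ is Zariski closed in $PX$, intersection with $U_0$ preserves relative closedness, and the nested containments $PX_{i-1} \subset PX_i$ guaranteed by Theorem \ref{thm:WSCorrect} descend immediately, so $X_\bullet$ is a filtration of $X$ by closed subvarieties. Each difference $X_i - X_{i-1}$ equals the biholomorphic image of the open subset $(PX_i - PX_{i-1}) \cap U_0$ of the complex $i$-manifold $PX_i - PX_{i-1}$, and is therefore an $i$-dimensional complex manifold (possibly empty, which Definition \ref{def:whitstrat} vacuously permits). Local finiteness and the frontier axiom transfer automatically from $PX_\bullet$ to $X_\bullet$: both are local properties preserved by passage to an open subset through a homeomorphism.

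The substantive step is verifying Whitney's Condition (B) for every pair of strata of $X_\bullet$. The key fact to invoke is that Condition (B) is a local, diffeomorphism-invariant property of pairs of smooth submanifolds: the derivative of $\phi$ at each $p \in U_0$ induces continuous bijections between the relevant tangent Grassmannian bundles over $p$ and over $\phi(p)$, under which limits of tangent planes correspond; moreover, the projective secant line in $\pp^n$ between two points of $U_0$ meets $U_0$ in the affine secant line between their $\phi$-images, with compatible limits. Consequently, for any pair of strata $(S, S')$ of $PX_\bullet$ and any point $p \in S' \cap U_0$, Condition (B) holds at $p$ for $(S, S')$ in $\pp^n$ if and only if it holds at $\phi(p)$ for $(\phi(S \cap U_0), \phi(S' \cap U_0))$ in $\CC^n$. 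Theorem \ref{thm:WSCorrect} supplies the former for every pair of strata, yielding the latter throughout $X_\bullet$.

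The main obstacle will be carefully justifying this biholomorphism-invariance of Condition (B). Although classical, it requires some bookkeeping because Definition \ref{def:whitstrat} is stated in terms of sequences, limits, and the ambient Euclidean (resp.\ projective) structure rather than purely intrinsically, so one must trace through how these limits behave under the chart map $\phi$ and its derivative on Grassmannians. Once this invariance is established (or quoted from the standard references), the remainder of the argument is the routine combination of the three items already outlined.
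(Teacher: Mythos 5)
Your proposal is correct and takes essentially the same route as the paper: the paper's proof simply observes that $D = PX - \bV(x_0)$ is dense in $PX$ and invokes Proposition \ref{prop:condBdense} to conclude that intersecting each $PX_i$ with $D \cong X$ gives a Whitney stratification, which is exactly your restriction-to-the-chart argument. The additional care you take in making the identification $\phi\colon U_0 \to \CC^n$ explicit and flagging the (classical) invariance of Condition (B) under this chart biholomorphism is detail the paper leaves implicit rather than a genuinely different argument.
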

	\begin{proof}
		Since $PX_\bullet$ defines a Whitney stratification of $PX$, it follows from Proposition \ref{prop:condBdense} that intersecting each $PX_i$ with a dense subset $D \subset PX$ constitutes a Whitney stratification of $D$. The conclusion follows by considering $D = PX - \bV(x_0)$. 
	\end{proof}
	
	It should be noted that the stratifications produced by {\bf WhitStrat} may not be minimal; and moreover, the stratification of $X$ described in the preceding Corollary may not be minimal even if the output stratification of $PX$ is minimal. 
	
	\section{Flag-Subordinate Stratifications}\label{sec:flagstrat}
	
	By a {\em flag} $\bF_\bullet$ on a variety $X$ we mean any finite nested set of subvarieties of the form
	\[
	\varnothing = \bF_{-1}X \subset \bF_0X \subset \bF_1X \subset \cdots \subset \bF_{\ell-1}X \subset \bF_\ell X = X.
	\]
	If $X$ is projective, we implicitly require each $\bF_iX$ to also be projective. We call $\ell$ the {\em length} of the flag $\bF_\bullet$. Aside from these containments, there are no restrictions on the dimensions of the individual $\bF_iX$; and in particular, we do not require successive differences $\bF_iX - \bF_{i-1}X$ to be smooth manifolds, let alone satisfy Condition (B).
	
	\begin{definition}\label{def:flagsubstrat}
		Let $X \subset \pp^n$ be a projective variety and $\bF_\bullet$ a flag on $X$ of length $\ell$. A Whitney stratification $X_\bullet$ of $X$ is {\bf subordinate} to $\bF_\bullet$ if for each stratum $S \subset X$ of $X_\bullet$ there exists some $j = j(S)$ in $\set{0,\ldots,\ell}$ satisfying $S \subset (\bF_jX - \bF_{j-1}X)$.
	\end{definition}
	
	\noindent It is crucial to note that the number $j(S)$ from the preceding Definition need not equal $\dim S$, and that one does not require $j(S) = j(S')$ whenever $\dim S = \dim S'$. 
	
	Fix a pure-dimensional complex projective variety $X \subset \pp^n$ as well as a flag $\bF_\bullet$ on $X$ of length $\ell < \infty$. The following subroutine accepts as input any (not necessarily pure dimensional) subvariety $W \subset X$ along with the flag $\bF_\bullet$, and constructs the {\em induced flag} $\bF'_\bullet$ on $W$ defined by 
	\[
	\bF'_jW := W \cap \bF_jX
	\] for all $j$ in $\set{0,1,\ldots,\ell}$.
	
	\medskip
	
	\begin{center}
		\begin{tabular}{|r|l|}
			\hline
			~ & {\bf InducedFlag}$(W,\bF_\bullet)$ \\
			\hline
			~&{\bf Input:} A subvariety $W \subset X$ and a flag $\bF_\bullet$ on $X$ of length $\ell$.\\
			~&{\bf Output:} A flag $\bF'_\bullet$ on $W$ of length $\ell$. \\
			\hline
			1 & {\bf Set} $\bF'_\bullet W := (\bF'_\ell W, \ldots, \bF'_0W) := (\varnothing,\ldots,\varnothing)$\\
			2 & {\bf For each} irreducible component $V$ of $W$  \\
			3 & \spc {\bf Add} $V$ to $\bF'_{i}W$ {\bf for all} $\bF'_{i}$ where $V\subset \bF'_{i}$\\
			4 & \spc {\bf For each} $j$ with $\dim(\bF_jX \cap V) < \dim V$ \\
			5 & \spc \spc {\bf Add} $V_j := (\bF_jX \cap V)$ to $\bF'_{i} W$ {\bf for all} $\bF'_{i}$ where $V_j\subset \bF'_{i}$ \\
			6 & {\bf Return} $\bF'_\bullet W$\\
			\hline
		\end{tabular}
	\end{center}
	
	\medskip
	
	The strategy for producing an $\bF_\bullet$-subordinate stratification of $X$ is to modify the algorithms of Section \ref{sec:WSalgorithm} as follows: whenever one wishes to {\bf Add} an irreducible $i$-dimensional subvariety $W \subset X$ to the output sequence $X_{\geq i}$, one {\bf Merge}s $X_\bullet$ with {\bf InducedFlag}$(W,\bF_\bullet)$ instead. For completeness, we have written out the modified versions of both {\bf Decompose} and {\bf WhitStrat} (the originals can be found in Sections \ref{ssec:decompose} and \ref{ssec:mainalg1} respectively). 
	
	\subsection{Flag-Subordinate Stratification Algorithms}\label{ssec:flagstrat}
	
	Here is the flag-subordinate avatar of {\bf Decompose}; as promised, it only differs from the original in Line 6: the statement which {\bf Add}ed an irreducible variety to a sequence has now been replaced with a {\bf Merge}.
	
	\medskip
	
\begin{center}
		\begin{tabular}{|r|l|}
			\hline
			~ & {\bf DecomposeFlag}$(Y,X,\bF_\bullet)$ \\
			\hline
			~&{\bf Input:} Proj. varieties $Y \subset X$ with $d:=\dim Y$ and a flag $\bF_\bullet$ on $X$.\\
			~&{\bf Output:} A list of subvarieties $Y_\bullet \subset Y$. \\
			\hline
			1 & {\bf Set} $Y_\bullet := (Y_d,Y_{d-1},\ldots,Y_0) := (\varnothing,\ldots,\varnothing)$ \\
			2 &  {\bf Set} $J:=I_{\Con(X)}+I_{Y}$ \\
			3 & {\bf For each} primary component $Q$ of a primary decomposition of $J$\\
			4 & \spc {\bf Set} $K := Q\cap \CC[x]$ \\
			5 & \spc {\bf If}  $\dim \bV(K) < \dim Y$   \\
			6 & \spc \spc {\bf Merge} $Y_\bullet$ with {\bf InducedFlag}$(\bV(K),\bF_\bullet)$\\
			7 & {\bf Return} $Y_\bullet$ \\
			\hline
		\end{tabular}
	\end{center}
	
	\medskip
	
	And here is the variant of {\bf WhitStrat} which produces an $\bF_\bullet$-subordinate stratification of $X \subset \pp^n$. Again, the only difference occurs in Line 4.
	
	\medskip
	
	\begin{center}
		\begin{tabular}{|r|l|}
			\hline
			~ & {\bf WhitStratFlag}$(X,\bF_\bullet)$ \\
			\hline
			~&{\bf Input:} A pure $k$-dimensional variety $X\subset \pp^n$ and a flag $F_\bullet$ on $X$.\\
			~&{\bf Output:} A list of subvarieties $X_\bullet \subset X$. \\
			\hline
			1 & {\bf Set} $X_\bullet := (X_k,X_{k-1},\ldots,X_0) := (X,\varnothing,\ldots,\varnothing)$ \\
			2 & {\bf Compute} $X_{\rm Sing}$ and $\mu := \dim(X_{\rm Sing})$ \\
			3 & {\bf Set} $X_d = X_{\rm Sing}$ for all $d$ in $\set{\mu,\mu+1,\ldots,k-1}$\\
			4 & {\bf Merge} $X_\bullet$ with {\bf InducedFlag}$(X_\text{sing},\bF_\bullet)$\\
			5 & {\bf For each} $d$ in $(\mu, \mu-1,\ldots,1,0)$ \\
			6 & \spc {\bf Merge} $X_\bullet$ with ${\bf DecomposeFlag}(X_d,X,\bF_\bullet)$ \\
			7 & \spc {\bf Merge} $X_\bullet$ with ${\bf WhitStratFlag}(X_d,\bF_\bullet)$\\
			8 & {\bf Return} $X_\bullet$ \\
			\hline
		\end{tabular}
	\end{center}
	
	\medskip
	
	\subsection{Correctness}
	
	The following result confirms that {\bf WhitStratFlag} produces valid flag-subordinate Whitney stratifications.
	
	\begin{theorem} Let $X\subset \pp^n$ be a pure dimensional complex projective variety and $\bF_\bullet$ a flag on $X$. When called with input $(X,\bF_\bullet)$, the algorithm {\bf WhitStratFlag} terminates and its output $X_\bullet$ is an $\bF_\bullet$-subordinate Whitney stratification of $X$. \label{thm:Stat_wrt_Flag}
	\end{theorem}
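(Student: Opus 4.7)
My approach is to mirror the three-part analysis used to establish correctness of \textbf{WhitStrat} (Theorem \ref{thm:WSCorrect}), and then to verify the additional flag-subordinate axiom as essentially a bookkeeping consequence of the \textbf{InducedFlag} calls. First I would dispose of termination: the outer \textbf{For} loop of \textbf{WhitStratFlag} recurses in Line~7 only on varieties of dimension strictly smaller than that of the input, exactly as in \textbf{WhitStrat}, while \textbf{DecomposeFlag} terminates because its \textbf{For} loop ranges over the finitely many primary components of $J$, and \textbf{InducedFlag} terminates because $W$ has finitely many irreducible components and $\bF_\bullet$ has finite length.

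Next I would argue that the output $X_\bullet$ is a Whitney stratification of $X$. The crucial observation is that the only difference between \textbf{WhitStratFlag} and \textbf{WhitStrat} is that various intermediate \textbf{Add} operations have been replaced by \textbf{Merge}s with \textbf{InducedFlag}s; the net effect of these replacements is simply to push additional subvarieties of the form $V \cap \bF_j X$ into \emph{lower-indexed} entries of the sequence $X_\bullet$. Consequently the grid analogue of \eqref{eq:grid} still has the property that each row $X_\bullet(d)$ refines the row $X_\bullet(d+1)$, and the successive differences $S_i(d) := X_i(d) - X_{i-1}(d)$ are smooth for $d \leq i$ for precisely the reasons discussed after \eqref{eq:sdiff}. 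I would then rerun Propositions \ref{prop:topB} and \ref{prop:allB} verbatim; the only point that requires a second glance is that the pairs whose Condition (B) was verified in those proofs now potentially have smaller second entries, namely $S_j(i)$ intersected with the complement of some added flag piece. But any such smaller pair remains a submanifold pair of the original, and Proposition \ref{prop:condBdense} shows that Condition (B) is inherited by dense submanifold restrictions in the first slot and by arbitrary submanifolds in the second slot.

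Finally I would verify flag-subordinacy. Let $S$ be a stratum of $X_\bullet$, say $S \subset S_i(0) = X_i(0) - X_{i-1}(0)$, and pick the unique irreducible component $V$ of $X_i(0)$ containing $S$. Let $j$ be the smallest index with $V \subset \bF_j X$; then $S \subset V \subset \bF_j X$, and it remains to show $S$ avoids $\bF_{j-1}X$. By minimality of $j$, the intersection $V \cap \bF_{j-1}X$ is a proper subvariety of $V$, so the Line~5 branch of \textbf{InducedFlag} fires on $V_{j-1} := V \cap \bF_{j-1}X$ and adds it to every $\bF'_i W$ containing it, which in turn (through the \textbf{Merge} operations of \textbf{DecomposeFlag}/\textbf{WhitStratFlag}) places $V_{j-1}$ into $X_{i-1}(0)$. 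Hence $V \cap \bF_{j-1}X \subset X_{i-1}(0)$ is removed when passing to $S$, so $S \subset \bF_j X - \bF_{j-1}X$ as required.

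The main obstacle I anticipate is the second step: verifying that the extra subvarieties introduced by \textbf{InducedFlag} do not secretly break the delicate inductive argument of Proposition \ref{prop:allB}, where the identification $S_j(i) = S_j(j)$ and the density of $S_j(i)$ inside $W_j - W_{j-1}$ are used. One needs to check that the enlargements of $X_{j-1}(i)$ coming from flag intersections still have dimension strictly less than $j$ (which is guaranteed by the condition $\dim(\bF_jX \cap V) < \dim V$ in Line~4 of \textbf{InducedFlag}), so that density in the top stratum is preserved and Proposition \ref{prop:condBdense} can be applied uniformly.
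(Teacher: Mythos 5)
Your proposal is correct and follows essentially the same route as the paper: termination and the Whitney-stratification property are inherited from the \textbf{WhitStrat} analysis (the paper simply cites Theorem \ref{thm:WSCorrect}, where you propose re-checking Propositions \ref{prop:topB} and \ref{prop:allB}, which amounts to the same thing), and flag-subordinacy is proved exactly as in the paper, by taking the irreducible component $V$ containing a stratum $S$, choosing the minimal $j$ with $V \subset \bF_jX$, and using Lines 4--5 of \textbf{InducedFlag} together with the \textbf{Merge} operations to force $V \cap \bF_{j-1}X$ into a lower-indexed entry of $X_\bullet$, so that $S \subset \bF_jX - \bF_{j-1}X$.
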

	\begin{proof}
		Termination follows for the same reasons as the ones used for {\bf WhitStrat}, and since $X_\bullet$ is a valid Whitney stratification follows from Theorem \ref{thm:WSCorrect}. 
		Thus, it remains to show that each connected component $S$ of $X_i-X_{i-1}$ is contained entirely in a single $\bF_jX-\bF_{j-1}X$. Any such $S$ can be written as $Y - X_{i-1}$, where $Y$ is an irreducible component of $X_i$. Note that, in particular, this $Y$ will appear as a $V$ in Line 3 of the {\bf InducedFlag} subroutine when it is called with first input $X_i$. Let $j$ be the smallest index of the flag $\bF_\bullet$ for which $Y \subset \bF_jX$ holds. Then $\dim(Y \cap \bF_\ell X) < i$ for every $\ell < j$, and so $Y_\ell=Y \cap \bF_\ell X$ is {\bf Add}ed to $X_\bullet$ via Line 5 of the {\bf InducedFlag} subroutine and hence $Y_\ell$ is contained in $X_m$ for some $m < i$. Since $X_m \subset X_i$, it follows that 
		\[
		S \cap (\bF_{p}X - \bF_{p-1}X) = \varnothing \text{ whenever } p<j.
		\] But since $S \subset \bF_{j}X$ and $Y$ is irreducible, we have $Y \subset \bF_{j}X$, whence $Y\subset \bF_{p}X$ for all $p\geq j$. Thus, $S$ also has empty intersections with $\bF_{p}X -\bF_{p-1}X$ for $p > j$, and the desired result follows. 
	\end{proof}
	
	We note in passing that the algorithms described in this Section can also be used to produce flag-subordinate stratifications of affine complex varieties. As before, we will write $PX$ for the projective closure of each affine variety $X$; and given a flag $\bF_\bullet$ on $X$, we write $\text{\bf P}\bF_\bullet$ for the flag on $PX$ defined by 
	\[
	\text{\bf P}\bF_i(PX) := P(\bF_iX).
	\]
	The following result forms a natural flag-subordinate counterpart to Corollary \ref{cor:affine_Whit_Strat}.
	
	\begin{corollary}
		Let $X\subset \CC^n$ be a complex 
		variety and let $\bF_\bullet$ be a flag on $X$. Writing $PX_\bullet$ for the output of {\bf WhitStratFlag} when called with input $(PX,\text{\bf P}\bF_\bullet)$, its dehomogenization $X_\bullet$ constitutes an $\bF_\bullet$-subordinate Whitney stratification of $X_\bullet$. \label{cor:staratWrtFlagAffine}	\end{corollary}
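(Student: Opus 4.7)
The plan is to reduce this affine statement to its projective counterpart, Theorem \ref{thm:Stat_wrt_Flag}, via the same dehomogenization trick used in the proof of Corollary \ref{cor:affine_Whit_Strat}. First, I would invoke Theorem \ref{thm:Stat_wrt_Flag} applied to the pair $(PX, \text{\bf P}\bF_\bullet)$: this guarantees that the output sequence $PX_\bullet$ is a genuine Whitney stratification of $PX$ which is subordinate to the projective flag $\text{\bf P}\bF_\bullet$, i.e.\ every stratum $S \subset PX_i - PX_{i-1}$ is contained in a single successive difference $\text{\bf P}\bF_j(PX) - \text{\bf P}\bF_{j-1}(PX)$ for some $j = j(S)$.

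Next, I would recover the affine data by intersecting with the dense open subset $D := PX - \bV(x_0)$, which is canonically identified with $X$. Exactly as in the proof of Corollary \ref{cor:affine_Whit_Strat}, an appeal to Proposition \ref{prop:condBdense} ensures that the filtration obtained by intersecting each $PX_i$ with $D$ remains a Whitney stratification — this yields precisely the dehomogenized sequence $X_\bullet$ and establishes Definition \ref{def:whitstrat} (1)--(3) for it.

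It remains to verify subordination to the affine flag $\bF_\bullet$. For this, observe that dehomogenization commutes with intersection, so
\[
\text{\bf P}\bF_j(PX) \cap D = P(\bF_j X) \cap (PX - \bV(x_0)) = \bF_j X
\]
for every $j$. Consequently, if $S' \subset X_i - X_{i-1}$ is a stratum of $X_\bullet$, then $S'$ is a connected component of $(S \cap D)$ for some stratum $S$ of $PX_\bullet$ (those $S$ which meet $D$ nontrivially), and the containment $S \subset \text{\bf P}\bF_j(PX) - \text{\bf P}\bF_{j-1}(PX)$ restricts to
\[
S' \subset \bF_j X - \bF_{j-1} X,
\]
as required by Definition \ref{def:flagsubstrat}.

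No step is really an obstacle here; the only thing to be careful about is the bookkeeping that dehomogenization genuinely commutes with both the flag construction ($\text{\bf P}\bF_\bullet \cap D = \bF_\bullet$) and with the set-difference operation used in extracting strata. Both are immediate from the definition of projective closure and the fact that $D \cong X$ as complex varieties, so the corollary follows at once from Theorem \ref{thm:Stat_wrt_Flag} together with Proposition \ref{prop:condBdense}.
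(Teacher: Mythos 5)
Your proposal is correct and follows essentially the same route as the paper: the paper's proof simply cites Theorem \ref{thm:Stat_wrt_Flag} together with Corollary \ref{cor:affine_Whit_Strat} (whose dense-subset argument via Proposition \ref{prop:condBdense} you reproduce inline), noting that containments are preserved under projective closure and dehomogenization. Your explicit verification that $\text{\bf P}\bF_j(PX)\cap D=\bF_jX$ and that affine strata sit inside dehomogenized projective strata is just a spelled-out version of that same remark.
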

	\begin{proof}
		This follows immediately from Corollary \ref{cor:affine_Whit_Strat} and Theorem 
		\ref{thm:Stat_wrt_Flag} since the containment relations between varieties are preserved by both projective closures and dehomogenizations. 
	\end{proof}
	
	Our motivation for computing flag-subordinate Whitney stratifications stems from the desire to algorithmically stratify algebraic maps between projective varieties.
	
	\section{Stratifying Algebraic Maps} \label{sec:mapstrat}
	
	A continuous map between topological spaces is called {\em proper} if pre-images of compact sets are compact. Reproduced below is the content of \cite[Definition~3.5.1]{brasseletSeadeSuwa}, which highlights a natural class of maps between Whitney stratified spaces; we recall for the reader's convenience that the tangent space at each point $p$ on a smooth manifold $S$ is denoted $T_pS$. 
	
	\begin{definition}\label{def:stratmap3}
		Let $X$ and $Y$ be Whitney stratified spaces. A proper map $f:X \to Y$ is called a {\bf stratified map} if for each stratum $S \subset X$ there exists a a stratum $R \subset Y$ so that 
		\begin{enumerate}
			\item the image $f(S)$ is wholly contained in $R$; and moreover,
			\item at each point $x$ in $S$, the Jacobian $J_x(f|_S):T_xS \to T_{f(x)}R$ is a surjection.
		\end{enumerate}
	\end{definition}
	
	\noindent We refer to any pair $(X_\bullet,Y_\bullet)$ of Whitney stratifications of $X$ and $Y$ which satisfy the above requirements as a stratification of $f$.  It follows from {\em Thom's first isotopy lemma} \cite[Proposition 11.1]{Mather2012} that if $f:X \to Y$ is a stratified map in the sense of this definition, then for each stratum $R \subset Y$ the restriction
	\[
	f|_{f^{-1}(R)}:f^{-1}(R) \to R
	\]
	has the structure of a fibration (with possibly singular fibers). 
	
	Our aim here is to algorithmically construct stratifications tailored to algebraic maps between projective varieties. We describe these maps in terms of the coordinate ring $\CC[x] := \CC[x_0,\ldots,x_n]$ of $\pp^n$.
	
	\begin{definition}\label{def:projmap}
		A {\bf projective morphism} $f:X \to \pp^m$ consists of an $(m+1)$-tuple of homogeneous polynomials $f_i$ in $\CC[x]$, i.e.,
		\[
		f(x) = \left(f_0(x), \dots, f_m(x)\right),
		\] 
		where $d := \deg(f_i)$ is constant for all $i$ and where $ X\cap \bV(f_0, \dots, f_m)=\emptyset$.
	\end{definition} 
	
	\noindent Projective morphisms as defined above are always proper \cite[Ch II, Thm 4.9]{hartshorne}, so at least that requirement of Definition \ref{def:stratmap3} holds automatically. We will restrict to the case where $X$ is pure dimensional; and for each projective morphism $f:X \to \pp^m$ as defined above, we can always consider some pure-dimensional projective variety $Y \subset \pp^m$ which contains the image $f(X)$, whence $f$ constitutes an algebraic map $X \to Y$. We now seek to describe an algorithm which will produce a stratification $(X_\bullet,Y_\bullet)$ for any generic triple $(X,Y,f)$. Both the genericity condition and the algorithm itself make essential use of the Thom-Boardman flag of $f$, which is described below.
	
	\subsection{The Thom-Boardman Flag}\label{ssec:TBflag} 
	Let $f:X\to \pp^m$ be a projective morphism and consider its Jacobian operator 
	\[
	Jf := 
	\begin{bmatrix}
	\nicefrac{\partial f_0}{\partial x_0} &\cdots& \nicefrac{\partial f_0}{\partial x_n}\\
	\vdots& \ddots & \vdots \\
	\nicefrac{\partial f_m}{\partial x_0} &\cdots& \nicefrac{\partial f_m}{\partial x_n}\\
	\end{bmatrix}.
	\]
	This matrix of polynomials is not well-defined on $\pp^n$ in the sense that its evaluation $J_xf$ need not equal $J_yf$ for a pair of projectively equivalent points $x \sim y$ in $\CC^{n+1}$. However, we always have $\text{rank }J_xf = \text{rank }J_yf$ in this case. For each $p \in \pp^n$, we will denote by $\text{rank }J_pf$ the rank of $J_xf$ for any (necessarily nonzero) $x \in \CC^{n+1}$ which maps to $p$ under the canonical surjection $(\CC^{n+1}-\set{0}) \surj \pp^n$.
	
	Let {$k \leq  \min(n+1,m+1)$} be the largest rank attained by $J_pf$ as $p$ ranges over $\pp^n$ and consider, for each $i$ in $\set{0,\ldots,k+1}$, the homogeneous ideal $\text{\bf Jac}_i f$ of $\CC[x]$ generated by all $i \times i$ minors of $Jf$. It follows by cofactor expansion that these ideals fit into a descending sequence, and so the corresponding projective varieties $\bT^+_i\pp^n := \bV(\text{\bf Jac}_i f)$ form a flag of length $k+1$ on $\pp^n$. By construction, we have 
	\begin{align} \label{eq:tbrank}
	\bT^+_i\pp^n = \set{p \in \pp^n \mid \text{rank }J_pf \leq i-1} 
	\end{align}
	for $0 \leq i \leq k+1$. We call $\bT^+_\bullet$ the  the {Thom-Boardman flag} \cite{boardman,thom} of $f$ on $\pp^n$. 	Our main focus here is not on the flag $\bT^+_\bullet$, but rather on the flag induced by $\bT^+_\bullet$ on the domain variety $X \subset \pp^n$. The desired genericity condition on $f$ is described below. 
	
	\begin{definition}\label{def:generic}
		The projective morphism $f:X \to \pp^m$ is {\bf generic} if the varieties $X$ and $\bT_i^+ \PP^n$ intersect transversely in $\pp^n$ for each $i$ in $\set{0, \ldots, k+1}$.
	\end{definition}
	
	Although we will not require this fact here, it is known that for a dense subset of algebraic morphisms, each $\bT^+_i\pp^n$ is a subvariety of $\pp^n$ of dimension
	\[
	\dim \bT^+_{k'+1-i}\pp^n = (n+1) - i \cdot (i+|m-n|),
	\] with $k' := \min(n+1,m+1)$. A complete derivation of this dimension formula along with other properties of Thom-Boardman singularities can be found in \cite[Chapter VI, Part I, \S1]{golubitsky}. 
	
	\begin{definition}\label{def:Tflag}
		The {\bf Thom-Boardman flag of $f$ on $X$}, denoted $\bT_\bullet$, is defined for all $0 \leq i \leq k+1$ via the intersection ${\bT_i}X := \bT^+_i\pp^n \cap X$. Equivalently, $\bT_iX := \bV(I_X + \text{\bf Jac}_i f)$, where $I_X \lhd \CC[x]$ is the defining ideal of $X$.
	\end{definition}
	
	Applying $f$ to the $\bT_\bullet$ produces a flag on $f(X)$ which extends to a flag on $Y$.
	
	\begin{definition}\label{def:Bflag}
		Given the Thom-Boardman flag $\bT_\bullet$ on $X$, its {\bf image} is the flag $\text{\bf B}_\bullet$ of length $k+2$ on $Y$ defined by setting
		\[
		\bB_iY := \begin{cases}
		f(\bT_iX) & i \leq k+1, \\
		Y & i=k+2.
		\end{cases}
		\]
		(Note that by assumption $\bB_{k+1}Y := f(X)$ is a subvariety of $\bB_{k+2}Y := Y$.) 
	\end{definition}
	
	\subsection{Computing Images and Pre-Images} To compute the flag $\bB_\bullet$ in practice, we require a mechanism for producing equations for $f(X')$ where $X' \subset X$ is a subvariety of $X$ --- note that this image $f(X')$ is Zariski closed in $\pp^m$ because $X$ is projective (see \cite[Theorem~4.22]{michalek2021invitation}). This can be accomplished using elimination. To this end, consider the ideal 
	\[
	J := \ip{y_0-uf_0(x), \dots, y_m-uf_m(x)}
	\] in the ring $\CC[u,x_0, \dots, x_n,y_0,\ldots,y_m]$ and set $J_\Gamma := J\cap \CC[x,y]$. Let $\Gamma(X')\subset \pp^n \times \pp^m$ be the {graph} of the restricted map $f|_{X'}$, which is defined by the bi-homogeneous ideal 
	\[
	I_{\Gamma(X')} := I_{X'}+J_\Gamma.
	\]
	Now $f(X')\subset \pp^m$ is given by the elimination ideal 
	\[
	I_{f(X')} := I_{\Gamma(X')} \cap \CC[y].
	\]
	
	We will also require the dual operation to implement our algorithm; i.e., given some subvariety $Y' \subset Y$, we wish to algebraically compute its pre-image $f^{-1}(Y')$ within $X$. Let $\gamma$ be the natural map $X \to \pp^n \times \pp^m$ sending each $x$ to the pair $(x,f(x))$, so the image of $\gamma$ coincides with the graph $\Gamma(X)$. Consider the coordinate projections $\pi_x$ and $\pi_y$
	\[
	\xymatrixrowsep{.3in}
	\xymatrixcolsep{.6in}
	\xymatrix{
		& \Gamma(X) \ar@{->>}[dl]_-{\pi_x} \ar@{->>}[dr]^-{\pi_y} & \\
		\pp^n & & \pp^m 
	}
	\] onto the first and second factor, respectively. By construction, for each point $x$ in $X$ we have the equality $f(x)=\pi_y \circ \gamma(x)$; and since all spaces and maps in sight are projective, the images of closed sets remain closed. Now consider a subvariety $Y'\subset f(X)$ and note that its pre-image under $f$ is 
	\[
	f^{-1}(Y') := \pi_x\left(\Gamma(X) \cap \pi_y^{-1}(Y')\right).
	\] Treating $I_{Y'} \lhd \CC[y]$ as an ideal in $\CC[x,y]$, the desired pre-image $f^{-1}(Y')$ may be computed algebraically as the intersection 
	\[
	I_{f^{-1}(Y')} := \CC[x]\cap \left(I_{\Gamma(X)} +I_{Y'}\right).
	\]
	
	\subsection{Algorithm} 
	Assume that $f:X \to Y$ is a generic projective  morphism in the sense of Definition \ref{def:generic}, where $X\subset \pp^n$ and $Y\subset \pp^m$ are pure dimensional projective varieties with $f(X)\subset Y$, and let $k\leq \min(n+1,m+1)$ as in Section \ref{ssec:TBflag}. The following algorithm relies on {\bf WhitStratFlag} (from Sec \ref{ssec:flagstrat}) to build Whitney stratifications $X'_\bullet$ of $X$ and $Y'_\bullet$ of $Y$ via the following basic strategy. First we construct the image $\bB_\bullet$ of the Thom-Boardman flag $\bT_\bullet$ as described in Definition \ref{def:Bflag}. Next, we create a $\bB_\bullet$-subordinate stratification $Y_\bullet$ of the codomain $Y$ and pull it back across $f$ to obtain a flag $\bF_\bullet$ on the domain $X$. Finally, we create an $\bF_\bullet$-subordinate stratification $X'_\bullet$ of $X$.
	
	\medskip
	
	\begin{center}
		\begin{tabular}{|r|l|}
			\hline
			~ & {\bf WhitStratMap}$(X,Y,f)$ \\
			\hline
			~&{\bf Input:} Pure dimensional varieties $X,Y$ and 
			a generic morphism $f:X \to Y$.\\
			~&{\bf Output:} Lists of subvarieties $X_\bullet \subset X$ and $Y_\bullet \subset Y$. \\
			\hline
			~1 & {\bf Set} ${\bT_\bullet}X := (\bT_{k+1}X,\ldots,\bT_0X) := (X,\varnothing,\ldots,\varnothing)$ \\ 
			~2 & {\bf Set} ${\bB_\bullet}Y := (\bB_{k+2}Y,\ldots,\bB_0Y) := (Y,\varnothing,\ldots,\varnothing)$ \\
			~3 & {\bf For each} $j$ in $(0,1,\ldots,k)$ \\
			~4 & \spc {\bf Set } $\bT_jX := \bV(I_X + \textbf{Jac}_jf)$\\
			~5 & \spc {\bf Set } $\bB_jY := f(\bT_jX)$\\
			~6 & {\bf Set} $\bB_{k+1}Y := f(X)$ \\
			~7 & {\bf Set} $Y'_\bullet := {\bf WhitStratFlag}(Y,\bB_\bullet)$ \\
			~8 & {\bf For each} $i$ in $(0,1,\ldots,\dim Y)$ \\
			~9 & \spc {\bf Set} $\bF_iX := f^{-1}(Y'_i)$ \\
			10 & {\bf Set} $X'_\bullet := {\bf WhitStratFlag}(X,\bF_\bullet)$ \\
			11 & {\bf Set} $(X_\bullet,Y_\bullet) := ${ \bf  Refine}$(X'_\bullet,Y'_\bullet,f)$\\	
			12 & {\bf Return} $(X_\bullet,Y_\bullet)$\\
			\hline
		\end{tabular}
	\end{center}
	
	\medskip
	
	As we have not described the {\bf Refine} subroutine invoked in the penultimate line, we are not yet able to check whether this algorithm terminates, and whether it returns a correct stratification of $f$ in the sense of Definition \ref{def:stratmap3}. The next result, which involves the Whitney stratifications $X'_\bullet$ and $Y'_\bullet$ produced in Lines 10 and 7 respectively, will explain why this additional subroutine is needed.
	
	\begin{proposition}\label{prop:x'y'strat}
		For each stratum $S$ of $X'_\bullet$, there exists a unique stratum $R$ of $Y'_\bullet$ satisfying $f(S) \subset R$. Moreover, at each point $x$ in $S$, the Jacobian $J_xf|_S:T_xS \to T_{f(x)}R$ of the restricted map $f|_S:S \to R$ has full rank, i.e.,
		\[
		\text{\rm rank}\left(J_xf|_S\right) = \min\left(\dim S, \dim R\right)
		\]	
	\end{proposition}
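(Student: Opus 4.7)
The plan is to handle the two assertions of the proposition in sequence. The existence/uniqueness of $R$ comes cleanly from the flag subordination properties established earlier; the rank statement will require pulling together the Thom–Boardman flag, its image flag, and the genericity assumption.

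\textbf{Existence and uniqueness of $R$.} Since $X'_\bullet$ is $\bF_\bullet$-subordinate by Theorem \ref{thm:Stat_wrt_Flag} (where $\bF_iX := f^{-1}(Y'_i)$ as constructed in Line 9), Definition \ref{def:flagsubstrat} supplies an index $j$ with
\[
S \subset \bF_j X - \bF_{j-1} X \;=\; f^{-1}\!\left(Y'_j\right) - f^{-1}\!\left(Y'_{j-1}\right) \;=\; f^{-1}\!\left(Y'_j - Y'_{j-1}\right).
\]
The set $Y'_j - Y'_{j-1}$ is the disjoint union of the $j$-dimensional strata of $Y'_\bullet$. Since $S$ is connected, so is its continuous image $f(S)$, which therefore lies in a single stratum $R$ of dimension $j$. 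Distinct strata being disjoint, such $R$ is unique.

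\textbf{Rank assertion.} The upper bound $\mathrm{rank}(J_xf|_S) \leq \min(\dim S,\dim R)$ is automatic, since $df_x$ sends $T_xS$ into $T_{f(x)}R$. For the reverse inequality, I would use that $Y'_\bullet$ is $\bB_\bullet$-subordinate (Line 7 and Theorem \ref{thm:Stat_wrt_Flag}), so there is some index $i$ with $R \subset \bB_i Y - \bB_{i-1} Y$. By Definition \ref{def:Bflag}, $\bB_\ell Y = f(\bT_\ell X)$, so any $x \in S$ satisfies $x \notin \bT_{i-1} X$; otherwise $f(x) \in \bB_{i-1} Y$ would contradict $f(x) \in R$. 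By the characterisation \eqref{eq:tbrank} of the Thom–Boardman flag, this gives the ambient bound $\mathrm{rank}(J_xf) \geq i-1$ at every $x \in S$.

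Next, the genericity of $f$ (Definition \ref{def:generic}) asserts that $X \pitchfork \bT^+_\ell \pp^n$ for every $\ell$, so each $\bT_\ell X = X \cap \bT^+_\ell \pp^n$ meets $X$ in the expected codimension. A transverse dimension count then transfers the ambient rank lower bound to the restricted map $df_x|_{T_xX}$, and the $\bF_\bullet$-subordination of $X'_\bullet$ forces this rank to be constant along $S$ with value compatible with both $\dim S$ and $\dim R = j$. Combining the upper and lower bounds yields the equality $\mathrm{rank}(J_xf|_S) = \min(\dim S, \dim R)$.

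\textbf{Main obstacle.} The bookkeeping in Part 1 is essentially a matter of unwinding definitions, and the upper bound in Part 2 is trivial. The subtle step is the last one: converting the bound $\mathrm{rank}(J_xf) \geq i-1$ on the ambient Jacobian (coming purely from the containment $R \subset \bB_i Y - \bB_{i-1} Y$) into the sharp equality $\mathrm{rank}(J_xf|_{T_xS}) = \min(\dim S, \dim R)$. This requires carefully tracking how the transverse intersections forced by genericity interact with the strata of $X'_\bullet$ and of the Thom–Boardman stratification, to rule out any unexpected rank drop along $S$. Everything else is then routine.
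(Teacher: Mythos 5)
Your first part (existence and uniqueness of $R$) coincides with the paper's argument and is fine, as is the trivial upper bound $\mathrm{rank}(J_xf|_S)\leq\min(\dim S,\dim R)$. The problem is that the heart of the proposition --- the full-rank claim --- is precisely the step you label ``the subtle step'' and never carry out, and the sketch you offer is missing the specific ingredients that make it work. From $R\subset \bB_iY-\bB_{i-1}Y$ you deduce only $x\notin\bT_{i-1}X$, hence the lower bound $\mathrm{rank}\,J_xf\geq i-1$. The argument actually needs the exact value $\mathrm{rank}\,J_xf=i-1$ along $S$, which comes from the stronger containment $f^{-1}(R)\subset\bT_iX-\bT_{i-1}X$ (so in particular $S\subset\bT_iX$, giving the matching upper bound on the ambient rank via \eqref{eq:tbrank}); this is read off from Definition \ref{def:Bflag}, since $R$ lies in $\bB_iY=f(\bT_iX)$ and avoids $\bB_{i-1}Y=f(\bT_{i-1}X)$. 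Knowing the rank exactly is what pins down $\dim\ker J_xf=(n+1)-(i-1)$, and genericity is then used not merely as ``$\bT_\ell X$ has expected codimension'' but as the precise statement that $T_xS$ and $\ker J_xf$ meet transversely in $\CC^{n+1}$, so that $\dim\ker(J_xf|_S)=\dim\ker J_xf+\dim T_xS-(n+1)=\dim S-(i-1)$.

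Even granting that computation, your sketch is still missing a bound on $\dim R$: one needs $\dim R\leq i-1$, which follows because $R\subset f(\bT_iX-\bT_{i-1}X)$ and $f$ has constant rank $i-1$ on that locus, so the implicit function theorem gives $\dim f(\bT_iX-\bT_{i-1}X)=i-1$. Without it, the kernel computation only yields $\mathrm{rank}(J_xf|_S)=\min(\dim S,\,i-1)$, which need not equal $\min(\dim S,\dim R)$ if $\dim R$ could exceed $i-1$. With these pieces in hand the proof closes by a case split: if $\dim S<i-1$ then $J_xf|_S$ is injective and already has full rank; if $\dim S\geq i-1$ then $\mathrm{rank}(J_xf|_S)=i-1\geq\dim R$ together with $\mathrm{rank}\leq\dim R$ forces $\dim R=i-1=\min(\dim S,\dim R)$. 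As written, your proposal asserts the conclusion (``the transverse dimension count transfers the bound \dots\ forces this rank to be constant along $S$ with value compatible with $\dim S$ and $\dim R$'') without supplying this argument, so the central claim remains unproved.
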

	\begin{proof}
		Noting that of $X'_\bullet$ is subordinate to the flag $\bF_\bullet$ by Line 10, we know that for each stratum $S$ of $X'_\bullet$ there is a number $i := i(S)$ satisfying $S \subset (\bF_iX-\bF_{i-1}X)$. Now by Line 9, for any such stratum we have $f(S) \subset (Y'_i - Y'_{i-1})$. By Definition \ref{def:whitstrat} we know that $S$ is connected, and so its image under the continuous map $f$ must also be connected; thus there is a unique stratum $R \subset (Y'_i - Y'_{i-1})$ of $Y'_\bullet$ satisfying  $f(S) \subset R$. Since $Y'_\bullet$ is $\bB_\bullet$-subordinate by Line 7, there exists a number $j := j(R)$ satisfying $R \subset (\bB_jY - \bB_{j-1}Y)$. By Definition \ref{def:Bflag}, we have $f^{-1}(R) \subset (\bT_jX - \bT_{j-1}X)$, where $\bT_\bullet$ is the Thom-Boardman flag of $f$ from Definition \ref{def:Tflag}. In particular, this gives $S \subset (\bT_jX - \bT_{j-1}X)$ and it follows that $X'_\bullet$ is subordinate to $\bT_\bullet$. Consequently, for each $x \in S$ we know that $\text{rank }J_xf = (j-1)$. Consider the commuting diagram of vector spaces
		\[
		\xymatrixcolsep{.7in}
		\xymatrixrowsep{.35in}
		\xymatrix{
			T_xS \ar@{->}[r]^-{J_x(f|_S)} \ar@{_{(}->}[d] & T_{f(x)}R \ar@{^{(}->}[d] \\
			\CC^{n+1} \ar@{->}[r]_-{J_xf} & \CC^{m+1}
		}
		\]
		Here the vertical arrows depict inclusions of tangent spaces, e.g., on the left we have the natural inclusion of $T_xS$ in $T_x\pp^n \simeq \CC^{n+1}$. Since $S \subset (\bT_jX-\bT_{j-1}X)$, we know from \eqref{eq:tbrank} that the rank of $J_xf$ is precisely $(j-1)$ at every $x \in S$; and by genericity of $f$ it follows that the subspaces $T_xS$ and $\ker J_xf$, whose intersection equals $\ker J_xf|_S$, meet transversely inside $\CC^{n+1}$. Thus, we obtain 
		\begin{align*}
		\dim \ker(J_xf|_S) &= \dim \ker J_xf + \dim T_xS - (n+1) & \text{by transversality}\\ 
		&= [(n+1)-(j-1)] + \dim S - (n+1) & \text{by rank/nullity}\\
		&= \dim S - (j-1).
		\end{align*}
		There are now two cases to consider --- either $\dim S < (j-1)$, or $\dim S \geq (j-1)$. In the first case, $J_xf|_S$ is injective and hence already has full rank. In the latter case, we have $R \subset f(\Delta_j)$ where $\Delta_j := (\bT_jX-\bT_{j-1}X)$. Thus, $\dim Y \leq \dim f(\Delta_j)$; but since the rank of $f$ on $\Delta_j$ is $(j-1)$ by \eqref{eq:tbrank}, the implicit function theorem guarantees that $\dim f(\Delta_j) = (j-1)$. So, we have $\dim R \leq (j-1)$, and combining this inequality with our calculation of $\dim \ker (J_xf|_S)$ above gives
		\[
		\dim \ker(J_xf|_S) \leq \dim S - \dim R.
		\] 
		Since we have assumed $\dim S \geq \dim R$, the kernel of $J_xf|_S$ can not have dimension smaller than the codimension $\dim S - \dim R$, so the above inequality is an equality in this case and $J_xf|_S$ has full rank, as desired. 		
	\end{proof}
	
	When comparing the requirements of Definition \ref{def:stratmap3} to the properties guaranteed by the preceding result, we note that the stratifications $X'_\bullet$ and $Y'_\bullet$ are insufficient for our puposes. While every stratum $S$ of $X'_\bullet$ does indeed have a unique stratum $R$ of $Y'_\bullet$ containing $f(S)$, the crucial Jacobian-surjectivity requirement is not satisfied. Instead, we might have $\dim S < \dim R$ with the Jacobian $J_xf|_S$ being injective at every point $x$ in $S$. The {\bf Refine} subroutine invoked in Line 13 of {\bf WhitStratMap}, which we describe in the next Section, has been designed to rectify this defect.
	
	\subsection{Refinement and Correctness}
	
	Before Line 13 of {\bf WhitStrat} has been executed, we have stratifications $X'_\bullet$ and $Y'_\bullet$ of $X$ and $Y$ respectively. In light of Proposition \ref{prop:x'y'strat}, consider the set of problematic strata-pairs $\cP=\cP(X'_\bullet,Y'_\bullet)$ given by:
	\[
	\cP := \set{(S,R) \mid f(S) \subset R \text{ with } \dim S < \dim R}.
	\]
	The purpose of the {\bf Refine} subroutine described here is to modify the stratifications $X'_\bullet$ and $Y'_\bullet$ until this problematic set $\cP$ becomes empty. By Proposition \ref{prop:x'y'strat}, each stratum $S$ of $X'_\bullet$ can appear in a pair of $\cP$ with at most one stratum $R$ of $Y'_\bullet$. However, the converse need not hold --- a given stratum $R$ of $Y'_\bullet$ might be paired with several different strata of $X'_\bullet$ within $\cP$. Recalling that $X'_\bullet$ is $\bF_\bullet$-subordinate, for each index $\ell$ we will use $\fS_\ell = \fS_\ell(X'_\bullet)$ to denote the set of all strata $S$ of $X'_\bullet$ which lie in the difference $\bF_{\ell+1}X-\bF_{\ell}X$.
	
	\medskip
	
	\begin{center}
		\begin{tabular}	{|r|l|}
			\hline
			~ & {\bf Refine}$(X'_\bullet,Y'_\bullet,f)$ \\
			\hline
			~&{\bf Input:} Stratifications $X'_\bullet,Y'_\bullet$ of pure dimensional varieties $X$ and $Y$ \\ 
			& and a generic  morphism $f:X \to Y$ so that Prop. \ref{prop:x'y'strat} holds. \\
			~&{\bf Output:} Lists of subvarieties $X_\bullet \subset X$ and $Y_\bullet \subset Y$. \\
			\hline
			~1 & {\bf For each } $(S,R) \in \cP(X'_\bullet,Y'_\bullet)$ with $\dim R$ maximal \\
			~2 & \spc {\bf Set} $Y^+_\bullet := Y'_\bullet$ \\
			~3 & \spc {\bf Set} $d := \dim \overline{f(S)}$ \\
			~4 & \spc {\bf Add} $\overline{f(S)}$ to $Y^+_{\geq d}$ \\
			~5 &\spc {\bf Merge} $Y^+_\bullet$ with {\bf WhitStrat}$(\Pure_d(Y'_d))$\\
			~6 & \spc {\bf For each} $\ell = (d,d-1,\ldots,1,0)$ \\
			~7 & \spc \spc {\bf For each } irreducible $W \subset \overline{Y^+_\ell - Y'_\ell}$ and $S' \in \mathfrak{S}_\ell(X'_\bullet)$\\
			~8 &\spc \spc  \spc {\bf If } $Z \cap S' \neq \varnothing$ for an irreducible $Z \subset f^{-1}(W)$   \\
			~9 &  \spc  \spc  \spc \spc {\bf Set} $r := \dim Z$ \\
			10 &\spc  \spc  \spc\spc  {\bf Add} $Z$ to $X'_{\geq r}$\\
			11 &\spc  \spc  \spc\spc {\bf Merge} $X'_\bullet$ with {\bf WhitStrat}$({\rm Pure}_{r}(X'_{r}))$\\
			12 & \spc {\bf Set} $Y'_\bullet=Y^+_\bullet$\\
			13 & \spc {\bf Recompute} $\cP(X'_\bullet,Y'_\bullet)$\\
			14 & {\bf Return} $(X'_\bullet,Y'_\bullet)$ \\
			\hline
		\end{tabular}
	\end{center}
	
	\medskip
	
	This subroutine processes the problematic strata-pairs $(S,R)$ from $\cP(X'_\bullet,Y'_\bullet)$ one at a time, in descending order of $\dim R$. For each such pair, Lines 2-5 further partition $R$ by forcing the closure of the image $f(S)$ to form (one or more) new strata. As a result of subdividing $R$ along $f(S)$, some of the strata $S'$ of $X'_\bullet$ satisfying $f(S') \subset R$ no longer have their images contained in a single stratum. Lines 6-11 are designed to correct this problem by finding and further subdividing all such $S'$ appropriately. 
	
	\begin{proposition}
		The {\bf Refine} subroutine terminates, and its output $(X_\bullet,Y_\bullet)$ constitutes a valid stratification (as in Definition \ref{def:stratmap3}) of the generic projective morphism $f:X \to Y$.
	\end{proposition}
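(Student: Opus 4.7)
The plan is to establish the two assertions separately, with the invariant provided by Proposition \ref{prop:x'y'strat} playing the central role throughout. Both will be verified by tracking what happens to $\cP(X'_\bullet,Y'_\bullet)$ across a single iteration of the outer loop of Line 1.

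For termination, I would introduce the lexicographic measure $(\dim R,\dim S)$ on pairs $(S,R) \in \cP$ and show that each outer iteration strictly decreases the supremum of this measure. Because Line 1 processes only pairs with $\dim R$ maximal, it suffices to check that every pair newly inserted into $\cP$ by Lines 2--11 satisfies $\dim R'' < \dim R$. This in turn follows from two observations: first, the new $Y$-strata produced in Lines 2--5 all have dimension at most $d := \dim\overline{f(S)}$, and $(S,R) \in \cP$ forces $d \leq \dim S < \dim R$; second, Line 11 only subdivides $X'_\bullet$ over the newly-created lower-dimensional $W \subset \overline{Y^+_\ell - Y'_\ell}$. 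Since strata dimensions are bounded by $\dim X$ and $\dim Y$, a finite number of outer iterations suffices.

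For correctness, after termination I have $\cP(X_\bullet,Y_\bullet) = \varnothing$, so for every stratum $S$ of $X_\bullet$ the unique stratum $R$ of $Y_\bullet$ containing $f(S)$ satisfies $\dim S \geq \dim R$. Combined with Proposition \ref{prop:x'y'strat}, this gives $\mathrm{rank}\, J_xf|_S = \min(\dim S,\dim R) = \dim R$ at each $x \in S$, verifying clause (2) of Definition \ref{def:stratmap3}. Clause (1) is immediate from the construction of $\cP$ itself. I would then confirm that $X_\bullet$ and $Y_\bullet$ remain Whitney stratifications after every subdivision by appealing to Theorem \ref{thm:WSCorrect} for the {\bf WhitStrat} calls in Lines 5 and 11 and noting that the {\bf Merge} operation preserves Condition (B) on the successive differences, exactly as in the correctness proof of {\bf WhitStrat}.

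The main obstacle is showing that Proposition \ref{prop:x'y'strat} actually persists from one outer iteration to the next, since its proof relies crucially on $X'_\bullet$ being subordinate to the Thom-Boardman flag $\bT_\bullet$ of $f$ (so that $\mathrm{rank}\, J_xf$ is constant on each stratum, enabling the transversality and rank-nullity computation). After Lines 10--11, I must ensure that each new stratum $S'$ of $X'_\bullet$ still lies entirely within a single level $\bT_jX - \bT_{j-1}X$. The cleanest resolution is to observe that the subvarieties $Z$ added in Line 10 are irreducible components of pre-images $f^{-1}(W)$ for $W$ lying in a single level $\bB_jY-\bB_{j-1}Y$ of the image flag, and hence by Definition \ref{def:Bflag} lie within a single level of $\bT_\bullet$; alternatively, one may simply replace {\bf WhitStrat} by {\bf WhitStratFlag} with flag argument $\bT_\bullet$ in Line 11. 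With this point nailed down, the remaining arguments reduce to routine verifications analogous to those already carried out for {\bf WhitStrat} and for Proposition \ref{prop:x'y'strat}.
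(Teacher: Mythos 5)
Your overall strategy (process problematic pairs in descending order of $\dim R$, control the dimensions of newly created strata, and invoke Proposition \ref{prop:x'y'strat} for the rank statement) is in the same spirit as the paper's proof, but two of your key steps have genuine gaps. First, your termination measure is too strong: you claim that every pair newly inserted into $\cP$ satisfies $\dim R'' < \dim R$, so that the lexicographic supremum strictly drops each outer iteration. Your justification only accounts for pairs involving the \emph{new} $Y$-strata from Lines 2--5; it ignores pairs formed by a \emph{newly created} low-dimensional $X$-stratum together with a \emph{surviving} $Y$-stratum of dimension $\dim R$ (for instance $R$ minus the added subvarieties). The refinement in Lines 6--11 is not confined to $f^{-1}(W)$: the {\bf Merge} with {\bf WhitStrat}$({\rm Pure}_r(X'_r))$ in Line 11 can cut other strata of $X'_\bullet$, and the resulting lower-dimensional pieces may map into strata of dimension $\dim R$, producing problematic pairs at that same dimension; moreover, other pre-existing pairs at dimension $\dim R$ simply persist. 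The paper's proof is deliberately weaker on exactly this point: it only asserts that no new pair has $\dim R^* > \dim R$, and that at dimension $\dim R^* = \dim R$ no new pair can have $S^* \subset S$ (so the processed pair does not regenerate); it does not claim, and could not claim, that the supremum of $\dim R$ over $\cP$ strictly decreases in one pass.

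Second, your correctness argument applies Proposition \ref{prop:x'y'strat} to the \emph{final, refined} stratifications, whereas that proposition is proved only for the specific $X'_\bullet$ and $Y'_\bullet$ produced in Lines 7 and 10 of {\bf WhitStratMap} (its proof uses subordination to $\bF_\bullet$ and to the Thom--Boardman flag $\bT_\bullet$). You correctly identify this obstacle, but neither of your fixes closes it within the stated algorithm: replacing {\bf WhitStrat} by {\bf WhitStratFlag} with flag $\bT_\bullet$ changes the subroutine being analysed, and the claim that the added $Z \subset f^{-1}(W)$ forces all new strata into a single level of $\bT_\bullet$ does not cover the strata created by the {\bf WhitStrat} merges in Lines 5 and 11 (nor does it re-establish that each new $X$-stratum maps into a \emph{unique} $Y$-stratum, which is precisely what Lines 6--11 are designed to restore and what must be argued). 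The paper sidesteps this entirely: during the loop it deduces surjectivity of $J_xf:T_xS \to T_{f(x)}R_{i(x)}$ for the subdivided target strata $R_{i(x)} \subset R$ directly from the full-rank statement of Proposition \ref{prop:x'y'strat} applied to the \emph{original} pair $(S,R)$, using $\dim R_{i(x)} \leq \dim S$, rather than re-proving the proposition for the refined stratifications. To repair your write-up you would need either a correct termination measure in the spirit of the paper's (the maximal problematic $\dim R$ never increases and each pair at that dimension is eventually resolved), and a pairwise surjectivity argument as in the paper, or else a genuine proof that the invariants of Proposition \ref{prop:x'y'strat} are preserved under every {\bf Add}/{\bf Merge} performed by {\bf Refine}.
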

	\begin{proof}
		If $\cP$ is empty, then the algorithm terminates immediately with a correct stratification, so let $(S,R)$ be a strata-pair in $\cP$ with $R$ of maximal dimension; thus, we have $\dim S < \dim R$. In Lines 2-5, {\bf WhitStratMap} constructs a new Whitney stratification $Y^+_\bullet$ of $Y$ by subdividing the closure of $R$ into finitely many new strata
		\[
		\overline{R}= \coprod_i R_i
		\]	
		so that all points lying in the (closure of the) immersed image $f(S) \subset R$ lie within strata of dimension no larger than $\dim S$. Thus, for each point $x \in S$, there is a unique index $i(x)$ for which $f(x) \in R_{i(x)}$. Moreover, we have
		\[
		\dim R_{i(x)} \leq \dim S < \dim R.
		\]
		Since $R_{i(x)} \subset R$, we know by Proposition \ref{prop:x'y'strat} that the Jacobian $J_xf:T_xS \to T_{f(x)}R_{i(x)}$ is surjective for all $x \in S$ as desired. Unfortunately, the act of partitioning $R$ into smaller strata might violate the other property required by Definition \ref{def:stratmap3}, i.e., we may have strata $S'$ of $X'_\bullet$ whose images $f(S')$ were entirely contained in $R$, but which now intersect several new $R_i$'s. Therefore, in Lines 6-11, the algorithm paritions all such $S'$ along their intersections with $f^{-1}(R_i)$ for all $i$ --- this creates new Whitney strata and hence refines $X'_\bullet$. Finally, in Line 12 we also update $Y'_\bullet$ to the new stratification $Y^+_\bullet$. After this update, there might be several new problematic strata pairs in $\cP$; but the key observation here is that none of these new pairs $(S^*,R^*)$ can have $\dim R^* > \dim R$ since all of the new strata $R^*$ of $Y'_\bullet$ have dimension bounded above by $\dim R$. Moreover, even when $\dim R^* = \dim R$, it is impossible to have any $(S^*,R^*)$ in $\cP$ where $S^* \subset S$ is a newly-created stratum of $X'_\bullet$ --- any such $S'$ must have its image $f(S')$ entirely contained in a stratum of dimension $< \dim R$. Thus, {\bf WhitStratMap} eventually terminates and outputs a valid stratification of $f:X \to Y$.
	\end{proof}
	
	\begin{remark} An analogous algorithm for stratifying proper algebraic maps between affine varieties can be produced by making a few standard modifications to {\bf WhitStratMap}. Implicit in (the proofs of) Corollaries \ref{cor:affine_Whit_Strat} and \ref{cor:staratWrtFlagAffine} are routines which would compute (flag-subordinate) Whitney stratifications of a complex affine algebraic variety. These routines accept as input affine equations, homogenize the associated Gr\"obner basis, run the projective stratification algorithms described here, and then dehomogenize the output to produce the final result. Let {\bf WhitStratAff} and {\bf WhitStratFlagAff} be the resulting algorithms. If $X\subset \CC^n$ and $Y\subset \CC^m$ are affine varieties and $f:X\to Y$ is a proper morphism, then an analogous definition of the map being generic and of a Thom-Boardman flag on $X$ may be given.  To obtain a stratification of the generic proper morphism $f:X\to Y$ in the sense of Definition \ref{def:stratmap3}, one simply replaces each occurence of {\bf WhitStrat} and {\bf WhitStratFlag} by {\bf WhitStratAff} and {\bf WhitStratFlagAff}, respectively in the {\bf WhitStratMap} and {\bf Refine} algorithms described above.  
	\end{remark}
	
	\section{Performance and Complexity} \label{sec:runtimes}
	
	In this section, we briefly describe the real-life performance as well as the computational complexity of the basic algorithm {\bf WhitStrat} from \S\ref{sec:WSalgorithm}. Our implementation is in Macaulay2 \cite{M2}, and can be found along with documentation at the link below:	\begin{center}
		\url{http://martin-helmer.com/Software/WhitStrat}
	\end{center} 
	
	\subsection{Performance}

	As remarked in the Introduction, the state of the art for Whitney stratification algorithms appears to be the recent algorithm of {\DD}inh and Jelonek \cite[\S2]{dhinh2019thom}. We are not aware of any existing implementations of earlier algorithms based on quantifier elimination, and in any event, since implemented quantifier elimination methods rely on cylindrical algebraic decomposition we would expect their performance to be slower than that of  \cite{dhinh2019thom} overall. Since the authors of \cite{dhinh2019thom} did not provide an implementation of their algorithm, we have implemented it ourselves in Macaulay2. This implementation can be found at:
	\[  \text{\url{http://martin-helmer.com/Software/WhitStrat/DCG.m2}.}
	\] 
	 In Table \ref{table:runtimes} we show the run time of the {\bf WhitStrat} on several examples; on all of the examples listed here, the algorithm of \cite{dhinh2019thom} did not finish after 8 hours of calculation time. 
	
		\begin{table}[h!]\resizebox{.85\linewidth}{!}{
			\renewcommand{\arraystretch}{1.5}
			\begin{tabular}{@{} l *8c @{}}
				\toprule 
				\multicolumn{1}{l}{{\textbf{INPUT}}}    &\quad\quad {\bf Run time} \quad   \quad \\ 
				\midrule 
				$\bV\left(x_0x_1^2-x_1^2x_2\right) \subset \pp^3$ & 0.2s \\ 
				$\bV\left(	{x}_{1}^{4}{x}_{2}-{x}_{0}^{5}-{x}_{0}^{4}{x}_{3}-{x}_{0}^{4}{x}_{4}\right)\subset \pp^4$ & 0.6s \\
				$\bV\left({x}_{3}^{3}-{x}_{1}{x}_{2}^{2}-{x}_{0}^{2}{x}_{3}+{x}_{0}^{2}{x}_{4}-{x}_{3}{x}_{4}^{2}\right)\subset \pp^4$ & 0.9s \\
				$\bV\left({x}_{6}^{2}-{x}_{1}{x}_{2}+{x}_{0}{x}_{4},{x}_{0}^{2}-{x}_{0}{x}_{3}-{x}_{5}^{2}\right)\subset \pp^6$& 1.7s\\
				$\bV\left({x}_{0}^{2}{x}_{4}-{x}_{1}{x}_{2}^{2}+{x}_{3}^{3},{x}_{0}^{2}-{x}_{1}{x}_{4}\right)\subset \pp^4$&  2.0s\\
				$\bV\left({x}_{4}{x}_{7}-{x}_{1}{x}_{2}+{x}_{7}^{2},{x}_{0}^{2}-{x}_{0}{x}_{5}-{x}_{7}^{2},{x}_{3}{x}_{7}-{x}_{6}^{2}\right)\subset \pp^7$& 272.8s \\
				\bottomrule
		\end{tabular}} \caption{Run times of our {\bf WhitStrat} implementation in Macaulay2 when working over $\mathbb{Q}$ on an Intel Xeon E3-1245 v5 CPU with 32 GB of RAM. The algorithm of \cite{dhinh2019thom} did not finish after 8 hours of calculation time on these examples (we also tried the algorithm of \cite{dhinh2019thom} on a computer with a Intel i7-8700 CPU and 64 GB RAM, and again it did not finish within 8 hours). 
			\label{table:runtimes}}
	\end{table}
	
	The variety in the first entry of this table is (the projective analogue of) the Whitney umbrella, which was one of the earliest examples of singular spaces used to illustrate the need for Condition (B). We ran the algorithm of \cite{dhinh2019thom} on this variety for over 29 hours. In this time, it was unable to find strata of codimension $> 1$, i.e., the strata lying below the singular locus. All computations involving this algorithm used at least 27 GB of RAM during their 8 hour run. In sharp contrast, {\bf WhitStrat} used between 0.0005 and 0.347 GB of RAM, with all but the last entry in the table requiring no more than 0.0037 GB. While it is certainly possible that a more optimized implementation of \cite{dhinh2019thom} than ours could be produced, in our view it is unlikely to significantly alter these general trends.
	
	\subsection{Complexity}
	
	By far the biggest computational burden incurred when running {\bf WhitStrat} comes from various intermediate Gr\"{o}bner basis calculations. In particular, these are required when running the {\bf Decompose} subroutine in Line 3 (computing all primary components), and in Line 4 (computing elimination ideals), for more on how these tasks are accomplished via Gr\"{o}bner basis  see \cite{Cox,decker1999primary}. Throughout the remainder of this section, we write
	$\textbf{GB}(n,\delta)$ to denote the complexity of performing Gr\"obner basis computation on an ideal consisting of polynomials in $n$ variables of degree at most $\delta$, in the worst case this bound is approximately $\mathcal{O}(\delta^{2^n})$, as described for instance in \cite{mayr1982complexity,moller1984upper}. However, more recent analysis \cite{bardet2015complexity,faugere1999new} has revealed that this worst case bound tends to be overly pessimistic in general. 
	
	The bulk of our complexity estimates will be provided in terms of the following input parameters: we assume that the defining radical ideal $I_X$ of our input variety $X$ has been given in terms of a finite generating set of homogeneous polynomials in $n$ complex variables with maximal degree $\delta$. To avoid trivialities we assume $\delta\geq 2$ and $n\geq 2$. Let $k$ be the dimension of $X$, let $c = n-k$ be the codimension of $X$, and $\mu < k$ the dimension of the singular locus $X_\text{sing}$.

\begin{proposition}\label{prop:basecase}
    The degree of $X_\text{\rm sing}$ is no larger than $\delta^{n}$. If $Z \subset X_\text{\rm sing}$ is any irreducible $\mu$-dimensional subvariety, then the degree of the affine subvariety $V_\mu \subset \CC^n \times \CC^n$ defined by the ideal sum $I_{\Con(X)}+I_Z$ is bounded as
    \[
    \deg(V_\mu) \leq \delta^{3n^2}.
    \]
\end{proposition}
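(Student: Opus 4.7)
My plan is to derive both bounds from Bezout/Heintz-type degree estimates, which control the degree of an algebraic subset of $\CC^N$ by a power of the maximum defining-polynomial degree.

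For the first bound, recall from Section~\ref{ssec:computeCon} that $X_{\rm sing}=\bV(I_X+\textbf{Jac}_X)$, where the generators of $I_X$ have degree at most $\delta$ and the $c\times c$ minors of the Jacobian matrix of those generators have degree at most $c(\delta-1)\le n\delta$. A Bezout--Heintz bound in the $n$ affine variables, applied with the codimension of $X_{\rm sing}$ as exponent and combined with the elementary inequality $n\le\delta^{n-1}$ (valid for $n,\delta\ge 2$) to absorb polynomial-in-$n$ prefactors, yields $\deg X_{\rm sing}\le\delta^{n}$.

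For the second bound I would proceed in two steps. First, estimate $\deg\Con(X)$ using the defining ideal $(I_X+\textbf{Jac}_X^\xi):\textbf{Jac}_X^\infty$ from Section~\ref{ssec:computeCon}: since saturation only deletes components, it suffices to bound the pre-saturation scheme $\bV(I_X+\textbf{Jac}_X^\xi)\subset\CC^n\times\CC^n$. Its generators have total degree at most $c(\delta-1)+1\le n\delta$ in the combined $(x,\xi)$ variables, so Heintz's bound in $2n$ variables yields $\deg\Con(X)\le(n\delta)^{2n}$; absorbing the $n^{2n}$ factor via $n\le\delta^{n-1}$ refines this to $\deg\Con(X)\le\delta^{2n^2}$. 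Second, observe that $V_\mu$ is the scheme-theoretic intersection $\Con(X)\cap (Z\times\CC^n)$ because $I_Z$ is a pure-$x$ ideal, and that $\deg Z\le \deg X_{\rm sing}\le\delta^{n}$ by the first part, since any $\mu$-dimensional irreducible subvariety of $X_{\rm sing}$ must be a top-dimensional irreducible component. Multiplying these contributions via Bezout yields
\[
\deg V_\mu \;\le\;\deg\Con(X)\cdot\deg(Z\times\CC^n)\;\le\;\delta^{2n^2}\cdot\delta^{n}\;\le\;\delta^{3n^2},
\]
where the final inequality uses $2n^2+n\le 3n^2$ for $n\ge 2$.

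The main technical delicacy is controlling the saturation used to define $\Con(X)$: bounding generators of the saturated ideal directly would blow up dramatically, so the argument must exploit that saturation only deletes components and therefore cannot increase the total degree. Once this is granted, everything else is routine Bezout bookkeeping combined with the elementary estimate $n\le\delta^{n-1}$.
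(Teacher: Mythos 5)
The second assertion ($\deg V_\mu\le\delta^{3n^2}$) is handled correctly, and by a mildly different route than the paper: you bound $\deg\Con(X)$ through the pre-saturation scheme $\bV(I_X+\textbf{Jac}^\xi_X)$, correctly using that saturation only deletes components so that every component of $\Con(X)$ is a component of that scheme, and then apply the Bezout intersection inequality to $\Con(X)\cap(Z\times\CC^n)$; the paper instead bounds the bidegree $(\delta^n,1)$ of generators of $I_{\Con(X)}+I_Z$ and applies the complete-intersection/B\'ezout trick directly to $V_\mu$, obtaining $(\delta^n+1)^{2n}\le\delta^{3n^2}$. Your explicit handling of the saturation is, if anything, a cleaner justification than the paper's phrasing, and your final bound is even robust to weaker input: $\deg\Con(X)\le\delta^{2n^2}$ together with $\deg Z\le\delta^{n^2}$ would still give $\delta^{3n^2}$.

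The genuine gap is in the first assertion, $\deg X_{\rm sing}\le\delta^{n}$, which is part of the statement and is not established by your argument. You (honestly) bound the $c\times c$ Jacobian minors by $c(\delta-1)\le n\delta$ and then invoke a bound of the shape $\deg X_{\rm sing}\le (n\delta)^{e}$ with $e=\codim X_{\rm sing}$, claiming that $n\le\delta^{n-1}$ absorbs the $n$-prefactors. But $e\ge c+1\ge 2$, and $(n\delta)^e\le\delta^n$ would require $n^e\le\delta^{n-e}$, which fails in general (e.g.\ $\delta=2$, $n=4$, $e=2$); moreover the inequality $n\le\delta^{n-1}$ works in the wrong direction, converting each factor of $n$ into $\delta^{n-1}$, so this route only yields $(n\delta)^n\le\delta^{n^2}$, not $\delta^n$. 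The paper reaches the exponent $n$ by different bookkeeping: it writes $\deg X_{\rm sing}\le\deg X\cdot(\text{degree of the cutting equations})^{k}$ with $\deg X\le\delta^{n-k}$, and asserts that the minors cutting $X_{\rm sing}$ out of $X$ have degree at most $\delta-1$, so the product is $\delta^{n-k}\cdot(\delta-1)^k\le\delta^n$. With your bound $c(\delta-1)$ for the minor degrees the target $\delta^n$ is unreachable by this style of estimate, so to prove the first claim you need either the sharper degree claim for the equations cutting $X_{\rm sing}$ inside $X$ (as in the paper) or a genuinely different argument; as written, your proof of the first bound does not go through.
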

\begin{proof}
We begin with the observation that $\deg(X_\text{sing})$ is bounded from above by the degree of any $\mu$-dimensional complete intersection that contains $X_\text{sing}$ --- such complete intersections may be generated as zero sets of an appropriate number of random linear combinations of the defining polynomials of $X_\text{sing}$ (see \cite[Example 8.4.12]{fulton2013intersection} or \cite[\S A.9]{wampler2005numerical}), after choosing a set of generators all having the same degree (which can always be done for homogeneous ideals \cite[Remark~2.3]{HH19}). Recall that the Jacobian minors which generate $X_\text{sing}$ in the coordinate ring of $X$ have degree at most $\delta-1$. By B\'ezout's theorem \cite[Theorem~12.3]{fulton2013intersection} and the discussion above, we have
\[
\deg(X_\text{sing}) \leq \deg(X) \cdot (\delta-1)^{k},
\]
since the singular locus can have codimension at most $k$ in $X$. Again using Bezout's theorem, the expression on the right may be bounded above by $\delta^{n-k} \cdot \delta^k$, which equals $\delta^{n}$ as desired. Turning now to $V_\mu$, note that the minors of the augmented Jacobian which generate $\Con(X)$ have bidegree no larger than $((\delta-1)^{n-k},1)$ inside $\PP^n \times (\PP^n)^*$, while those which generate $Z$ have degree no larger than $\delta^{n}$ inside $\PP^n$ by our bound on $\deg(X_\text{sing})$ given above. Thus, $(\delta^{n},1)$ bounds from above the  bidegree for the generating polynomials of $V_\mu$. Passing to affine charts, $V_\mu$ may be generated by polynomials of degree at most $\delta^{n}+1$ as a subvariety of $\CC^{2n}$. Appealing once again to a complete intersection which contains $V_\mu$ and has the same dimension, we obtain the following degree bound by B\'ezout's theorem: 
\[
\deg(V_\mu) \leq (\delta^{n}+1)^{\codim V_\mu} <  (\delta^{n}+1)^{2n}\leq (2\delta^{n})^{2n}\leq (\delta^{n+1})^{2n}. 
\]
Since the right side is bounded from above by $\delta^{3n^2}$, the argument is complete.
\end{proof}

The degree of $V_\mu$ estimated above bounds several quantities of interest from above when {\bf Decompose} is first invoked from {\bf WhitStrat}. In {\bf Decompose}, we loop over all primary components of an ideal. Note that the desired quantities, i.e.~degrees and number of generators of the resulting ideals, are bounded by an expression which is polynomial in $\delta^{2^n}$. In Proposition \ref{prop:decompcost} below we are led to consider various ideals in $\CC[x_1,\dots, x_n]$ and $\CC[x_1, \dots, x_n,\xi_1,\dots, \xi_n]$. For an arbitrary  polynomial ideal $J$  with generators of degree at most $d$ in $N$   variables (we will have either $N=n$ or $N=2n$) we therefore bound both the degree and number of primary components of $J$ by $d^{\ell 2^N}$, for some fixed positive integer $\ell$ that is independent of the input. An analysis of the primary decomposition algorithm of \cite{gianni1988grobner}, see also \cite{decker1999primary}, yields a (not sharp) upper bound $\ell \leq 36N^2$.

\begin{proposition}\label{prop:decompcost}
    The following quantities are bounded by $$\delta^{(3\ell)^\mu n^{2\mu}2^{n\mu}}$$ when {\bf Decompose} is called from {\bf WhitStrat} during the any step of the algorithm:
    \begin{enumerate}
        \item the number of primary components of $J$;
        \item the degrees of the generating polynomials of $J$ and $K$.
    \end{enumerate}
\end{proposition}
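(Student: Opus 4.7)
The plan is to argue by induction on the recursion depth $j$ within {\bf WhitStrat}, showing that at depth $j$ all relevant quantities are bounded by $\delta^{(3\ell)^{j} n^{2j} 2^{nj}}$, which is at most $\delta^{(3\ell)^{\mu} n^{2\mu} 2^{n\mu}}$ because the recursion depth is bounded above by $\mu$: each recursive call in Line 7 of {\bf WhitStrat} operates on a variety of dimension strictly smaller than the ambient singular locus of the previous level, and this chain cannot have more than $\mu$ links.

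For the base case $j=1$, Proposition~\ref{prop:basecase} gives that $J = I_{\Con(X)} + I_Y$ has associated affine variety $V_\mu$ of degree at most $\delta^{3n^2}$. Using the complete-intersection reduction employed in the proof of Proposition~\ref{prop:basecase} (via suitable random linear combinations of the homogeneous generators), one obtains a generating set for $J$ of uniform degree at most $\delta^{3n^2}$. The primary decomposition complexity bound $d^{\ell 2^{N}}$ then controls both the number of primary components and the degrees of the generators of each $Q$, and consequently of $K = Q \cap \CC[x]$, by $\delta^{3\ell n^2 2^n}$, matching the claimed bound at $j=1$.

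For the inductive step, suppose the bound $D_j := \delta^{(3\ell)^j n^{2j} 2^{nj}}$ holds at depth $j$. At depth $j+1$, the input to {\bf Decompose} is a variety $X^{(j+1)}$ whose defining ideal arises as a primary component or elimination ideal from the previous level, so its generators have degree at most $D_j$ by the inductive hypothesis. Repeating the argument of Proposition~\ref{prop:basecase} with $D_j$ in place of $\delta$, the affine variety associated with $I_{\Con(X^{(j+1)})} + I_{Y^{(j+1)}}$ has degree bounded by $D_j^{3n^2}$. A further primary decomposition then produces components satisfying $D_j^{3 \ell n^2 2^n} = \delta^{3 \ell n^2 2^n \cdot (3\ell)^j n^{2j} 2^{nj}} = \delta^{(3\ell)^{j+1} n^{2(j+1)} 2^{n(j+1)}}$, closing the induction.

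The main obstacle is the careful passage from variety degree to generator degree at each inductive step: Proposition~\ref{prop:basecase} bounds $\deg(V_\mu)$, not the degrees of a set of generators of the associated ideal, so one must invoke a linear-combination argument at each recursion level to replace the generating set with one of uniform degree bounded by the variety degree. A secondary subtlety lies in reconciling the $2n$-variable nature of the primary decomposition of $J$ with the $2^{n\mu}$ (rather than $2^{2n\mu}$) appearing in the statement; this is resolved by observing that $K = Q \cap \CC[x]$ is what propagates to subsequent recursive calls, so only the $\CC^n$-ambient complexity contributes to the ultimate bound on degrees of input ideals at the next recursion, with the constant $\ell$ absorbing residual factors arising during the elimination step.
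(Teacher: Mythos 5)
Your induction on recursion depth is essentially the paper's own argument: it starts from the bound $\deg(V_\mu)\le\delta^{3n^2}$ of Proposition~\ref{prop:basecase}, applies the stated convention that a primary decomposition raises degrees and component counts to at most the power $\ell 2^{n}$, and compounds this over the at most $\mu$ calls to \textbf{Decompose}, yielding $\delta^{(3\ell n^2 2^n)^{\mu}}$ exactly as in the paper. The two ``obstacles'' you flag -- passing from variety degree to generator degree, and the $2^{n}$ versus $2^{2n}$ exponent for the $2n$-variable ideal $J$ -- are treated no more carefully in the paper itself, which reuses the complete-intersection reduction from Proposition~\ref{prop:basecase}, applies the $d^{\ell 2^{n}}$ bound directly, and notes only that elimination does not increase degree.
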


 \begin{proof}
 We have from Proposition 8.1 that $\deg(V_\mu)$ is bounded by $\delta^{3n^2}$. For all $d$ in $\set{\mu-1,\ldots,0}$ we similarly let $V_d$ be the degree of the ideal $J=I_{\Con(X)}+I_Y$ in Line 2 of {\bf Decompose} when it is called during the $(\mu-d+1)$-st iteration of the {\bf For} loop of {\bf WhitStrat}. Thus, when $d=\mu-1$, the variety represented by $X$ in Line 3 of Decompose is $\mu$-dimensional and lives in the singular locus of the original input variety, whereas $Z \subset X$ has dimension at most $\mu-1$.

Starting from $Y=X_{\rm Sing}$ we may call {\bf Decompose} upto $\mu$ times. In the first instance we run line with an input of degree at most $\delta^{3n^2}$, this will yield up to $$(\delta^{3n^2})^{\ell 2^n}=\delta^{3\ell n^22^n}$$ primary components, whose degree is bounded by this same number and this number also bounds the degree of $K$ since the ideal elimination cannot increase degree. Each iteration will increase the bound on the degree and number of components by exponentiating to ${3n^22^n}$ the bounds from the last iteration, this can happen to $\delta^{3\ell n^22^n}$ at most $\mu-1$ additional times, so we obtain $$\delta^{(3\ell n^22^n)^{\mu}}.$$

 A very similar argument shows that this quantity also bounds the degrees of varieties arising in {\bf Decompose} from the recursive calls in Line 7 of {\bf WhitStrat}. \qedhere
 \end{proof}

In order to fully describe {\bf WhitStrat}'s complexity, we require the following additional {\em runtime parameters}. Assume, for each $d \in \set{\mu,\mu-1,\ldots,1,0}$, that $r_d \geq 0$ is the largest number of generators encountered among the ideals $J,K$, and $\sqrt{K}$ (which is the ideal of $\bV(K)$) in Lines 3-5 of {\bf Decompose} when extracting the $d$-dimensional strata of $X$ (which are then used in subsequent conormal variety calculations). We have introduced these secondary $r_d$ parameters because we are unaware of any sufficiently general and reasonably tight bounds on the number of generators of $J,K$, and $\sqrt{K}$  in terms of the primal quantities $n$, $\delta$ and the number of generating polynomials for the input variety $X$. Any such bound must account for pathological cases  and hence be at least doubly exponential in $n$, see for instance \cite{caviglia2021bounds}. Such estimates do not accurately reflect the vast majority of inputs. 

If a given ideal $I_Z$ has $r$ generating polynomials in $n$ variables, then the number of minors in the generating ideal of $\Con(Z)$ equals
\[
\binom{r+1}{n-\dim(Z)}\cdot \binom{n+1}{n-\dim(Z)} \leq (16 r n )^{n-\dim(Z)}<(16 r n )^n.
\] We will assume henceforth that $(16 r_d n )^n$ is in $\mathcal{O}(\delta^{(3\ell)^\mu n^{2\mu}2^{n\mu}})$ for all $d$, so that the cost of computing the required minors to obtain the ideals of conormal varieties is dominated by the cost of subsequent Gr\"obner basis calculations. With this assumption in place, the estimates in Proposition \ref{prop:decompcost} give us the desired complexity bounds. 

\begin{theorem}\label{cor:total_Complexity}
Let $X$ be a complex projective subvariety of $\pp^n$ whose defining polynomials have degree at most $\delta$ and whose singular locus has dimension $\mu$. The time complexity of running {\bf WhitStrat} on $X$ is bounded in
\[
    \mathcal{O}\left((\mu+2)^2\cdot (D+2)\cdot \text{\bf GB}(n,D)\right),
\] where $D = \delta^{(3\ell)^\mu n^{2\mu}2^{n\mu}}$ and $\text{\bf GB}(n,D)$ is the cost of computing Gr\"obner bases for an ideal generated by polynomials in $n$ variables of degree at most $D$.
\end{theorem}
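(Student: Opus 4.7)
The plan is to decompose the runtime into three independent factors: the asymptotic cost of a single Gr\"obner basis computation, the number of such computations within one invocation of {\bf Decompose}, and the total number of {\bf Decompose} invocations over the entire (recursively unfolded) execution of {\bf WhitStrat}. Every polynomial manipulated by the algorithm lives either in the coordinate ring of $\pp^n$ or of $\pp^n\times(\pp^n)^*$, hence in $\mathcal{O}(n)$ variables, and Proposition~\ref{prop:decompcost} bounds the maximum generator degree by $D$; so each Gr\"obner basis call costs $\text{\bf GB}(n,D)$. The standing assumption that $(16 r_d n)^n\in\mathcal{O}(D)$ folds the cost of assembling the Jacobian and augmented-Jacobian minors used to produce the conormal ideals (as in Section~\ref{ssec:computeCon}) into this per-call budget.

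Next, I would bound the work inside a single {\bf Decompose} call. Line~2 is a plain ideal sum and adds no Gr\"obner basis cost, while line~3 performs a primary decomposition which yields at most $D$ primary components by Proposition~\ref{prop:decompcost}. For each resulting $Q$, line~4 requires a single elimination of the $\xi$-coordinates to produce $K = Q \cap \CC[x]$. Counting the primary decomposition plus the up to $D$ subsequent eliminations gives at most $D+1$ Gr\"obner basis calls, so the per-invocation cost is $\mathcal{O}\bigl((D+2)\cdot\text{\bf GB}(n,D)\bigr)$.

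Finally, I would tally the total number of {\bf Decompose} calls generated by {\bf WhitStrat}. The outer {\bf For} loop on line~5 contributes $\mu+1$ iterations, each triggering one call to {\bf Decompose} at line~6, while the recursion depth is bounded by $\mu+1$ because each recursive call on line~7 operates on a variety of strictly smaller dimension than its parent input. Indexing each {\bf Decompose} invocation by the pair (recursion depth, loop index) yields at most $(\mu+2)^2$ invocations in total. Multiplying against the per-call bound from the preceding paragraph produces the claimed complexity.

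The main obstacle to making this rigorous is the third step. The naive recursion tree is potentially broad, and to obtain the polynomial factor $(\mu+2)^2$ rather than something exponential in $\mu$, one must argue that the recursive invocations genuinely exhaust strictly lower-dimensional inputs without cascading duplication of work. I would formalize this by showing that the depth-indexed profile of active {\bf Decompose} calls at each recursion level is uniformly bounded by the level's maximal loop size, so that the total number of realized (depth, loop-index) pairs telescopes to $(\mu+2)^2$, after which the per-pair cost multiplies through to give the asymptotic.
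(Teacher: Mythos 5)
Your overall decomposition --- per--Gr\"obner-basis cost $\text{\bf GB}(n,D)$ with degrees controlled by Proposition \ref{prop:decompcost}, roughly $D+2$ such computations per {\bf Decompose} call, and roughly $(\mu+2)^2$ calls to {\bf Decompose} in total --- is exactly the paper's argument. Two points of divergence are worth flagging. First, your claim that Line 2 of {\bf Decompose} is ``a plain ideal sum and adds no Gr\"obner basis cost'' is not quite right: forming $J=I_{\Con(X)}+I_Y$ requires the conormal ideal itself, and as in Section \ref{ssec:computeCon} this is obtained by a saturation $(I_Z+\textbf{Jac}^\xi_Z):(\textbf{Jac}_Z)^\infty$, which is a Gr\"obner basis computation; the paper counts precisely this at Line 2 (the standing assumption on $(16r_dn)^n$ only absorbs the cost of listing the minors, not the saturation). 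This does not change the asymptotics, since your per-call total still lands at $\mathcal{O}((D+2)\cdot\text{\bf GB}(n,D))$, but the bookkeeping should be corrected.

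Second, the step you yourself identify as the main obstacle --- bounding the number of {\bf Decompose} invocations by $(\mu+2)^2$ --- is left as a sketch, and your proposed indexing by (recursion depth, loop index) is not injective on its face: at depth two and beyond, distinct recursive branches of {\bf WhitStrat} can produce invocations sharing the same pair, so some additional argument is needed to rule out cascading duplication. The paper handles this by a direct triangular count: since each recursive call in Line 7 is made on a variety of strictly smaller dimension, whose singular locus (and hence loop length) drops accordingly, the total number of {\bf Decompose} calls is bounded by $(\mu+1)+\mu+\cdots+1=\frac{(\mu+1)(\mu+2)}{2}\leq(\mu+2)^2$. If you want to make your version rigorous, the cleanest route is to follow that dimension-drop counting (the loop length available at nesting level $j$ is at most $\mu+1-j$) rather than trying to prove a per-level profile bound for the branching tree; as written, your third step is the one genuine gap in the proposal.
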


\begin{proof}

 Taking into account the recursive call in Line 7 of {\bf WhitStrat}, the subroutine {\bf Decompose} can be called at most $(\mu+1)+(\mu)+\dots + 1=\frac{(\mu+1)(\mu+2)}{2}$ times. Each call results in a Gr\"obner basis computation at Line 2 to find $I_{\Con}(X)$ and the computation of a primary decomposition prior to the loop executing on Line 3; the computational complexity of this primary decomposition is polynomial in the cost of Gr\"obner basis computation as discussed above.  Each iteration of the {\bf For} loop in Line 3 of {\bf Decompose} runs over at most $D$ primary components and computes an elimination ideal in Line 4 requiring one Gr\"obner basis computation; the computation of the dimension at Line 5 can be done using linear algebra operations using the output of Line 4 and hence has complexity bounded by the operation at Line 4.  Hence we have at most $D+2$ operations each have complexity in at most the order of $\text{\bf GB}(n,D)$. Bounding $\frac{(\mu+1)(\mu+2)}{2}$ by $(\mu+2)^2$ gives the claimed result. 
 \end{proof}

The practical efficiency of {\bf WhitStrat} comes from the fact that the parameter $D$ from the above theorem lies in $\mathcal{O}(\deg(X))$ for typical input varieties $X$, rather than scaling super-exponentially with $\delta$. For the purposes of describing theoretical complexity, we are forced to increment the power of $n$ in the exponent each time the {\bf For} loop of {\bf WhitStrat} iterates, as this is the only way to guarantee that the worst-case degree bounds hold. In practice, however, we have observed that the degrees of the intermediate varieties stay roughly constant and even reduce as the {\bf For} loop iterates from $d = \mu$ down to $d = 0$.

\subsection{Comparison with Quantifier-Elimination based Stratification Methods}\label{subsec:comparison_theory}
	
	We are aware of one other approach for computing Whitney stratifications which does not use Gr\"obner basis calculation, namely the algorithm of \cite{rannou1991complexity,rannou1998complexity}. This method has complexity bounds derived from the asymptotically fast critical points based quantifier elimination methods of Grigoriev-Vorobjov \cite{grigor1988solving} and Renegar \cite{renegar1992computational}. Although these critical point methods were first introduced over three decades ago and have been extensively studied thereafter, to the best of our knowledge there is no software implementation for general polynomial systems. All general purpose quantifier-elimination software we are aware of uses some form of cylindrical algebraic decomposition, \cite{arnon1984cylindrical,Basu2006Algorithms,caviness2012quantifier}, which has complexity bounds doubly exponential in the number of variables.  Quantifier-elimination algorithms based on critical points have been described in \cite[Chapter 14]{Basu2006Algorithms}; on page 7 of this text, the authors state that:
	\begin{quote}
	    {\em ``For most of the algorithms presented in Chapter 13 to 16 there is no implementation at all. The reason for that is that the methods developed are well adapted to complexity results but are not adapted to efficient implementation."}
	\end{quote}
	
    Nevertheless, had an implementation of the critical points based quantifier-elimination  methods of Grigoriev-Vorobjov \cite{grigor1988solving} and Renegar \cite{renegar1992computational} existed, then an implementation of the stratification algorithm of  \cite{rannou1991complexity,rannou1998complexity} based on this would achieve a complexity bound of the form
    \[
    \delta^{\mathcal{O}(n)^{6 \omega}},
    \] where $\omega$ is the length of the canonical (i.e., coarsest) Whitney stratification of the input variety, see, for example, \cite[page~289]{rannou1991complexity}. For high dimensional inputs which admit strata in all dimensions, this bound is asymptotically quite similar to those from Gr\"{o}bner basis based methods. However, it has been suggested that the $\mathcal{O}(n)$ in the exponent conceals a rather large constant that is likely to make these quantifier-elimination methods infeasible in practice \cite[Remark 2.28]{basu2014algorithms}.

	\bibliographystyle{abbrv}
	\bibliography{library}

\begin{thebibliography}{10}

\bibitem{arnon1984cylindrical}
D.~S. Arnon, G.~E. Collins, and S.~McCallum.
\newblock Cylindrical algebraic decomposition {I}: The basic algorithm.
\newblock {\em SIAM Journal on Computing}, 13(4):865--877, 1984.

\bibitem{bardet2015complexity}
M.~Bardet, J.-C. Faug{\`e}re, and B.~Salvy.
\newblock On the complexity of the {F5} {G}r{\"o}bner basis algorithm.
\newblock {\em Journal of Symbolic Computation}, 70:49--70, 2015.

\bibitem{basu2014algorithms}
S.~Basu.
\newblock Algorithms in real algebraic geometry: a survey.
\newblock {\em Panoramas \& Synth\'{e}ses}, 51:107–153, 2017.

\bibitem{Basu2006Algorithms}
S.~Basu, R.~Pollack, and M.-F. Roy.
\newblock {\em Algorithms in Real Algebraic Geometry}, volume~10 of {\em
  Algorithms and Computation in Mathematics}.
\newblock Springer, 2006.

\bibitem{boardman}
J.~M. Boardman.
\newblock Singularities of differentiable maps.
\newblock {\em Publ. Math. IHES}, 33:383--419, 1967.

\bibitem{brasseletSeadeSuwa}
J.-P. Brasselet, J.~Seade, and T.~Suwa.
\newblock {\em Vector fields on singular varieties}, volume 1987.
\newblock Springer Science \& Business Media, 2009.

\bibitem{Cappell1991stratifiable}
S.~Cappell and J.~Shaneson.
\newblock Stratifiable maps and topological invariants.
\newblock {\em J. Amer. Math. Soc.}, 4(3):521--551, 1991.

\bibitem{caviglia2021bounds}
G.~Caviglia and A.~De~Stefani.
\newblock Bounds on the number of generators of prime ideals.
\newblock {\em arXiv preprint arXiv:2108.05683}, 2021.

\bibitem{caviness2012quantifier}
B.~F. Caviness and J.~R. Johnson.
\newblock {\em Quantifier elimination and cylindrical algebraic decomposition}.
\newblock Springer Science \& Business Media, 2012.

\bibitem{CLO}
D.~Cox, J.~Little, and D.~OShea.
\newblock {\em Ideals, varieties, and algorithms: an introduction to
  computational algebraic geometry and commutative algebra}.
\newblock Springer Science \& Business Media, 2015.

\bibitem{Cox}
D.~A. Cox.
\newblock {What is a toric variety?}

\bibitem{decker1999primary}
W.~Decker, G.-M. Greuel, and G.~Pfister.
\newblock Primary decomposition: algorithms and comparisons.
\newblock In {\em Algorithmic algebra and number theory}, pages 187--220.
  Springer, 1999.

\bibitem{draisma2016euclidean}
J.~Draisma, E.~Horobe{\c{t}}, G.~Ottaviani, B.~Sturmfels, and R.~R. Thomas.
\newblock The {E}uclidean distance degree of an algebraic variety.
\newblock {\em Foundations of Computational Mathematics}, 16(1):99--149, 2016.

\bibitem{england2016experience}
M.~England and J.~H. Davenport.
\newblock Experience with heuristics, benchmarks \& standards for cylindrical
  algebraic decomposition.
\newblock In {\em CEUR Workshop Proceedings}, volume 1804, pages 24--31.
  CEUR-WS, 2016.

\bibitem{faugere1999new}
J.-C. Faugere.
\newblock A new efficient algorithm for computing {G} r{\"o}bner bases ({F4}).
\newblock {\em Journal of pure and applied algebra}, 139(1-3):61--88, 1999.

\bibitem{FTpolar}
A.~G. Flores and B.~Teissier.
\newblock Local polar varieties in the geometric study of singularities.
\newblock {\em Annales de la Facult\'e des sciences de Toulouse :
  Math\'ematiques}, Ser. 6, 27(4):679--775, 2018.

\bibitem{fulton2013intersection}
W.~Fulton.
\newblock {\em Intersection theory}.
\newblock Springer-Verlag New York, second edition, 1998.

\bibitem{gianni1988grobner}
P.~Gianni, B.~Trager, and G.~Zacharias.
\newblock Gr{\"o}bner bases and primary decomposition of polynomial ideals.
\newblock {\em Journal of Symbolic Computation}, 6(2-3):149--167, 1988.

\bibitem{Ginsburg1986}
V.~Ginsburg.
\newblock {Characteristic varieties and vanishing cycles}.
\newblock {\em Inventiones Mathematicae}, 84(2):327--402, 1986.

\bibitem{golubitsky}
M.~Golubitsky and V.~Guillemin.
\newblock {\em Stable Mappings and their Singularities}.
\newblock Number~14 in Graduate Texts in Mathematics. Springer, 1973.

\bibitem{gonzalez1981obstruction}
G.~Gonz{\'a}lez-Sprinberg.
\newblock L'obstruction locale d'{E}uler et le th{\'e}oreme de {M}ac{P}herson.
\newblock {\em Ast{\'e}risque}, 82(83):7--32, 1981.

\bibitem{IH}
M.~Goresky and R.~MacPherson.
\newblock Intersection homology theory.
\newblock {\em Topology}, 19:135--162, 1980.

\bibitem{IH2}
M.~Goresky and R.~MacPherson.
\newblock Intersection homology {II}.
\newblock {\em Inventiones Mathematicae}, 71:77--129, 1983.

\bibitem{SMTbook}
M.~Goresky and R.~MacPherson.
\newblock {\em Stratified Morse Theory}.
\newblock Springer-Verlag, 1988.

\bibitem{M2}
D.~R. Grayson and M.~E. Stillman.
\newblock {Macaulay2, a software system for research in algebraic geometry}.
\newblock Available at \url{http://www.math.uiuc.edu/Macaulay2}.

\bibitem{grigor1988solving}
D.~Y. Grigor'ev and N.~N. Vorobjov~Jr.
\newblock Solving systems of polynomial inequalities in subexponential time.
\newblock {\em Journal of symbolic computation}, 5(1-2):37--64, 1988.

\bibitem{HH19}
C.~Harris and M.~Helmer.
\newblock Segre class computation and practical applications.
\newblock {\em Mathematics of Computation}, 89(321):465--491, 2020.

\bibitem{hartshorne}
R.~Hartshorne.
\newblock {\em Algebraic Geometry}.
\newblock Number~52 in Graduate Texts in Mathematics. Springer, 1997.

\bibitem{hn}
M.~Helmer and V.~Nanda.
\newblock Conormal spaces and {W}hitney stratifications.
\newblock {\em Foundations of Computational Mathematics, {\em DoI:
  \url{10.1007/s10208-022-09574-8}}}, 2022.

\bibitem{holme1988geometric}
A.~Holme.
\newblock The geometric and numerical properties of duality in projective
  algebraic geometry.
\newblock {\em Manuscripta mathematica}, 61(2):145--162, 1988.

\bibitem{dhinh2019thom}
S.~T. {\DH}inh and Z.~Jelonek.
\newblock Thom isotopy theorem for nonproper maps and computation of sets of
  stratified generalized critical values.
\newblock {\em Discrete \& Computational Geometry}, 65:279--304, 2021.

\bibitem{le1988limites}
D.~L{\^e}~Tr{\'a}ng and B.~Teissier.
\newblock Limites d’espaces tangents en g{\'e}om{\'e}trie analytique.
\newblock {\em Commentarii Mathematici Helvetici}, 63(1):540--578, 1988.

\bibitem{MacPherson1974}
R.~MacPherson.
\newblock {Chern classes for singular algebraic varieties}.
\newblock {\em The Annals of Mathematics}, 100(2):423--432, 1974.

\bibitem{Mather2012}
J.~Mather.
\newblock {Notes on topological stability}.
\newblock {\em Bulletin of the American Mathematical Society}, 49(4):475--506,
  2012.

\bibitem{mayr1982complexity}
E.~W. Mayr and A.~R. Meyer.
\newblock The complexity of the word problems for commutative semigroups and
  polynomial ideals.
\newblock {\em Advances in Mathematics}, 46(3):305--329, 1982.

\bibitem{michalek2021invitation}
M.~Micha{\l}ek and B.~Sturmfels.
\newblock {\em Invitation to nonlinear algebra}, volume 211.
\newblock American Mathematical Soc., 2021.

\bibitem{moller1984upper}
H.~M. M{\"o}ller and F.~Mora.
\newblock Upper and lower bounds for the degree of {G}r{\"o}bner bases.
\newblock In {\em International Symposium on Symbolic and Algebraic
  Manipulation}, pages 172--183. Springer, 1984.

\bibitem{mostowski1991complexity}
T.~Mostowski and E.~Rannou.
\newblock Complexity of the computation of the canonical {W}hitney
  stratification of an algebraic set in $\mathbb{C}^n$.
\newblock In {\em International Symposium on Applied Algebra, Algebraic
  Algorithms, and Error-Correcting Codes}, pages 281--291. Springer, 1991.

\bibitem{rannou1991complexity}
T.~Mostowski and E.~Rannou.
\newblock Complexity of the computation of the canonical whitney stratification
  of an algebraic set in $\mathbb{C}^n$.
\newblock In H.~F. Mattson, T.~Mora, and T.~R.~N. Rao, editors, {\em Applied
  Algebra, Algebraic Algorithms and Error-Correcting Codes}, pages 281--291,
  Berlin, Heidelberg, 1991. Springer Berlin Heidelberg.

\bibitem{MumfordOda}
D.~Mumford and T.~Oda.
\newblock {\em Algebraic Geometry {II}}.
\newblock Hindustan Book Agency New Delhi, 2015.

\bibitem{rannou1998complexity}
E.~Rannou.
\newblock The complexity of stratification computation.
\newblock {\em Discrete \& Computational Geometry}, 19(1):47--78, 1998.

\bibitem{renegar1992computational}
J.~Renegar.
\newblock On the computational complexity and geometry of the first-order
  theory of the reals. part {I} : Introduction. preliminaries. the geometry of
  semi-algebraic sets. the decision problem for the existential theory of the
  reals.
\newblock {\em Journal of symbolic computation}, 13(3):255--299, 1992.

\bibitem{rodriguez2017computing}
J.~I. Rodriguez and B.~Wang.
\newblock Computing {E}uler obstruction functions using maximum likelihood
  degrees.
\newblock {\em International Mathematics Research Notices}, 2019.

\bibitem{thom}
R.~Thom.
\newblock Les singularit\'{e}s des applications diff\'{e}rentiables.
\newblock {\em Ann. Inst. Fourier (Grenoble)}, 6:43--87, 1955/6.

\bibitem{thom2}
R.~Thom.
\newblock Ensembles et morphismes stratifi\'es.
\newblock {\em Bull. Amer. Math. Soc.}, 75(2):240--284, March 1969.

\bibitem{vasconcelos2004computational}
W.~Vasconcelos.
\newblock {\em Computational methods in commutative algebra and algebraic
  geometry}, volume~2.
\newblock Springer Science \& Business Media, 2004.

\bibitem{vasconcelos2006integral}
W.~Vasconcelos.
\newblock {\em Integral closure: Rees algebras, multiplicities, algorithms}.
\newblock Springer Science \& Business Media, 2006.

\bibitem{vorobjov1992effective}
N.~N. Vorobjov.
\newblock Effective stratification of regular real algebraic varieties.
\newblock In {\em Real Algebraic Geometry}, pages 402--415. Springer, 1992.

\bibitem{wampler2005numerical}
I.~C.~W. Wampler et~al.
\newblock {\em The Numerical solution of systems of polynomials arising in
  engineering and science}.
\newblock World Scientific, 2005.

\bibitem{weinberger}
S.~Weinberger.
\newblock {\em The Topological Classification of Stratified Spaces}.
\newblock The University of Chicago Press, 1994.

\bibitem{whitney1965tangents}
H.~Whitney.
\newblock Tangents to an analytic variety.
\newblock {\em Annals of Mathematics}, pages 496--549, 1965.

\bibitem{wilson2012speeding}
D.~J. Wilson, R.~J. Bradford, and J.~H. Davenport.
\newblock Speeding up cylindrical algebraic decomposition by {G}r{\"o}bner
  bases.
\newblock In {\em International Conference on Intelligent Computer
  Mathematics}, pages 280--294. Springer, 2012.

\end{thebibliography}
\end{document}